\numberwithin{equation}{section}
\mathchardef\mhyphen="2D
\theoremstyle{plain}
\newtheorem{theorem}{Theorem}[section]
\newtheorem*{theorem*}{Theorem}
\newtheorem{lemma}[theorem]{Lemma}
\newtheorem{proposition}[theorem]{Proposition}
\newtheorem{hypothesis}[theorem]{Hypothesis}
\newtheorem{corollary}[theorem]{Corollary}
\newtheorem{conjecture}[theorem]{Conjecture}
\newtheorem{notation}[theorem]{Notation}
\theoremstyle{definition}
\newtheorem{definition}[theorem]{Definition}
\newtheorem{example}[theorem]{Example}
\newtheorem{remark}[theorem]{Remark}
\newtheorem{question}[theorem]{Question}
\let\c@equation\c@theorem  
\newcommand{\bfl}{\mathfrak l}
\DeclareMathOperator{\ged}{ged}
\DeclareMathOperator{\gldim}{gldim}
\DeclareMathOperator{\Ext}{Ext} 
\DeclareMathOperator{\Tor}{Tor}
 \DeclareMathOperator{\ann}{ann}
\DeclareMathOperator{\injdim}{injdim}
\DeclareMathOperator{\GKdim}{GKdim}
\DeclareMathOperator{\CMreg}{CMreg} 
\DeclareMathOperator{\Extreg}{Extreg} 
\DeclareMathOperator{\Torreg}{Torreg}
\DeclareMathOperator{\reg}{reg}
\DeclareMathOperator{\Hom}{Hom}
\DeclareMathOperator{\RHom}{RHom}
\DeclareMathOperator{\GrMod}{{\sf Gr}}
\DeclareMathOperator{\im}{im}
\newcommand{\fm}{\mathfrak{m}}
\newcommand{\id}{\operatorname{id}}
\newcommand{\ch}{\operatorname{char}}
\newcommand{\be}{\begin{enumerate}}
\newcommand{\ee}{\end{enumerate}}
\newcommand{\bq}{\begin{eqnarray*}}
\newcommand{\eq}{\end{eqnarray*}}
\newcommand{\bqn}{\begin{eqnarray}}
\newcommand{\eqn}{\end{eqnarray}}
\newcommand{\op}{\text{op}}
\begin{document}
\title[Degree bounds for Hopf actions on AS regular algebras]
{Degree bounds for Hopf actions\\
on Artin--Schelter regular algebras}

\author{E. Kirkman, R. Won and J. J. Zhang}

\address{Kirkman: Department of Mathematics,
P. O. Box 7388, Wake Forest University, Winston-Salem, NC 27109}

\email{kirkman@wfu.edu}

\address{Won: Department of Mathematics, The George Washington University, Washington, DC 20052, USA}
\email{robertwon@gwu.edu}

\address{Zhang: Department of Mathematics, Box 354350,
University of Washington, Seattle, Washington 98195, USA}

\email{zhang@math.washington.edu}

\begin{abstract}
We study semisimple Hopf algebra actions on Artin--Schelter regular algebras
and prove several upper bounds on the degrees
of the minimal generators of the invariant subring, and on the degrees of syzygies of modules over the invariant subring.
These results are analogues of results for group actions on commutative polynomial rings proved by Noether, Fogarty, Fleischmann, Derksen, Sidman,  Chardin, and Symonds.
\end{abstract}
\makeatletter
\@namedef{subjclassname@2020}{%
  \textup{2020} Mathematics Subject Classification}
\makeatother
\subjclass[2020]{16E10, 16E65, 16T05, 16W22, 20J99}

\keywords{Artin--Schelter regular algebras, Hopf algebra actions, 
invariant subrings, Noether bound, syzygy degree bound, 
$\tau$-saturation degree, Castelnuovo--Mumford regularity}

\maketitle

\setcounter{section}{-1}

\section{Introduction}
\label{xxsec0}

Throughout, let $\Bbbk$ be a field.
The invariant subring $T^G$ of a commutative polynomial 
ring $T:=\Bbbk[x_1, \dots, x_n]$ under the linear action 
of a group $G$ has played an important role in 
commutative algebra and algebraic geometry. Producing 
a minimal generating set for $T^G$ (as an algebra) is 
the first step in understanding the invariant subring. 
In 1916, Noether proved the 
following fundamental theorem, which is sometimes called 
Noether's upper bound theorem. 

\begin{theorem}[{\cite{Noe}}]
\label{xxthm0.1}
If $\Bbbk$ is a field of characteristic zero and $G$ is 
a finite group of invertible $n \times n$ matrices acting 
linearly on $T:= \Bbbk[x_1, \dots, x_n]$ then the ring of 
invariants $T^G$ can be generated as a $\Bbbk$-algebra by 
polynomials of total degree $\leq |G|$. 
\end{theorem}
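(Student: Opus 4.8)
The plan is to combine the Reynolds (averaging) operator with the classical theory of symmetric functions applied to the $G$-orbit of a generic linear form. Since $\ch \Bbbk = 0$, set $N = |G|$ and introduce the averaging operator
\[
\pi \colon T \lra T, \qquad \pi(f) = \frac1N \sum_{g\in G} g\cdot f .
\]
This is a $\Bbbk$-linear projection onto $T^G$ that preserves degree, so $T^G$ is spanned by $\{\pi(m) : m \text{ a monomial in } x_1,\dots,x_n\}$. Consequently it suffices to prove that $\pi(x^\alpha)$ for every multi-index $\alpha$ with $|\alpha| > N$ lies in the $\Bbbk$-subalgebra generated by the invariants $\pi(x^\beta)$ with $|\beta| \le N$.

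Next, I would introduce auxiliary commuting indeterminates $t_1, \dots, t_n$, form the linear forms $\ell_g := \sum_{i} t_i\, g(x_i)$ in $\Bbbk[\underline t] \otimes_\Bbbk T$ for $g \in G$, and consider the power sums $p_d := \sum_{g\in G} \ell_g^{\,d}$. The $G$-action sends $\ell_g$ to $\ell_{hg}$, so it merely permutes the forms $\{\ell_g\}$; hence every symmetric function of them, in particular each $p_d$, is $G$-invariant. Expanding by the multinomial theorem gives
\[
p_d = \sum_{|\alpha| = d} \binom{d}{\alpha}\, t^\alpha \sum_{g\in G} g(x^\alpha) = N \sum_{|\alpha| = d} \binom{d}{\alpha}\, \pi(x^\alpha)\, t^\alpha,
\]
so that (because $\ch \Bbbk = 0$ makes every $\binom{d}{\alpha}$ invertible) the degree-$d$ invariant $\pi(x^\alpha)$ is recovered, up to a nonzero scalar, as the coefficient of $t^\alpha$ in $p_d$.

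The crux is then symmetric function theory. There are exactly $N$ forms $\ell_g$, so their elementary symmetric functions $e_1, \dots, e_N$ generate the algebra of symmetric functions of the $\ell_g$, and Newton's identities (again using $\ch \Bbbk = 0$) express every power sum $p_d$, including those with $d > N$, as a polynomial $\Phi_d(p_1, \dots, p_N)$ with coefficients in $\Bbbk$. Reading off the coefficient of $t^\alpha$ on both sides, the invariant $\pi(x^\alpha)$ becomes a $\Bbbk$-polynomial combination of products of $t$-coefficients of $p_1, \dots, p_N$, each of which is (a scalar times) an invariant $\pi(x^\beta)$ with $|\beta| \le N$. This exhibits $\pi(x^\alpha)$ inside the subalgebra generated by invariants of degree $\le N$, and an induction on $d$ finishes the argument.

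The step I expect to be the main obstacle is the bookkeeping in the last paragraph: one must check carefully that substituting the low-degree power sums into $\Phi_d$ and then extracting a single $t$-monomial really does produce \emph{products} of the $\pi(x^\beta)$, and that the weights add up correctly so that only $\beta$ with $|\beta|\le N$ occur. Everything else is formal, but this is also where the hypothesis $\ch \Bbbk = 0$ is genuinely used twice, once to normalize the averaging operator $\pi$ and once to invoke Newton's identities and to invert the multinomial coefficients, so the argument as stated does not survive into positive characteristic.
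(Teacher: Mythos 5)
Your proposal is correct, and it is precisely Noether's classical 1916 argument (Reynolds operator, power sums of the $G$-orbit of a generic linear form in auxiliary variables, and Newton's identities), which is exactly what the paper's citation to \cite{Noe} refers to --- the paper itself states this theorem without reproducing a proof. The only cosmetic remark is that the final induction on $d$ is unnecessary: once $p_d$ is written as a polynomial in $p_1,\dots,p_N$, extracting the coefficient of $t^\alpha$ directly exhibits $\pi(x^\alpha)$ as a polynomial in the invariants $\pi(x^\beta)$ with $|\beta|\leq N$.
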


This result is extremely useful in explicitly computing the invariant 
subring, for in characteristic zero, invariants are linear 
combinations of elements of the form $\sum_{g \in G} g.m$, 
 the sum of the elements in the $G$-orbit of a 
monomial $m \in T$. Knowing an upper bound on the degrees of 
minimal generators of the invariants affords an algorithm for 
finding the generators, as one can compute these sums for all 
monomials $m$ of degree less than or equal to the bound.

Noether's upper bound was extended to the non-modular case 
(where $\ch \Bbbk$ does not divide $|G|$) independently by 
Fleischmann \cite{Fl} in 2000, Fogarty \cite{Fo} in 2001, and 
Derksen and Sidman \cite{DS} in 2002. Derksen and Sidman used 
a homological invariant, the Castelnuovo--Mumford (CM) regularity of 
a subspace arrangement associated to the action, to obtain 
their bound. Hence in the non-modular case there is a bound on 
the degrees of the minimal generators that is independent of 
the representation of the group, though the actual degrees of 
the generators may be quite a bit less than the order of the 
group (e.g., Domokos and Heged\"{u}s \cite{DH} provided a smaller 
upper bound on the degrees of generators if $G$ is not a cyclic 
group, and this result was extended to the non-modular case by 
Sezer \cite{Se}). Surveys of results extending Noether's bound 
that were obtained before 2007 can be found in \cite{Neu} and 
\cite{We}.

In the modular case, 
the Noether bound does not hold; 
for example if $\ch \Bbbk = 2$, there is 
an action of the group of order 2 on $\Bbbk[x_1,x_2, x_3, x_4,x_5,x_6]$ that 
requires a generator of degree 3 \cite[Example 3.5.5(a)]{DK}.  
Using CM regularity (though 
differently than Derksen and Sidman \cite{DS}), Symonds proved a 
bound that depends on both the order of the group $G$ and the 
dimension of the representation of $G$: if $G$ acts on 
$T=\Bbbk[x_1, \dots, x_n]$ for $n >1$ and $|G| \geq 2$ then 
$T^G$ can be generated by elements of degree $\leq n (|G|-1)$
\cite[Corollary 0.2]{Sy}. Hence in the modular case the upper 
bound depends upon both $|G|$ and the degree of the representation,
or, equivalently, the global dimension of $T$.

It is also natural to look for bounds on the maximal degrees of 
syzygies of $T^G$ as a 
quotient module over another polynomial ring and the maximal 
degrees of syzygies of the 
trivial module $\Bbbk$ over $T^G$.
Castelnuovo--Mumford regularity 
again has been useful in finding such bounds \cite{De, CS, Sy}. 
A nice result of Chardin and Symonds \cite[Theorem 1.3]{CS}, 
generalizing work of Derksen \cite{De}, states that, if $\Bbbk$ 
has characteristic zero, then for all $i \geq 2$  $\beta_i(T^G)$, the maximal degree of 
$\Tor^{T^G}_i(\Bbbk,\Bbbk)$, is bounded by {$|G| i+i-2$}. 
There is also an approach using the theory of twisted commutative algebras,
which Snowden used to establish bounds on maximal degrees of syzygies \cite{Sn}.
Gandini used similar techniques
to extend the Noether bound (and syzygy bound) to a noncommutative 
ring, the exterior algebra, in the characteristic zero case 
\cite[Theorem V1.13]{Ga}.

Our main goal in this paper is to explore degree bounds on 
minimal generators, as well as of syzygies, for invariants in 
noncommutative algebras, where little is known about these bounds. 
We call an algebra $A$ {\em connected graded} 
if it has a $\Bbbk$-vector space decomposition
$$A = \Bbbk \oplus A_1  \oplus A_2  \oplus \cdots$$
with $1 \in A_0$, and $A_iA_j \subseteq  A_{i+j}$ for all 
$i, j \in \mathbb{N}$. Throughout let $A$ be a connected 
graded noetherian algebra. If $M$ is a graded 
(left or right) $A$-module, the shifted $A$-module $M(n)$ is the 
graded module defined by $M(n)_i = M_{n+i}$. Denote by 
$\beta(A) \in {\mathbb N}\cup \{\infty\}$ the largest degree of 
an element in a minimal generating set of $A$. 

In the noncommutative setting, it is natural to 
replace the commutative polynomial ring with a noetherian 
Artin--Schelter regular $\Bbbk$-algebra \cite{AS} generated in 
degree one, as such algebras share many of the homological 
properties of commutative polynomial rings, and, when commutative, 
these Artin--Schelter regular algebras are isomorphic to polynomial 
rings. 

\begin{definition}
\label{xxdef0.2}
A connected graded algebra $T$ is called {\it Artin--Schelter 
Gorenstein} (or {\it AS Gorenstein}, for short) if the following 
conditions hold:
\begin{enumerate}
\item[(a)]
$T$ has injective dimension $d<\infty$ on the left and on the 
right,
\item[(b)]
$\Ext^i_T({}_T\Bbbk, {}_{T}T)=\Ext^i_{T}(\Bbbk_T,T_T)=0$ for all
$i\neq d$, and
\item[(c)]
$\Ext^d_T({}_T\Bbbk, {}_{T}T)\cong \Ext^d_{T}(\Bbbk_T,T_T)\cong 
\Bbbk(\bfl)$ for some integer $\bfl$. Here $\bfl$ is called the 
{\it AS index} of $T$.
\end{enumerate}
In this case, we say $T$ is of type $(d,\bfl)$. If in addition,
\begin{enumerate}
\item[(d)]
$T$ has finite global dimension, and
\item[(e)]
$T$ has finite Gelfand--Kirillov dimension (see \eqref{E1.7.1}),
\end{enumerate}
then $T$ is called {\it Artin--Schelter regular} (or {\it AS
regular}, for short) of dimension $d$.
\end{definition}

Throughout, we will use the letters $S$ and $T$ to denote AS 
regular (or AS Gorenstein) algebras, while the letters $A$ and $B$ 
will usually be used for connected graded algebras, more generally. 

One reason we focus on AS Gorenstein algebras is that
noncommutative CM regularity [Definition~\ref{xxdef2.9}]
has been studied for these algebras by J{\o}rgensen 
\cite{Jo2, Jo3} and Dong and Wu \cite{DW}. 
As in the commutative case, we will see that CM regularity
is an important tool for proving bounds on the degrees of generators.
Conversely, results on degrees of generators 
contribute to the further understanding of 
noncommutative CM regularity. 

We will consider groups $G$ that act on AS regular algebras 
$T$ via graded automorphisms, and, more generally, semisimple 
Hopf algebras that act homogeneously on $T$. For a Hopf algebra 
$H$, we use the standard notation $\Delta: H \rightarrow  
H \otimes H$ for the coproduct, $\varepsilon: H \rightarrow 
\Bbbk$ for the counit, and $S: H \rightarrow H$ for the 
antipode of $H$. Further details on Hopf actions on algebras 
can be found in \cite{Mo}. In most cases, we will assume the 
following hypotheses.

\begin{hypothesis}
\label{xxhyp0.3}
\begin{enumerate}
\item[(a)] 
$H$ is a semisimple, hence finite-dimensional (by \cite{LZ}),
Hopf algebra,
\item[(b)] 
$T$ is a connected graded noetherian AS Gorenstein algebra of injective
dimension at least two, and
\item[(c)]
$T$ is a left $H$-module algebra and for each $i$, $T_i$ is 
a left H-submodule of $T$.
\end{enumerate}
\end{hypothesis}

\noindent Previous work shows that many results concerning group 
actions on commutative polynomial rings have generalizations 
to the context of Hopf actions on AS regular algebras 
(see, e.g., \cite{CKWZ1, CKWZ2, CKWZ3, CG, FKMW1, FKMW2, FKMP, Ki, KKZ1, KKZ2, 
KKZ3, KKZ6}).

Unfortunately, there is a serious lack of understanding of 
degree bounds in the noncommutative context. To illustrate 
this, note the following two facts that indicate that the
noncommutative case is quite different from the commutative case.
\begin{enumerate}
\item[(a)]
In the commutative case, if any finite group $G$ acts nontrivially
on the polynomial ring $\Bbbk[x_1,\cdots,x_n]$, then 
$\beta(\Bbbk[x_1,\cdots,x_n]^G) >1$.
However, when $T$ is noncommutative, $\beta(T^G)$ can be 1 even 
if $T$ is a Koszul AS regular algebra [Example \ref{xxex1.2}(2)].

\item[(b)]
In the commutative non-modular case, by Theorem \ref{xxthm0.1} 
and \cite{Fl, Fo, DS}, $\beta(\Bbbk[x_1,\cdots,x_n]^G) \leq |G|$. 
However, if $T$ is noncommutative, then $\beta(T^G)$ can be strictly 
larger than $|G|$.
In [Example \ref{xxex1.2}(3)], we provide an example of a $\mathbb{Z}/(2)$
action on $\Bbbk_{-1}[x_1,x_2]$ such that $\beta(\Bbbk_{-1}[x_1,x_2]^{\mathbb{Z}/(2)}) = 3$. 
Furthermore, Ferraro, Moore, Peng, and the first-named author showed
that for $n$ odd there is a cyclic group of order $2n$ acting  
on $T=\Bbbk_{-1}[x_1,x_2]$ with $\beta(T^G) = 3n$ \cite[Theorem 2.5]{FKMP}. 
Hence, the difference $\beta(T^G) - |G|$ can, in fact, be arbitrarily large.
\end{enumerate}

In our noncommutative context, an upper bound on the degrees 
of minimal algebra generators for the invariant subring of an 
AS regular algebra $T$ might depend on both the algebra $T$, as 
well as the group (or Hopf algebra) and its representation, while 
classical invariant theory is restricted to considering only the 
single commutative AS regular algebra $\Bbbk[x_1, \dots, x_n]$. 
In Example~\ref{xxex3.6}, we show that for any $m$, there is a 
2-generated noetherian AS regular algebra on which a group of 
order 2 acts such that the maximal degree of minimal generators 
of the invariant subring is at least $m$.  Hence, there is no 
bound on the maximal degree of a generating set of $T^G$ that 
holds for all noetherian AS regular algebras $T$ that is 
dependent upon only the number of generators of $T$ and the order 
of the group $G$. It would be nice to have a degree bound that 
does not depend on the action of $G$ (or more generally $H$) on 
$T$. We pose the following question.

\begin{question}
\label{xxque0.4}
Suppose $(T,H)$ satisfies Hypothesis \ref{xxhyp0.3}. Assume 
that $T$ is an AS regular domain. Is $\beta(T^H)$ --- the 
maximal degree of a minimal generating set of $T^H$ --- 
bounded by a function of the numerical invariants
$$\dim_{\Bbbk} H, \quad \gldim T, \quad \CMreg(T)?$$
\end{question}

We are able to answer this
question in some special cases.
First, we generalize a commutative result
and show that $\beta(T^H)$
is bounded by $\tau_H(T)$, the $\tau$-saturation degree 
of the $H$-action on $T$ (see Definition~\ref{xxdef1.1}(2)).

\begin{theorem}[{Corollary~\ref{xxcor3.3}}]
\label{xxthm0.5b}
Let $A$ be a connected graded algebra and let $H$ be a 
semisimple Hopf algebra acting on $A$ homogeneously. Then 
$\beta(A^H) \leq \tau_H(A)$. 
\end{theorem}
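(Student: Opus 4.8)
The plan is to exhibit a graded $A^H$-bimodule projection $R\colon A \to A^H$ --- the integral, or ``Reynolds,'' operator --- and to use it to transport a generation statement inside $A$, which is exactly what $\tau_H(A)$ controls, down to the invariant subring. Write $B := A^H$, and let $A_{\geq 1} = \bigoplus_{i\geq 1} A_i$ and $B_{\geq 1} = \bigoplus_{i\geq 1} B_i$ denote the respective augmentation ideals. If $\tau_H(A) = \infty$ the asserted inequality is vacuous, so I may assume it is finite.

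First I would construct $R$. Since $H$ is semisimple (Maschke's theorem for Hopf algebras), it contains a normalized left integral $\Lambda$ with $\varepsilon(\Lambda)=1$, and I set $R(a) := \Lambda\cdot a$. Because the action is homogeneous, $R$ is degree-preserving; that $\Lambda$ is a normalized left integral gives $R(A)=B$ and $R|_{B} = \id_B$, so $R$ is a graded $\Bbbk$-linear projection onto $B$. The module-algebra axioms together with the counit identities $\sum \varepsilon(\Lambda_1)\Lambda_2 = \Lambda = \sum \Lambda_1\varepsilon(\Lambda_2)$ show that $R$ is $B$-bilinear: $R(bab') = b\,R(a)\,b'$ for $b,b'\in B$ and $a\in A$. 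Consequently $R$ carries the sub-bimodule $A_{\geq 1}B_{\geq 1} + B_{\geq 1}A_{\geq 1}$ into $B_{\geq 1}^2 := B_{\geq 1}\cdot B_{\geq 1}$, since for $a\in A_{\geq 1}$, $b\in B_{\geq 1}$ one has $R(ab) = R(a)\,b \in B_{\geq 1}B_{\geq 1}$ by right $B$-linearity (note $R(a)\in B_{\geq 1}$ as $R$ preserves degree), and symmetrically $R(ba) = b\,R(a)\in B_{\geq 1}B_{\geq 1}$.

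Next I would reformulate both sides. On the one hand, $\beta(B)\leq N$ is equivalent to $B_d = (B_{\geq 1}^2)_d$ for every $d>N$, i.e.\ every invariant of degree $>N$ is a sum of products of two positive-degree invariants (graded Nakayama identifies the minimal generators with a basis of $B_{\geq 1}/B_{\geq 1}^2$). On the other hand, the content of the $\tau$-saturation degree is that $N = \tau_H(A)$ is the largest degree in which $A$ fails to be captured by products involving a positive-degree invariant on one side or the other; that is, $A_d = \bigl(A_{\geq 1}B_{\geq 1} + B_{\geq 1}A_{\geq 1}\bigr)_d$ for all $d>\tau_H(A)$. Granting this, the proof concludes in one line: for $d>\tau_H(A)$ apply the graded projection $R$ to the equality $A_d = \bigl(A_{\geq 1}B_{\geq 1}+B_{\geq 1}A_{\geq 1}\bigr)_d$ to get $B_d = R(A_d) \subseteq (B_{\geq 1}^2)_d \subseteq B_d$, forcing $B_d = (B_{\geq 1}^2)_d$ and hence $\beta(B)\leq \tau_H(A)$.

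The step I expect to be the main obstacle is not the Reynolds computation but matching this concrete, generation-controlling quantity to the actual Definition~\ref{xxdef1.1}(2) of $\tau_H(A)$: I would need to verify that the $\tau$-saturation degree, defined there through the torsion functor, really does measure the top degree in which the augmentation ideal $A_{\geq 1}$ is not generated over $B$ by products with $B_{\geq 1}$. Establishing this identification --- presumably the section~3 theorem of which this statement is a corollary --- is where the homological content (local cohomology and the noncommutative CM regularity of the relevant module) enters; once it is in hand, the bimodule-projection argument transfers it to $\beta(A^H)$ essentially formally. A secondary point to check with care is the Hopf-theoretic verification that the normalized integral yields an honest graded $B$-bimodule projection, since in the noncommutative, non-group setting the bilinearity rests on the module-algebra axioms and the counit identities rather than on an averaging formula over group elements.
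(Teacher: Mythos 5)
Your Reynolds-operator construction is correct and standard, and it plays exactly the role that semisimplicity plays in the paper: the bimodule projection $R\colon A\to B:=A^H$ shows $B$ is a direct summand of $A$ as a $B$-bimodule, which is the ``direct summand'' hypothesis of the paper's Lemma~\ref{xxlem3.2}. The genuine gap is in your second paragraph, where you assert that ``the content of the $\tau$-saturation degree'' is that
$A_d = \bigl(A_{\geq 1}B_{\geq 1} + B_{\geq 1}A_{\geq 1}\bigr)_d$ for all $d > \tau_H(A)$.
That is not what Definition~\ref{xxdef1.1}(2) says. The definition gives $\deg\bigl(A/AB_{\geq 1}\bigr)=\tau_H(A)-1$, i.e.
\[
A_d \;=\; (AB_{\geq 1})_d \;=\; B_d + (A_{\geq 1}B_{\geq 1})_d \qquad\text{for } d\geq \tau_H(A),
\]
where the extra summand $B_d$ appears because $1\in A$. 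With only this identity your one-line conclusion is circular: applying $R$ returns $B_d = B_d + R\bigl((A_{\geq 1}B_{\geq 1})_d\bigr)$, which says nothing. Indeed, every positive-degree invariant lies in the Hilbert ideal $J_H(A)=AB_{\geq 1}$ for the trivial reason that $1\in A$, so membership of $B_d$ in $J_H(A)$ carries no information. Worse, your asserted identity is \emph{equivalent} to the conclusion $\beta(B)\leq\tau_H(A)$ (given $R$): one direction is your projection argument, and conversely $\beta(B)\leq\tau_H(A)$ gives $B_d=(B_{\geq 1}^2)_d\subseteq (A_{\geq 1}B_{\geq 1})_d$ for $d>\tau_H(A)$, which absorbs the $B_d$ summand. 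So granting it amounts to assuming the theorem.

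What is actually needed --- and what the paper supplies in Lemma~\ref{xxlem3.2}(1,2) --- is a \emph{fixed finite set} of generators $u_1,\dots,u_n$ of the left ideal $J_H(A)$ of $A$ that are themselves invariants of degree $\leq \tau_H(A)$. Once such generators exist, any $b\in B_d$ with $d>\tau_H(A)$ can be written $b=\sum_i g_i u_i$ with each $g_i\in A$ of \emph{positive} degree (since $\deg u_i\leq \tau_H(A)<d$), and only then does the projection close the argument: $b=R(b)=\sum_i R(g_i)u_i\in (B_{\geq 1}^2)_d$. Producing the $u_i$ is the nontrivial step: the paper does it by noting $J_H(A)\supseteq A_{\geq \tau_H(A)}$, using generation of $A$ in degree $1$ to see that $J_H(A)$ is generated by its components of degree $\leq \tau_H(A)$, and then a cancellation argument to replace a basis of that finite-dimensional piece by invariants of no larger degree. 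Finally, your closing paragraph misdiagnoses where this content lives: neither the definition of $\tau_H(A)$ (which is the elementary quantity $1+\deg(A/AR_{\geq 1})$, not a local-cohomological one) nor the paper's proof of Corollary~\ref{xxcor3.3} involves local cohomology or CM regularity; the missing step is elementary bookkeeping with ideal generators of the Hilbert ideal, not homological machinery.
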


This result is computationally useful, as
 $\tau_H(T)$ is often easy to bound.
We are able to use this result to answer Question~\ref{xxque0.4} 
in the following two cases. Note that $\beta_i$ will be 
defined in \eqref{E1.0.6}.

\begin{theorem}[Theorem \ref{xxthm3.5}]
\label{xxthm0.5}
Suppose that $(T,H)$ satisfies Hypothesis \ref{xxhyp0.3}. 
Assume further that
\begin{enumerate}
\item[(a)]
$T$ is an AS regular domain generated 
in degree 1 such that $T\# H$ is prime, and
\item[(b)]
$T^H$ has finite global dimension.
\end{enumerate}
Then $\beta(T^H)\leq \dim H$.
\end{theorem}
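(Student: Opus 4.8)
The plan is to reduce the whole theorem to the general estimate $\beta(T^H)\le\tau_H(T)$ furnished by Corollary~\ref{xxcor3.3}, so that everything becomes the single assertion that the $\tau$-saturation degree satisfies $\tau_H(T)\le\dim_{\Bbbk}H$ under hypotheses (a) and (b). The point is that $\dim_{\Bbbk}H$ plays, for a semisimple Hopf action, exactly the role that the group order $|G|$ plays in the commutative Noether bound; so the target inequality is the AS regular analogue of the classical fact (Noether, Fleischmann--Fogarty--Derksen--Sidman) that the relevant saturation degree of a polynomial ring under a non-modular group action is at most $|G|$.

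First I would assemble the structural input. The invariant subring $T^H$ is AS Gorenstein by the noncommutative Watanabe-type theorem for semisimple Hopf actions on AS regular algebras, and hypothesis (b) then promotes $T^H$ to AS regular. Now $T$ is a finitely generated graded $T^H$-module (as $H$ is semisimple and $T$ is noetherian), and it is torsion-free over $T^H$ because $T$ is a domain. A depth computation—using that the irrelevant ideals correspond under the finite extension $T^H\subseteq T$—shows that $T$ is maximal Cohen--Macaulay over the regular ring $T^H$, so by the graded Auslander--Buchsbaum equality $T$ is graded free: $T\cong\bigoplus_j T^H(-c_j)$. The number of summands is the rank of $T$ over $T^H$.

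The decisive numerical point is that this rank equals $\dim_{\Bbbk}H$, and this is where primeness of $T\#H$ is indispensable: it guarantees that the extension $T^H\subseteq T$ is ``full,'' so the rank attains the generic value dictated by the integral of $H$—the Hopf-theoretic substitute for the equality $|G|=[\operatorname{Frac}(T):\operatorname{Frac}(T^G)]$ of the classical Galois picture—rather than a proper divisor of it. Equivalently, comparing Hilbert series gives $H_T(t)=\big(\sum_j t^{c_j}\big)\,H_{T^H}(t)$, and the polynomial $\sum_j t^{c_j}$, evaluated at $t=1$, returns the rank $\dim_{\Bbbk}H$. With $T^H$ AS regular and $T$ free of this known rank over it, I would then bound $\tau_H(T)$ by unwinding Definition~\ref{xxdef1.1}(2): the saturation defect of the $H$-action is governed by the interaction of the local cohomology of $T$ with the $T^H$-module structure, where the shifts $c_j$ and the AS index of $T^H$ control the relevant top nonzero degree and $\dim_{\Bbbk}H$ enters precisely as the number of free summands.

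I expect the main obstacle to be exactly this last estimate $\tau_H(T)\le\dim_{\Bbbk}H$, and within it the exact rank computation. Establishing AS regularity of $T^H$ (hence freeness of $T$) is routine given the cited Watanabe-type theorem, but pinning the rank of $T$ over $T^H$ to be \emph{exactly} $\dim_{\Bbbk}H$, and not merely a divisor of it, genuinely requires both the semisimplicity of $H$ (to obtain a normalized integral and a Reynolds-type projection $T\to T^H$) and the primeness of $T\#H$ (to preclude degeneration of the extension). A secondary difficulty is bookkeeping: one must verify that the abstractly defined $\tau_H(T)$ of Definition~\ref{xxdef1.1}(2) is indeed the quantity captured by this relative-cohomology data, so that the Hilbert-series and rank information translate faithfully into the claimed degree bound. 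Once these are in place, combining with Corollary~\ref{xxcor3.3} yields $\beta(T^H)\le\tau_H(T)\le\dim_{\Bbbk}H=\dim H$.
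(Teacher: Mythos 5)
Your overall route is the paper's: Corollary \ref{xxcor3.3} reduces the theorem to $\tau_H(T)\le \dim H$, and the structural facts you assemble---$T^H$ AS Gorenstein by the Watanabe-type theorem, hence AS regular by hypothesis (b); $T$ finitely generated and then graded free over $T^H$; rank equal to $\dim H$ forced by primeness of $T\#H$---are exactly Proposition \ref{xxpro1.5} and the first half of Example \ref{xxex1.6} (the paper gets freeness from \cite{KKZ1} and the rank from the simple artinian quotient ring $Q\#H$ via \cite{CFM}, rather than from an Auslander--Buchsbaum argument, but that difference is immaterial).

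The gap is the step you leave vague, and it is the decisive one: passing from ``$T\cong\bigoplus_{j=1}^{d}T^H(-c_j)$ with $d=\dim H$'' to ``$\max_j c_j\le d-1$,'' i.e.\ to $\tau_H(T)=1+\deg\bigl(T/J_H(T)\bigr)\le d$. You propose to extract this from local cohomology, the AS index of $T^H$, and the Hilbert-series identity $h_T(t)=\bigl(\sum_j t^{c_j}\bigr)h_{T^H}(t)$. That data cannot yield the bound: it determines the number of summands ($d$, at $t=1$) and relates $\max_j c_j$ to a difference of AS indices, but it places no constraint of size $d-1$ on the individual shifts. Concretely, take $T=\Bbbk[x,y]$ with $\deg x=1$, $\deg y=5$, and $G={\mathbb Z}/(2)$ acting by $y\mapsto -y$ (with $\ch \Bbbk\neq 2$): then $T\#\Bbbk G$ is prime, $T^G=\Bbbk[x,y^2]$ is AS regular, and $T$ is graded free over $T^G$ of rank $2=\dim \Bbbk G$ with shifts $\{0,5\}$---every structural fact you invoke holds---yet $\tau_G(T)=6>2$. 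What rules this out under hypothesis (a), and what the paper actually uses in Example \ref{xxex1.6}, is an elementary pigeonhole argument your sketch never makes: $M:=T/J_H(T)$ is a cyclic left $T$-module generated in degree $0$, and since $T$ is generated in degree $1$ we have $M_{i+1}=T_1M_i$, so the nonzero components of $M$ occupy the consecutive degrees $0,1,\dots,\deg M$ with no gaps; as $\dim_{\Bbbk} M=d$, this forces $\deg M\le d-1$ and hence $\tau_H(T)\le d$. Your proposal lists generation in degree one among the hypotheses but never uses it at the point where it is essential, and no amount of local-cohomology or Hilbert-series bookkeeping can substitute for it.
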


\begin{theorem}
\label{xxthm0.6}
Suppose $\Bbbk$ is an infinite field, $G$ is 
a finite group, and $\Bbbk G$ is semisimple.  
Suppose $G$ acts via graded automorphisms 
on $\Bbbk_{-1}[x_1, \dots, x_n]$. 
\begin{enumerate}
\item[(1)] {\rm{[Corollary \ref{xxcor3.12}]}} Then
$$\beta(\Bbbk_{-1}[x_1, \dots, x_n]^G) \leq 2|G| +n.$$
\item[(2)] {\rm{[Corollary \ref{xxcor5.10}]}}
Assume 
that $\Bbbk_{-1}[x_1, \dots, x_n]^G$ is commutative.  Then
$$\beta_i(\Bbbk_{-1}[x_1, \dots, x_n]^G)\leq i (2|G|+n+1)-2$$
for all $i\geq 2$.
\end{enumerate}
\end{theorem}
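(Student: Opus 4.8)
The plan is to deduce both parts from Theorem~\ref{xxthm0.5b}, the bound $\beta(A^H)\le\tau_H(A)$, thereby reducing everything to estimating the $\tau$-saturation degree of the $G$-action on $T:=\Bbbk_{-1}[x_1,\dots,x_n]$. For Part~(1) I would exploit that $T$ is a $2$-cocycle twist of the commutative polynomial ring $S:=\Bbbk[x_1,\dots,x_n]$: the two share the same $\mathbb{N}^n$-graded vector space, and their products differ only by the signs $\mu(u,v)=(-1)^{\sum_{i>j}u_iv_j}$. Because this cocycle is $\langle\xi\rangle$-valued for the central grading automorphism $\xi\colon x_i\mapsto -x_i$ (central because it is $-\id$ on $T_1$ and hence commutes with every graded automorphism), a graded $G$-action on $T$ should be compared with an action of a group $\widehat G$ fitting into a central extension $1\to\langle\xi\rangle\to\widehat G\to G\to 1$, so that $|\widehat G|=2|G|$. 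The first task is then to verify that the twist identifies the $\tau$-saturation data of $(T,G)$ with that of $(S,\widehat G)$, so that $\tau_G(T)$ is controlled by a genuinely commutative invariant-theoretic quantity.

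Next I would bound $\tau_{\widehat G}(S)$ in the commutative non-modular setting; here $\ch\Bbbk\nmid 2|G|$, so $\Bbbk\widehat G$ is again semisimple and a Reynolds projection $S\to S^{\widehat G}$ exists. I expect a Noether-type contribution of $|\widehat G|=2|G|$ together with a Castelnuovo--Mumford regularity correction governed by the homological size of the ambient algebra. Since $T$ is Koszul AS regular of global dimension $n$, I anticipate that this correction is exactly $n$, giving $\tau_G(T)\le 2|G|+n$ and hence, via Theorem~\ref{xxthm0.5b}, $\beta(T^G)\le 2|G|+n$.

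For Part~(2), with $R:=T^G$ now assumed commutative, I would establish a Chardin--Symonds-type passage from the generation degree to higher syzygies. The shape of the target, $\beta_i(R)\le i(2|G|+n+1)-2=i\,\tau_G(T)+(i-2)$, is exactly the commutative Chardin--Symonds pattern $\beta_i(R)\le i\,\beta(R)+(i-2)$ with the Noether bound $|G|$ replaced by $\tau_G(T)$. Concretely, I would present $R$ as a quotient of a commutative polynomial ring $P$ with generators in degrees $\le 2|G|+n$ (legitimate by Part~(1)), regard $T$ as a finite $R$-module via the Reynolds splitting, and bound $\reg_P(R)$ by the noncommutative CM regularity of $T$ over $R$, i.e.\ by $\tau_G(T)$. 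Translating such a regularity bound into the degrees of $\Tor^R_i(\Bbbk,\Bbbk)$ then yields the stated linear-in-$i$ estimate.

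The main obstacle, in both parts, is the accounting of the two extra contributions. Obtaining the factor $2|G|$ rather than $|G|$ requires a careful construction of $\widehat G$ together with a proof that $\tau$-saturation is preserved---not merely comparable up to an uncontrolled shift---under the cocycle twist; and pinning the regularity correction to exactly $n$, rather than a cruder $O(n|G|)$ term, is where the AS regular/Koszul structure of $T$ and the noncommutative CM regularity theory of \cite{Jo2, Jo3, DW} must be used most carefully. For Part~(2) the corresponding difficulty is showing that the regularity of the commutative invariant ring $R$ is controlled by $\tau_G(T)$ itself rather than by some larger invariant of the auxiliary polynomial ring $P$.
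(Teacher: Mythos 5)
There is a genuine gap in both parts, and in both cases it is the central step that fails. For part (1), your reduction $\beta(T^G)\le \tau_G(T)$ via Theorem~\ref{xxthm0.5b} matches the paper (Corollary~\ref{xxcor3.3}), but your proposed computation of $\tau_G(T)$ through the cocycle twist does not work as stated. The twist identifying $T=\Bbbk_{-1}[x_1,\dots,x_n]$ with $S=\Bbbk[x_1,\dots,x_n]$ only respects ${\mathbb Z}^n$-graded structures, while the Hilbert ideal $J_G(T)=T(T^G)_{\ge 1}$ is not ${\mathbb Z}^n$-graded (invariants such as $x_1+x_2$ are not ${\mathbb Z}^n$-homogeneous), so the twist cannot transport the $\tau$-saturation data. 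Concretely, for $G=\langle (1\,2)\rangle$ acting on $\Bbbk_{-1}[x_1,x_2]$ one has $x_1+x_2\in (T^G)_1$ but $(S^{\widehat G})_1=0$ for your extended group $\widehat G=\langle G,\xi\rangle$: the invariant rings are not even isomorphic as graded vector spaces, so no degree-preserving identification of Hilbert ideals exists. Moreover your accounting is internally inconsistent: if the identification were exact you would get $\tau_G(T)\le |\widehat G|=2|G|$ with no room for the $+n$, and the ``CM regularity correction exactly $n$'' is asserted with no mechanism. The paper's actual source of both terms is different: every graded $G$-action preserves the central subring $Z=\Bbbk[x_1^2,\dots,x_n^2]$, Fogarty's Benson-lemma argument \cite{Fo} applied to the $G$-action on $Z$ gives $Z_{\ge 2|G|}\subseteq J_G(T)$ (the $2|G|$ is $|G|$ times the degree $2$ of the generators of $Z$, not the order of an extended group), and $T$ is generated as a $Z$-module by the square-free monomials, which have degree $\le n$; hence $T_{\ge 2|G|+n}\subseteq J_G(T)$, i.e.\ $\tau_G(T)\le 2|G|+n$ (Proposition~\ref{xxpro3.11}(1), Corollary~\ref{xxcor3.12}).

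For part (2), you correctly identify the Chardin--Symonds pattern and the legitimate move of presenting the commutative ring $R=T^G$ as a quotient of a polynomial ring $P$ with generators in degrees $\le 2|G|+n$, but the pivotal step --- ``bound $\reg_P(R)$ by $\tau_G(T)$'' --- is neither a correct formulation of what is needed nor supported by an argument. What the paper's proof of Corollary~\ref{xxcor5.10} actually requires is a bound, \emph{uniform in $i$}, on $\deg T/\ann_T(\Tor^P_i(T,\Bbbk))$; this is supplied by Proposition~\ref{xxpro3.11}(2) (again via the central subring $Z$ and Fogarty's identity, giving $\deg T/J_{\infty}\le 2|G|+n-1$), and it is then fed into the change-of-rings spectral sequence machinery (Lemma~\ref{xxlem5.1}, the estimates \eqref{E5.1.3}--\eqref{E5.1.5}, Corollary~\ref{xxcor5.5}, Proposition~\ref{xxpro5.8} applied with $(B,A,C)=(T,P,R)$), with the Koszulity of $T$ (i.e.\ $t^T_2(\Bbbk)=2$) producing the final $-2$. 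A bound on a single regularity of $R$ over $P$ would not suffice to control all the $\Tor^P_i(T,\Bbbk)$ terms appearing in the spectral sequence, and no proof of even that weaker statement is sketched. So the proposal is a plan whose two load-bearing steps (the twist transfer in part (1), the regularity/annihilator bound in part (2)) are respectively false as stated and missing.
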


An interesting open question is whether there is a similar 
bound for Hopf algebra actions on $\Bbbk_{q}[x_1, \dots, x_n]$ 
where $q$ is a root of unity (the only interesting group 
actions occur when $q=\pm 1$ while there are Hopf actions 
for other values of $q$).

One difference between the commutative and noncommutative 
cases is that when a group $G$ acts on $C:= \Bbbk[x_1, \dots, x_n]$ 
via graded automorphisms, there is a graded surjective map 
from a polynomial ring onto $C^G$. On the other hand, if 
a Hopf algebra $H$ acts on an AS regular algebra $T$ as in 
Hypothesis~\ref{xxhyp0.3}, it is not known if there always 
exists an analogous AS regular algebra $S$ mapping onto $T^H$
(see Remark \ref{xxrem5.7}).  

Moreover, in the commutative case, by the Noether Normalization 
Theorem, there exists a polynomial subring $B$ of $C^G$ such 
that $C^G$ is a finitely generated $B$-module. In some cases, 
it is known that an analogous AS regular subalgebra exists 
(e.g., see Lemma \ref{xxlem3.8}). The following result provides 
bounds on the degrees of a minimal generating set of the 
invariant subring and on the relations among the generators 
in the case when there is a graded algebra map from an AS regular algebra 
$S$ to $T^H$; it is a noncommutative version of 
\cite[Proposition 2.1(2, 3)]{Sy}.

\begin{theorem}[Corollary \ref{xxcor4.6}]
\label{xxthm0.7}
Let $(T,H)$ be as in Hypothesis \ref{xxhyp0.3}. Suppose there is a graded 
algebra map $S\to T^H$ where $S$ is a noetherian AS regular 
algebra such that $T^H$ is finitely generated over $S$ on
both sides. Let $\delta(T/S)=\CMreg(T)-\CMreg(S)$. Then
\begin{enumerate}
\item[(1)]
$$\beta(T^H)\leq \max\{\beta(S), \delta(T/S)\} \text{ and}$$ 
\item[(2)]
$$
\beta_2(T^H)\leq \max\left\{2 \delta(T/S),
\delta(T/S)+\beta(S),\beta_2(S)\right\}.$$
\end{enumerate}
\end{theorem}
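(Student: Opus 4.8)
The plan is to reduce both statements to the behaviour of $T^H$ as a module over $S$, exploiting that the semisimplicity of $H$ makes $T^H$ a direct summand of $T$. Concretely, a normalized two-sided integral of $H$ produces a Reynolds projection $\pi\colon T\to T^H$ which, by the module-algebra axioms and invariance of elements of $T^H$, is a homomorphism of $T^H$-bimodules \cite{Mo}; since $S$ acts through $S\to T^H$, it is in particular a homomorphism of $S$-bimodules that restricts to the identity on $T^H$. Thus $T^H$ is a graded direct summand of $T$ as a left and as a right $S$-module, so each $\Tor^S_i(\Bbbk,T^H)$ is a direct summand of $\Tor^S_i(\Bbbk,T)$. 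Writing $t^S_i(M)$ for the top nonzero degree of $\Tor^S_i(\Bbbk,M)$, this gives $t^S_i(T^H)\le t^S_i(T)$ for all $i$, and hence $\Torreg_S(T^H)\le \Torreg_S(T)$.

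Next I would identify $\Torreg_S(T)$ with $\delta(T/S)$. Since $T$ is a finite module over $T^H$ (standard for a semisimple Hopf action on a noetherian algebra) and $T^H$ is finite over $S$, $T$ is finite over $S$; consequently the $\fm_S$- and $\fm_T$-torsion functors agree on $T$, so $\CMreg_S(T)=\CMreg(T)$. Combined with the noncommutative Eisenbud--Goto identity for the AS regular algebra $S$ [Definition~\ref{xxdef2.9}, \cite{Jo2,DW}], namely $\Torreg_S(M)=\CMreg_S(M)-\CMreg(S)$ for finite graded $M$, this yields $\Torreg_S(T)=\CMreg(T)-\CMreg(S)=\delta(T/S)$. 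Feeding this back through the previous paragraph, the module-generation degree and the first module-syzygy degree of $T^H$ over $S$ satisfy
\[
t^S_0(T^H)\le \Torreg_S(T^H)\le \delta(T/S),\qquad t^S_1(T^H)\le \Torreg_S(T^H)+1\le \delta(T/S)+\beta(S),
\]
the last step using $\beta(S)\ge 1$ (as $S\neq\Bbbk$).

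It then remains to convert these module-level bounds over $S$ into bounds on $\beta_i(T^H)=t^{T^H}_i(\Bbbk)$; I would package this as a general statement about any graded map $S\to B$ from an AS regular algebra with $B$ finite over $S$. For part~(1) the argument is elementary: if $B$ is generated as a left $S$-module in degrees $\le t^S_0(B)$, then as an algebra $B$ is generated by the image of a minimal generating set of $S$ together with those module generators, whence $\beta(B)\le\max\{\beta(S),t^S_0(B)\}$. Specializing to $B=T^H$ gives $\beta(T^H)\le\max\{\beta(S),\delta(T/S)\}$, which is~(1). For part~(2) I would run the change-of-rings spectral sequence for $S\to B$ to bound the relation degree $t^{B}_2(\Bbbk)$ by the generator, relation, and first two module-Betti degrees, obtaining a bound of the shape $\max\{\beta_2(S),\ \beta(S)+t^S_0(B),\ 2\,t^S_0(B),\ t^S_1(B)\}$. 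Substituting $t^S_0(T^H)\le\delta(T/S)$ and $t^S_1(T^H)\le\delta(T/S)+\beta(S)$ collapses the four terms into $\max\{2\delta(T/S),\ \delta(T/S)+\beta(S),\ \beta_2(S)\}$, which is~(2).

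The main obstacle is the spectral-sequence bookkeeping underlying part~(2): one must verify that the second syzygies of $\Bbbk$ over $B=T^H$ receive contributions only from (a) relations of $S$, (b) a generator of $S$ against a new module generator, (c) products of two new module generators, and (d) the first module syzygies of $T^H$ over $S$, and that each lands in the asserted degree. Keeping the internal grading exact across the differentials, and checking that the possibly non-injective map $S\to T^H$ introduces no extra low-filtration relations beyond those accounted for, is the delicate point. By contrast, the summand reduction of the first two paragraphs and the elementary generation argument of part~(1) are routine once the Section~2 regularity identities are available.
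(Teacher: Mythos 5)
Your reduction to module-level bounds over $S$ is sound, and part (1) of your argument coincides with the paper's: your elementary generation step is exactly its Lemma \ref{xxlem4.2}. Your route to the inequalities $t^S_0({}_S T^H)\le \delta(T/S)$ and $t^S_1({}_S T^H)\le \delta(T/S)+\beta(S)$ is slightly different from the paper's but valid: you pass the regularity bound through the Reynolds splitting $T^H\subseteq T$ at the level of $\Tor^S_i(\Bbbk,-)$ and then apply Theorem \ref{xxthm2.13}(3) to $T$ as an $S$-module, whereas the paper applies Theorem \ref{xxthm2.13}(3) to $T^H$ itself and then invokes $\CMreg(T^H)\le \CMreg(T)$ (Lemma \ref{xxlem2.15}(1), which is the same direct-summand idea one level up, at local cohomology). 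Both versions rest on the same pillars: semisimplicity of $H$ gives the splitting, and AS regularity of $S$ gives finite projective dimension so that Tor-regularity over $S$ equals $\CMreg(-)-\CMreg(S)$.

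The gap is in part (2). The general statement you need, $\beta_2(B)\le\max\{2t^S_0(B),\,t^S_0(B)+\beta(S),\,\beta_2(S),\,t^S_1(B)\}$ for a graded map $S\to B$ with $B$ finite over $S$, is the paper's Proposition \ref{xxpro4.5}, and it is \emph{not} proved there by a spectral sequence; your plan to obtain it from the change-of-rings spectral sequence runs into a concrete obstruction. The sequence $E^2_{p,q}=\Tor^B_p(\Bbbk,\Tor^S_q(B,\Bbbk))\Rightarrow\Tor^S_{p+q}(\Bbbk,\Bbbk)$ exhibits $\Tor^B_2(\Bbbk,\Bbbk)$ on its bottom row only when $S\to B$ is surjective, since $E^2_{2,0}=\Tor^B_2(\Bbbk,B\otimes_S\Bbbk)$; here $S\to T^H$ is \emph{not} assumed surjective (you flag non-injectivity as the delicate point, but non-surjectivity is the real issue), so $B\otimes_S\Bbbk\neq\Bbbk$ and the relation degrees of $B$ cannot be read off the $E^2$-page. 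Moreover, exactly the contributions you label (b) and (c) --- a generator of $S$ against a new module generator, and products of two new module generators --- arise from the multiplicative structure of $B$, which a spectral sequence built from $S$-module Tor's does not see; these are precisely the source of the terms $t^S_0(B)+\beta(S)$ and $2t^S_0(B)$ in the bound. The paper argues differently: it forms the truncated algebra $\Phi_N(B)$ with the same generators as $B$ but only the relations of degree $\le N$, lifts $S\to B$ to $S\to\Phi_N(B)$ (Lemma \ref{xxlem4.3}), shows that for $N\ge\max\{2t^S_0(B),\,t^S_0(B)+\beta(S),\,\beta_2(S)\}$ the algebra $\Phi_N(B)$ is still generated by the module generators over $S$ (Lemma \ref{xxlem4.4}), and then compares the minimal $S$-free resolution of $B$ with $0\to K\to \Phi_N(B)\to B\to 0$: when also $N\ge t^S_1(B)$ the latter splits over $S$, forcing $K=0$ and hence $\beta_2(B)\le N$. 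To complete your proof you would need to supply this (or an equivalent) argument; the spectral-sequence bookkeeping you defer to cannot be carried out as stated without surjectivity.
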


We also provide a noncommutative version of 
\cite[Theorem 1.2 (1)]{CS}, which gives bounds on the maximal 
degrees in a projective $S$-module resolution of $T^H$ when 
there is a graded surjection from an AS regular algebra $S$ 
onto $T^H$. Let $J_i$ denote the annihilator ideal of the 
finite-dimensional left $T$-module $\Tor^{S}_i(T, \Bbbk)$ and 
let $J_{\infty}=\cap_{j\geq 0} J_j$. Note that $t^S_i$ will be 
defined in \eqref{E1.0.3}--\eqref{E1.0.4}.

\begin{theorem}[Theorem \ref{xxthm5.6}]
\label{xxthm0.8}
Let $(T,H)$ be as in Hypothesis \ref{xxhyp0.3} and assume 
that $T$ is Koszul. Suppose there exists a graded algebra 
surjection $S \to T^H=:R$ where $S$ is a noetherian AS regular 
algebra such that $t^S_j(\Bbbk)\leq  (\deg T/J_{\infty}+2)j$ 
for all $j\geq 0$. Then 
$$t^S_j(R_S)\leq (\deg T/J_{\infty}+2)j+\deg T/J_{\infty}$$
for all $j\geq 0$.
\end{theorem}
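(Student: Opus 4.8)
The plan is to reduce the problem from $R=T^H$ to $T$ itself, and then to control the $S$-syzygy degrees of $T$ by combining the Koszulity of $T$ with a change-of-rings spectral sequence and the degree bound coming from $\deg T/J_\infty$. For the reduction, I would use that $H$ is semisimple: its normalized integral furnishes a Reynolds-type projection $T \to T^H = R$ splitting the inclusion $R \hookrightarrow T$ as a map of graded $R$-bimodules. Since $S$ acts on both $R$ and $T$ through the algebra map $S \to R$, this is in particular a splitting of graded right $S$-modules, so $R$ is a graded $S$-module direct summand of $T$. Hence $\Tor^S_j(R,\Bbbk)$ is a direct summand of $\Tor^S_j(T,\Bbbk)$ and $t^S_j(R_S) \le t^S_j(T_S)$ for all $j$. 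Writing $c := \deg T/J_\infty$, it then suffices to prove $t^S_j(T_S) \le (c+2)j + c$.

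Next I would set up the homological machinery. Put $N_q := \Tor^S_q(T,\Bbbk)$; by hypothesis each $N_q$ is a finite-dimensional left $T$-module annihilated by $J_q \supseteq J_\infty$, hence a finite-dimensional graded $T/J_\infty$-module. The Cartan--Eilenberg change-of-rings spectral sequence for the graded algebra map $S \to T$ reads
$$E^2_{p,q} = \Tor^T_p(\Bbbk, N_q) \Longrightarrow \Tor^S_{p+q}(\Bbbk,\Bbbk),$$
with all differentials preserving the internal grading. The abutment is controlled by hypothesis, since every internal degree occurring in $\Tor^S_n(\Bbbk,\Bbbk)$ is at most $t^S_n(\Bbbk) \le (c+2)n$. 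On the source side, Koszulity of $T$ yields the linear regularity bound $t^T_p(N_q) \le p + \reg_T(N_q)$, and since $N_q$ has finite length its $T$-regularity equals its top nonzero internal degree, i.e. $\reg_T(N_q) = t^S_q(T)$.

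The heart of the argument, which I would carry out by induction on $q$ following Chardin--Symonds, is to split the top internal degree of $N_q$ into a \emph{generator} contribution, namely the top degree of $E^2_{0,q} = \Bbbk \otimes_T N_q$, and a \emph{local-cohomology gap} measuring how far $N_q$ rises above its $T/J_\infty$-generators. The generator degrees are pinned down by the spectral sequence: a top-degree class of $E^2_{0,q}$ must either survive to the abutment, giving a bound $\le (c+2)q$, or be hit by a differential $d_r$ from $E^2_{r,q-r+1} = \Tor^T_r(\Bbbk, N_{q-r+1})$ in the same internal degree, which via the Koszul estimate $t^T_r(N_{q-r+1}) \le r + t^S_{q-r+1}(T)$ and the inductive hypothesis feeds back a controlled bound. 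The gap contribution is where the multiplicity $\deg T/J_\infty$ enters: one shows that for the finite-length $T/J_\infty$-modules $N_q$, the excess of the top degree over the generating degrees is at most $c$. Assembling the generator bound (slope $c+2$ in $q$) with the gap bound (the additive constant $c$) gives $t^S_q(T) \le (c+2)q + c$, and the first paragraph then completes the proof.

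I expect the main obstacle to be precisely this last degree-to-regularity estimate: proving, in the noncommutative Koszul setting, that the Castelnuovo--Mumford regularity of the finite-length $T/J_\infty$-modules $\Tor^S_q(T,\Bbbk)$ exceeds their generating degrees by at most $\deg T/J_\infty$. This is the noncommutative analogue of the local-cohomology computation underlying Chardin--Symonds, and I anticipate it will require the noncommutative CM regularity theory of J{\o}rgensen and Dong--Wu together with a careful analysis of how the annihilator ideals $J_q$, and hence $J_\infty$, constrain the graded structure of these Tor modules. Controlling the interaction between the spectral-sequence differentials and this regularity bound uniformly in $q$ is the delicate bookkeeping step.
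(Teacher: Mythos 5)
Your proposal is correct and is essentially the paper's own argument: the paper packages your spectral-sequence induction (first-column classes either survive to the abutment $\Tor^S_{\bullet}(\Bbbk,\Bbbk)$ or are hit by differentials bounded via $t^T_r(N_{q-r+1})\leq r+\deg N_{q-r+1}$) as Proposition~\ref{xxpro5.2} and Corollary~\ref{xxcor5.5}, applied with $(A,B)=(S,T)$ and $D_i=\deg T/J_{\infty}+2$, and then finishes exactly as you do, using the integral of $H$ to realize $R$ as a right $S$-module direct summand of $T$. The only discrepancy is your assessment of difficulty: the ``gap'' step you expect to require noncommutative CM-regularity theory is elementary in the paper (inequality \eqref{E5.1.5}) --- since $N_q$ is a graded module over $T/J_{\leq q}$ generated in degrees at most $\deg(\Bbbk\otimes_T N_q)$, its top degree exceeds its generating degree by at most $\deg T/J_{\leq q}\leq \deg T/J_{\infty}$, with no local cohomology needed.
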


In practice, one may bound $\deg T/J_{\infty}$ by other means: 
see, for example, Proposition \ref{xxpro3.11}(2).
The following is a noncommutative version of \cite[Theorem 2]{De}.

\begin{theorem}[Theorem \ref{xxthm5.11}]
\label{xxthm0.9}
Let $(T,H)$ be as in Hypothesis \ref{xxhyp0.3} and suppose that
$T$ is AS regular. Suppose further that
\begin{enumerate}
\item[(a)]
$T$ is generated in degree 1 and
\item[(b)]
$S$ is a noetherian AS regular algebra such that the minimal generating 
vector spaces of $S$ and $R:=T^H$ have the same dimension
and there exists a graded algebra surjection $S \to R$.
\end{enumerate}
Then the following statements hold.
\begin{enumerate}
\item[(1)]
We have 
$$\beta_2(R):=t^R_2(\Bbbk)\leq 2-2\CMreg(S)+\CMreg(T).$$
\item[(2)]
Suppose that $\Tor^S_1(\Bbbk, R)\otimes_R \Bbbk \cong 
\Tor^S_1(\Bbbk, R)$. 
Then 
$$t^S_1(_S R)\leq 2-2\CMreg(S)+\CMreg(T).$$
\item[(3)]
Suppose the hypothesis of part {\rm{(2)}}.
Let $K$ be the kernel of the algebra map $S\to R$. Then, as a left
ideal of $A$, $K$ is generated in degree at most
$$2-2\CMreg(S)+\CMreg(T).$$
\end{enumerate}
\end{theorem}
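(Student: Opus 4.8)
The plan is to reduce Theorem~\ref{xxthm0.9} to a single regularity computation for the cokernel $R = T^H$ viewed as a module over $S$, and then extract the three stated consequences from the same numerical input. The key invariant relating the two algebras is the Castelnuovo--Mumford regularity, which by the results of J\o rgensen and Dong--Wu controls the degrees in which $\Tor$-groups and local cohomology of a module are supported. Since $S$ and $T$ are both AS regular and $S \to R$ is surjective with $R = T^H \subseteq T$, the central object is the inclusion $R \hookrightarrow T$, which exhibits $T$ as a module over $R$ and hence over $S$. The hypothesis in (b) that the minimal generating spaces of $S$ and $R$ have equal dimension is what forces the degree-one part of the map $S \to R$ to be an isomorphism, so that the kernel $K$ starts in degree $\ge 2$; this is the feature that makes $\beta_2(R)$ (rather than $\beta_1$) the relevant quantity.

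First I would set up the comparison of regularities. The idea is that $\CMreg(T)$ bounds the top degree of the local cohomology $\HB^j_{\fm}(T)$, and since $R = T^H$ with $H$ semisimple, $T$ decomposes as an $R$-module (using the Reynolds-type projection onto the invariants afforded by semisimplicity) in a way that lets one transfer regularity information from $T$ to $R$ and then pull it back along $S \to R$. Concretely, I would compute $t^S_j(\Bbbk)$, $t^S_j({}_S R)$, and the top nonvanishing degrees of the minimal free resolution of $R$ over $S$, and compare these against $\CMreg(S)$ using the standard relation $\CMreg(M) = \max_i \{t^S_i(M) - i\}$ adapted to the AS Gorenstein setting (Definition~\ref{xxdef2.9}). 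The quantity $2 - 2\CMreg(S) + \CMreg(T)$ that appears in all three parts should emerge as $\CMreg(T) - \CMreg(S)$ shifted by the homological degree $2$ together with the defect $-\CMreg(S)$ coming from the index/shift conventions for AS regular $S$.

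For part (1), I would identify $\beta_2(R) = t^R_2(\Bbbk)$, the top degree of $\Tor^R_2(\Bbbk,\Bbbk)$, which measures the degrees of the relations among the algebra generators of $R$, and bound it via a change-of-rings spectral sequence along $S \to R$ together with the regularity estimate above. For part (2), under the flatness-type hypothesis $\Tor^S_1(\Bbbk,R)\otimes_R \Bbbk \cong \Tor^S_1(\Bbbk,R)$ (which says the degree-one syzygies of $R$ over $S$ are already minimal as an $R$-module), the quantity $t^S_1({}_S R)$ coincides with the top degree of the generators of the kernel module, giving the same bound. Part (3) is then essentially a restatement: the kernel $K$ of $S \to R$ is generated as a left ideal in degrees $\le t^S_1({}_S R)$, so the bound from (2) applies directly.

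The hard part will be establishing the regularity transfer from $T$ to $R$ in the noncommutative setting, since $R = T^H$ need not itself be AS regular or even have finite global dimension, so one cannot invoke duality for $R$ directly. The semisimplicity of $H$ is essential here: it guarantees that $T$ is a direct summand of some free-type $R$-module object up to the $H$-isotypic decomposition, and that local cohomology commutes with the invariants functor, so that $\HB^j_{\fm}(R)$ is controlled by $\HB^j_{\fm}(T)$. Getting the bookkeeping of graded shifts exactly right---so that the final constant is $2 - 2\CMreg(S) + \CMreg(T)$ and not off by the AS index of $S$---is where I expect the delicate part of the argument to lie, and I would handle it by carefully tracking the local duality isomorphisms for the AS Gorenstein algebras $S$ and $T$ as in \cite{Jo2, Jo3, DW}.
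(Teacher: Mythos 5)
Your route is genuinely different from the paper's, but as written it has a gap that blocks the stated constant in part (1). With hypothesis (b), the change-of-rings five-term sequence (Lemma~\ref{xxlem5.1}(2), with $(A,B)=(S,R)$) gives the exact sequence $\Tor^S_2(\Bbbk,\Bbbk)\to \Tor^R_2(\Bbbk,\Bbbk)\to \Bbbk\otimes_R\Tor^S_1(R,\Bbbk)\to 0$, so your method yields $t^R_2(\Bbbk)\leq \max\{t^S_2(\Bbbk),\,t^S_1(R_S)\}$. The second term is fine: by Theorem~\ref{xxthm2.13}(3) and Lemma~\ref{xxlem2.15}(1), $t^S_1(R_S)\leq \Torreg(R_S)+1=\CMreg(R)-\CMreg(S)+1\leq \CMreg(T)-\CMreg(S)+1$, which sits inside the target. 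The problem is the first term: $t^S_2(\Bbbk)=\beta_2(S)$ is an invariant of $S$ alone, and regularity gives only $t^S_2(\Bbbk)\leq 2-\CMreg(S)$, with equality when $S$ is Koszul; moreover this term cannot be discarded, since degree-two relations of $S$ survive as minimal relations of $R$. To fit $2-\CMreg(S)$ under $2-2\CMreg(S)+\CMreg(T)$ you need $\CMreg(S)\leq\CMreg(T)$, and this is \emph{not} a formal consequence of having a surjection $S\to R$: it uses that $T$ is a finite module over the image of $S$, via $0\leq \deg(T/TS_{\geq 1})=t^S_0(T_S)\leq \Torreg(T_S)=\CMreg(T)-\CMreg(S)$ (this is Lemma~\ref{xxlem2.16}). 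Your proposal never establishes, nor identifies the need for, this inequality --- your ``regularity transfer'' discussion only concerns $\CMreg(R)\leq\CMreg(T)$ --- so the contribution of the relations of $S$ is left uncontrolled and the final constant is not reached.

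Two further remarks. First, the paper's own proof (Theorem~\ref{xxthm5.11}) proceeds quite differently: it works over $T$, forming the $H$-equivariant syzygy module $U\subseteq\bigoplus_j T(-d_j)$ of a minimal generating set of $R$, bounds $\CMreg(U)\leq\tau_H(T)+1$ via Lemma~\ref{xxlem2.14}, converts this to generation degrees by Theorem~\ref{xxthm2.13}(3) over $T$, and then uses semisimplicity of $H$ (exactness of $(-)^H$ and the integral) to identify $\Tor^R_2(\Bbbk,\Bbbk)$ with a subspace of $U/(R_{\geq 1}T)U$; this yields the sharper intermediate bound $\tau_H(T)+\tau^{\op}_H(T)-\CMreg(T)$, with Lemma~\ref{xxlem2.16} invoked only at the end. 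There, part (1) is primary and parts (2), (3) are corollaries of it; your plan reverses that order, which is legitimate --- indeed, once the gap above is filled, your estimate $t^S_1({}_SR)\leq \CMreg(T)-\CMreg(S)+1$ proves part (2) directly, without the extra Tor hypothesis (which the paper needs only because it deduces (2) from (1)). Second, a misreading: hypothesis (b) does not say the generators lie in degree $1$ (they need not; see Example~\ref{xxex1.2}(3)), so ``the degree-one part of $S\to R$ is an isomorphism, hence $K$ starts in degree $\geq 2$'' is not what (b) provides; what it provides is $\Tor^S_1(\Bbbk,\Bbbk)\cong\Tor^R_1(\Bbbk,\Bbbk)$, which is exactly what the truncated five-term sequence requires.
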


Both $t_1^S({}_S R)$ and $t_2^R(\Bbbk)$ give information on 
the degrees of the relations between the minimal generators of $R$
(when viewing $R$ as an $S$-module or as a $\Bbbk$-algebra),
and this theorem provides a bound for both $t_1^S({}_S R)$ and $t_2^R(\Bbbk)$
which depends only on the CM regularities of $S$ and $T$.
Other degree bounds for higher syzygies can be found in Corollary 
\ref{xxcor4.8} (bounds on $t_i^S(_SR)$) and Proposition 
\ref{xxpro5.8} (bounds on $t_i^C(\Bbbk)$ where $C= f(A)$ is the 
image of a connected graded algebra).
We remark that some of the bounds that we obtain
for actions on noncommutative AS regular algebras 
differ from the analogous bounds in the classical case 
(e.g., Noether's bound does not hold), but many of our 
bounds reduce to the results known for group actions 
on commutative polynomial rings.

The paper is organized as follows. Section 1 contains basic 
definitions and key examples. In Section~\ref{xxsec2}, we review 
the properties of (noncommutative) local cohomology that are 
needed in the paper and present J{\o}rgensen's definition of 
noncommutative Castelnuovo--Mumford regularity.
In Section~\ref{xxsec3}, we obtain some bounds on $\beta(A^H)$ 
using a noncommutative version of $\tau$-saturation degree that 
was introduced in \cite{Fo}. In Sections~\ref{xxsec4} and 
~\ref{xxsec5}, assuming the existence of a graded algebra map 
from an AS regular algebra $S$ to $T^H$, we prove bounds on the 
degrees of the projective modules in free resolutions obtaining, 
in Section~\ref{xxsec4}, generalizations of results of Derksen and 
Symonds, and, in Section~\ref{xxsec5}, results of Chardin and 
Symonds and of Derksen that can be used to provide bounds on the 
higher syzygies of $T^H$ as an $S$-module.  
We conclude in 
Section~\ref{xxsec6} by listing questions for further study.

\section{Hilbert ideals and $\tau$-saturation degree}
\label{xxsec1}

Let $M=\bigoplus_{d\in {\mathbb Z}} M_d$ be a ${\mathbb Z}$-graded 
$\Bbbk$-vector space. Define the {\it degree} of $M$ to be the 
maximum degree of a nonzero homogeneous element in $M$, namely, 
\begin{equation}
\label{E1.0.1}\tag{E1.0.1}
\deg(M)=\inf\{d\mid  (M)_{\geq d}= 0\}-1=
\sup\{d\mid  (M)_{d}\neq 0\} \quad \in \quad  
{\mathbb Z} \cup\{\pm \infty\}.
\end{equation}
Similarly, we define
\begin{equation}
\label{E1.0.2}\tag{E1.0.2}
\ged(M)=\sup\{d\mid  (M)_{\leq d}= 0\}+1=
\inf\{d\mid  (M)_{d}\neq 0\} \quad \in \quad  
{\mathbb Z} \cup\{\pm \infty\}.
\end{equation}
We say that a graded module $M$ is {\em locally-finite} 
if $\dim_{\Bbbk} M_d<\infty$ for all $d\in {\mathbb Z}$. 
Let $A$ be a connected graded algebra with trivial $A$-bimodule 
denoted by $\Bbbk$. For a graded left $A$-module $M$, let
\begin{equation}
\label{E1.0.3}\tag{E1.0.3}
t^A_i(_A M)=\deg \Tor^A_i(\Bbbk, M).
\end{equation}
If $M$ is a right graded $A$-module, let
\begin{equation}
\label{E1.0.4}\tag{E1.0.4}
t^A_i(M_A)=\deg \Tor^A_i(M,\Bbbk).
\end{equation}
It is clear that $t^A_i(_A \Bbbk)=t^A_i(\Bbbk_A)$.
If the context is clear, we will use $t^A_i(M)$
instead of $t^A_i(_A M)$ (or $t^A_i(M_A)$).

Recall that $\beta(A)$ denotes the largest degree of elements 
in a minimal generating set of $A$. This number is independent 
of the choice of the minimal generating set since it is equal 
to the largest degree of elements in the graded vector space 
$\Tor^A_1(\Bbbk, \Bbbk)$, namely,
\begin{equation}
\label{E1.0.5}\tag{E1.0.5}
\beta(A)=t^A_1(\Bbbk)=\deg (\Tor^A_1(\Bbbk, \Bbbk)).
\end{equation}
More generally, for any $i \geq 2$, we define 
\begin{equation}
\label{E1.0.6}\tag{E1.0.6}
\beta_i(A)=t^A_i(\Bbbk)=\deg (\Tor^A_i(\Bbbk,\Bbbk)).
\end{equation}
While $\beta(A)$ gives information about degrees of 
generators of $A$, $\beta_2(A)$ yields information 
about the degrees of the relations of $A$.

Let $H$ be a semisimple Hopf algebra acting on $A$ 
homogeneously. Sometimes we will use $R$ to denote 
the {\it invariant subring}  (or {\it fixed subring})
\begin{equation}
\label{E1.0.7}\tag{E1.0.7}
R:= A^H:=\{a\in A\mid h\cdot a =\varepsilon(h) a\}.
\end{equation}
Since the action of $H$ on $A$ is homogeneous, 
$R$ is a connected graded algebra. We note that 
$\beta(R):=\beta(A^H)$ generally depends on $A$, $H$ 
and the action of $H$ on $A$.

We now define the Hilbert ideal and the $\tau$-saturation 
degree, which were used in the non-modular proofs of the 
Noether bound by Fogarty \cite{Fo} and Fleischmann \cite{Fl}. 
These definitions extend easily to noncommutative algebras. 
In the noncommutative case, the left Hilbert ideal was 
introduced in Gandini's thesis \cite[Definition II.11]{Ga}.

\begin{definition}
\label{xxdef1.1} Retain the above notation.
\begin{enumerate}
\item[(1)]
The {\it left Hilbert ideal} of the $H$-action on $A$ is 
the left ideal of $A$
$$J_H(A):=A R_{\geq 1}.$$
The {\it right Hilbert ideal} of the $H$-action on $A$ is 
the right ideal of $A$
$$J^{\op}_H(A):=R_{\geq 1} A.$$
\item[(2)]
The {\it $\tau$-saturation degree} (or {\it left $\tau$-saturation 
degree}) of the $H$-action on $A$ is defined to be
$$\tau_H(A)=1+\deg(A/J_H(A))=1+t^R_0(A_R) 
\quad \in \quad {\mathbb N} \cup\{\infty\}.$$
The {\it right $\tau$-saturation degree} of the $H$-action 
on $A$ is defined to be
$$\tau^{\op}_H(A)=1+\deg(A/J^{\op}_H(A))=1+t^R_0(_RA) 
\quad \in \quad {\mathbb N} \cup\{\infty\}.$$
\item[(3)]
For every integer $i \geq 0$, the {\it $i$th annihilator ideal} 
of the $H$-action on $A$ is defined to be the (two-sided) ideal 
of $A$
$$J_{H,i}(A):=\ann (_A \Tor^R_i(A_R,\Bbbk))$$
and let
$$J_{\infty}=\bigcap_{i\geq 0} J_{H,i}.$$
\end{enumerate}
\end{definition}

It is easy to see that $J_{H,0}(A)\subseteq J_H(A)$ and 
$\deg A/J_{H,0}(A)=\deg A/J_H(A)$. It is not known if 
$\tau_H(A)=\tau^{\op}_H(A)$ in general (see Question 
\ref{xxque6.1}). 

\begin{example}
\label{xxex1.2}
\begin{enumerate}
\item[(1)]
Suppose that $\Bbbk$ contains a primitive $m$th root of 
unity $\omega$ and that $A(\neq \Bbbk)$ is generated in 
degree 1. Define the map $\sigma(a) = \omega a$ for all 
$a \in A_1$ and extend it to an automorphism of $A$. Then the 
invariant subring $A^{\langle \sigma \rangle}$ is the 
$m$th Veronese ring of $A$ so the left Hilbert ideal 
$J_{\langle \sigma \rangle}(A)$ is zero in degrees $< m$ 
and equal to $A$ in degrees $\geq m$. As a result, 
$\tau_{\langle \sigma \rangle}(A) = m$. If $A$ is a domain,
then $\beta (A^{\langle \sigma \rangle})=m$. Similarly, 
$\tau^{\op}_{\langle \sigma \rangle}(A) = m$.
\item[(2)]
\cite[Example 5.4(c)]{KKZ1}
Let $T$ be the Rees ring of the first Weyl algebra with 
respect to the standard filtration. So $T$ is generated 
by $x$, $y$, and $z$ subject to the relations
$$xy-yx = z^2, \quad {\text{$z$ is central.}}$$
Let $\sigma$ be the automorphism of $T$ determined by
$$\sigma(x) = x, \quad \sigma(y) = y, \quad {\text{and}} \quad 
\sigma(z) = -z.$$
By \cite[Example 5.4(c)]{KKZ1}, $T^{\langle \sigma \rangle}$
is generated in degree 1. In this case 
$$\beta(T^{\langle \sigma \rangle})=1<2 =
\tau_{\langle \sigma \rangle}(T).$$
Note that \cite[Conjecture 0.3]{FKMW1} fails as 
$T^{\langle \sigma\rangle}$ is AS regular and the product of 
the degrees of a homogeneous minimal generating set is 1 and 
$\dim \Bbbk \langle \sigma\rangle=2$. 
\item[(3)]
\cite[Example 3.1]{KKZ6}
Let $T$ be the $(-1)$-skew polynomial ring $\Bbbk_{-1}[x_1,
x_2]$ and $\sigma$ be the automorphism of $T$ exchanging 
$x_1$ and $x_2$. Let $G=\langle \sigma\rangle$. By 
\cite[Example 3.1]{KKZ6}, $T^G$ is generated by $x_1+x_2$
and $x_1^3+x_2^3$, so $\beta(T^G)=3>2=|G|$. One can easily 
check that $\tau_{G}(T)=\beta(T^G)$ in this case.
\end{enumerate}
\end{example}

We remark that $\tau_H(A)$ (and $\tau^{\op}_H(A)$) need 
not be finite when $A$ is not noetherian, as the next 
example shows.

\begin{example}
\label{xxex1.3} 
Let $A$ be the free algebra generated by two elements, 
say $x$ and $y$, in degree 1. Let $G$ be the group
${\mathbb Z}/(2)=\{ \id, \sigma \}$ and define an action 
of $G$ on $A$ by $$ \sigma(x)=-x, \quad \sigma(y)=y.$$
It is clear that the invariant subring $A^G$ is generated
by a minimal generating set of homogeneous elements
$$\{y, x^2, xyx, xy^2x, xy^3x, \cdots, xy^n x, \cdots\}.$$ 
In particular, $\beta(A^G)=\infty$. By a general result 
in Section \ref{xxsec3} (Corollary \ref{xxcor3.3}), 
one sees that $\tau_G(A)\geq \beta(A^G)=\infty$, but
we can check this  directly. Note that $A$ is an infinitely
generated right $A^G$-module with a minimal generating set
$$\Phi:=\{1, x, yx, y^2x, y^3x, y^4x, \cdots\}.$$
To see this, observe that every homogeneous element 
can be written as $y^n f$ where $f$ contains an even 
number of $x$'s or $y^n x f$ where $f$ contains an even 
number of $x$'s. Thus $\Phi$ generates $A$ as a right 
$A^G$-module. Further, every $y^n x$ cannot be written 
as an element in $A^G+\sum_{i=0}^{n-1} y^i x A^G$. 
Therefore $\Phi$ is a minimal generating set of 
the right $A^G$-module $A$. This implies that 
$A\otimes_{A^G} \Bbbk$ is infinite-dimensional, 
whence, $\tau_G(A)=\infty$. Similarly, 
$\tau^{\op}_G(A)=\infty$.
\end{example}

We are mainly interested in noetherian algebras, and 
in that case the $\tau$-saturation degree is always 
finite.

\begin{lemma}
\label{xxlem1.4} 
Retain the above notation. Suppose that $A$ is noetherian.
\begin{enumerate}
\item[(1)]
Both $t^R_i(A_R)$ and $t^R_i(_RA)$ are finite for all 
$i\geq 0$. 
\item[(2)]
Both $\tau_H(A)$ and $\tau^{\op}_H(A)$ are finite.
\end{enumerate}
\end{lemma}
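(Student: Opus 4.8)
The plan is to deduce both statements from two structural facts about the extension $R := A^H \subseteq A$: that $R$ is again connected graded noetherian, and that $A$ is module-finite over $R$ on both the left and the right. Granting these, part (1) follows from the standard homological principle that over a connected graded noetherian algebra $R$ a finitely generated graded module has $\Tor$'s of finite degree in every homological slot. Part (2) is then immediate, because by Definition~\ref{xxdef1.1} one has $\tau_H(A) = 1 + t^R_0(A_R)$ and $\tau^{\op}_H(A) = 1 + t^R_0({}_R A)$, which are the $i = 0$ instances of part (1).

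Both structural facts would be extracted from the Reynolds operator supplied by semisimplicity of $H$. First I would fix a two-sided integral $\Lambda \in H$ normalized so that $\varepsilon(\Lambda) = 1$ (available since $H$ is semisimple) and set $\rho := \Lambda \cdot (-) : A \to A$. Using the module-algebra axioms together with $h\Lambda = \varepsilon(h)\Lambda = \Lambda h$, one checks that $\rho$ lands in $R$, restricts to the identity on $R$, and satisfies $\rho(rar') = r\,\rho(a)\,r'$ for all $r,r' \in R$ and $a \in A$; thus $\rho$ is an idempotent $R$-bimodule map, and $R$ is an $R$-bimodule direct summand of $A$. Noetherianity of $R$ then follows by the usual contraction argument: for a right ideal $I \subseteq R$ one has $IA \cap R = I$ (apply $\rho$ to an element of $IA \cap R$ and use left $R$-linearity), so any ascending chain of right ideals of $R$ expands to an ascending chain of right ideals of $A$ which is recovered by contraction; hence $R$ is right noetherian, and symmetrically left noetherian.

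Showing that $A$ is module-finite over $R$ is the crux. By the graded Nakayama lemma this is equivalent to finite-dimensionality of $A \otimes_R \Bbbk = A/J_H(A)$ on the right and of $\Bbbk \otimes_R A = A/J^{\op}_H(A)$ on the left, i.e. to the very quotients appearing in part (2), so this step carries the real weight of the lemma. Here I would invoke the finiteness theorem for semisimple Hopf actions on noetherian algebras — the noncommutative analogue of Hilbert--Noether finiteness, asserting that $A$ is integral over $A^H$ of bounded degree and hence module-finite (see \cite{Mo}). The main obstacle is precisely that the classical Hilbert argument does not transcribe verbatim: since $\rho$ is only $R$-bilinear and not $R$-central, one cannot pull an invariant out of the middle of a product $a f b$ after averaging, so the reduction to finitely many generators must be routed through an integrality statement rather than the symmetric-function trick available for group actions. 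Once module-finiteness is in hand, a minimal graded free resolution $\cdots \to F_1 \to F_0 \to A \to 0$ of $A_R$ by finitely generated graded free right $R$-modules exists (using that $R$ is noetherian), and minimality gives $\Tor^R_i(A, \Bbbk) \cong F_i \otimes_R \Bbbk$, which is finite-dimensional; hence $t^R_i(A_R) < \infty$, and the left-module statement $t^R_i({}_R A) < \infty$ follows identically, completing (1) and therefore (2).
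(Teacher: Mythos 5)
Your proposal is correct and follows essentially the same route as the paper: both reduce part (2) to the $i=0$ case of part (1), obtain the two structural facts --- $R:=A^H$ noetherian and $A$ finitely generated over $R$ on both sides --- from Montgomery's book \cite{Mo} (the paper cites Corollary 4.3.5 and Theorem 4.4.2 there), and then conclude by tensoring a minimal free resolution of $A_R$ by finitely generated free $R$-modules with $\Bbbk$. The only difference is cosmetic: you re-prove the noetherianity of $R$ directly via the Reynolds operator and ideal contraction where the paper simply cites it, while the genuinely hard step (module-finiteness of $A$ over $R$) is delegated to \cite{Mo} in both arguments.
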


\begin{proof}
Part (2) is a special case part (1), so we only need to prove 
part (1). By symmetry, it suffices to show that 
$t^R_i(A_R)$ is finite for all $i\geq 0$. 

By \cite[Corollary 4.3.5 and Theorem 4.4.2]{Mo}, $R:=A^H$ 
is noetherian, and $A$ is a finitely generated module over 
$R$ on both sides. Since $R$ is noetherian, every term in 
the minimal free resolution $P^{\bullet}$ of $A_R$ is a 
finitely generated free right $R$-module. Thus, each term 
in $P^{\bullet} \otimes_R \Bbbk$ is finite-dimensional.
Consequently, $\Tor^R_i(A,\Bbbk)$ is finite-dimensional and 
the assertion follows.
\end{proof}

\begin{proposition}
\label{xxpro1.5}
Let $A$ be a noetherian domain and let $H$ be a semisimple 
Hopf algebra acting on $A$ homogeneously. Suppose that $A\# H$ 
is prime. Then $A$ is a finitely generated left and right
$A^H$-module of rank equal to $\dim H$.

If, in addition, $A$ and $A^H$ are AS regular, then $A$ is a 
free $A^H$-module.
\end{proposition}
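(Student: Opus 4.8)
The plan is to prove the two assertions in turn, reducing the rank computation to a statement about graded division rings of fractions and deriving freeness from a graded Auslander--Buchsbaum argument. Finite generation is exactly the input already invoked for Lemma~\ref{xxlem1.4}: by \cite[Corollary 4.3.5 and Theorem 4.4.2]{Mo}, $R:=A^H$ is noetherian and $A$ is a finitely generated left and right $R$-module. Since $A$ is a noetherian graded domain, it has a graded division ring of fractions $D:=Q_{\gr}(A)$, obtained by inverting its nonzero homogeneous elements, and likewise $R$ has $E:=Q_{\gr}(R)$. I would first observe that the nonzero homogeneous elements of $R$ already form a two-sided Ore set in $A$ whose localization is all of $D$: because $A$ is module-finite over $R$, this localization is a domain that is finite-dimensional over the graded division ring $E$, hence is itself a graded division ring and must equal $D$. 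Consequently the left and right ranks of $A$ over $R$ both equal $[D:E]$, so it remains to prove $[D:E]=\dim H$.

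The heart of the argument is this rank computation, which I would carry out via the double centralizer theorem. A Sweedler computation shows that each $r\in R$ commutes with $H$ inside $A\# H$ (one has $(1\# h)(r\# 1)=\sum(h_1\cdot r)\# h_2=\sum\varepsilon(h_1)\,r\# h_2=r\# h$), so the homogeneous elements of $R$ are a regular Ore set localizing $A\# H$ to $D\# H$. As a localization of the prime ring $A\# H$ at regular elements, $D\# H$ is prime, and being finite-dimensional over the division ring $D$ it is simple Artinian. Now $M:=D$ is a left $D\# H$-module, with $D$ acting by left multiplication and $H$ by its action; its submodules are the $H$-stable left ideals of $D$, so $M$ is simple and faithful, and a direct check (using a normalized integral of $H$ to clear denominators) gives $D^H=E$ and identifies $\End_{D\# H}(M)$ with right multiplication by $E$, i.e.\ with $E^{\op}$. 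The double centralizer theorem then yields an isomorphism $D\# H\xrightarrow{\ \sim\ }\End_E(D_E)$ given by the action. The decisive bookkeeping point is that this isomorphism is \emph{left $D$-linear}; choosing a right $E$-basis of $D$ identifies $\End_E(D_E)\cong D^{[D:E]}$ as left $D$-modules, whence $\dim_D(D\# H)=[D:E]$. Since $D\# H\cong D\otimes_\Bbbk H$ is free of rank $\dim H$ as a left $D$-module, this gives $[D:E]=\dim H$, and the mirror-image argument handles the right-hand side.

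For the freeness statement, assume in addition that $A$ and $R$ are AS regular, of common GK-dimension $d$. Since $R$ is AS regular it is a connected graded noetherian algebra of finite global dimension that is Cohen--Macaulay, so the graded Auslander--Buchsbaum equality $\pdim_R A+\depth_R A=\depth_R R=d$ applies to the finitely generated graded $R$-module $A$. I would compute $\depth_R A=d$ as follows: because $A$ is module-finite over $R$, the ideals $R_{\geq 1}A$ and $A_{\geq 1}$ have the same radical in $A$ (indeed $A/R_{\geq 1}A=A\otimes_R\Bbbk$ is finite-dimensional), so $H^i_{R_{\geq 1}}(A)=H^i_{A_{\geq 1}}(A)$ for all $i$; and $A$ being AS Gorenstein forces $H^i_{A_{\geq 1}}(A)=0$ for $i\neq d$, so $\depth_R A=d$. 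Hence $\pdim_R A=0$, so $A$ is projective over $R$, and a finitely generated graded projective module over a connected graded ring is free.

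I expect the main obstacle to lie in the middle paragraph: making the passage to fractions fully rigorous---that the homogeneous part of $R$ is an Ore set in both $A$ and $A\# H$, that $D^H=E$, and that $D\# H$ is genuinely the localization $(S^{-1}A)\# H$---and then running the double centralizer count with the correct handedness, so that the left $D$-dimension reads off $\dim H$ rather than a power of it.
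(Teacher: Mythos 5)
There is a genuine gap, and it sits exactly where you predicted trouble but is more serious than a bookkeeping issue: you treat the \emph{graded} quotient ring $D=Q_{\gr}(A)$ as if it were a division ring. It is only a graded division ring (nonzero \emph{homogeneous} elements are invertible), and unless $A$ is finite-dimensional it is neither artinian nor free of non-graded one-sided ideals. Concretely, take $A=\Bbbk[x]$ with ${\mathbb Z}/2$ acting by $x\mapsto -x$ (here $A\#\Bbbk({\mathbb Z}/2)$ is prime, so the hypotheses hold). Then $D=\Bbbk[x^{\pm 1}]$, and $D\#\Bbbk({\mathbb Z}/2)$ is a finite module over its central subring $\Bbbk[x^{\pm 2}]$, hence is neither artinian nor simple: the invariant central element $1+x^2$ generates a proper nonzero two-sided ideal. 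Likewise $(1+x^2)D$ is a proper nonzero $H$-stable left ideal of $D$, so your module $M=D$ is \emph{not} simple over $D\#H$ (only graded-simple). Thus both inputs to your double centralizer step --- ``$D\#H$ is simple Artinian'' and ``$M$ is simple and faithful'' --- fail, and the isomorphism $D\#H\cong \End_E(D_E)$, which is the heart of the rank count, is unjustified. This is precisely why the paper's proof works with the \emph{ungraded} Goldie quotient division ring $Q$ of the noetherian domain $A$: there $Q\#H$ is finite-dimensional over a genuine division ring, hence artinian, and it is the artinian quotient ring of the prime ring $A\#H$, hence simple; the equality $[Q:Q^H]_l=[Q:Q^H]_r=\dim H$ is then exactly \cite[Corollary 3.10]{CFM}, and the identification $Q^H=Q(A^H)$ is supplied by Skryabin \cite[Theorem 4.3(iii)]{Sk}. (Freeness in the paper is just a citation of \cite[Lemma 1.10(a,b)]{KKZ1}; your Auslander--Buchsbaum argument for that part is the standard one and is essentially sound, modulo conflating GK-dimension with injective dimension/depth.)

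The good news is that your strategy can be repaired with a one-word change: run it with $Q$ instead of $D$. Your Ore argument survives: for $s\in R\setminus\{0\}$, left multiplication gives $sA\cong A$ as right $R$-modules, so $A/sA$ is a finitely generated torsion right $R$-module, which is the right Ore condition for $R\setminus\{0\}$ in $A$; the localization $A(R\setminus\{0\})^{-1}$ is then a domain finite over the division ring $Q(R)$, hence a division ring, hence equals $Q$. In particular every element of $Q$ has a denominator in $R$, so your denominator-clearing computation of invariants goes through and yields $Q^H=Q(R)$ \emph{without} citing Skryabin --- a genuine simplification in this module-finite setting. With $Q$ in place, $Q\#H$ really is simple artinian, $Q$ really is a simple faithful module with endomorphism ring $Q(R)^{\op}$, and your left-$Q$-linear double centralizer count gives $[Q:Q(R)]=\dim H$ as intended; this in effect reproves \cite[Corollary 3.10]{CFM} rather than citing it. If instead you insist on the graded route, you would need graded analogues throughout (graded-simple, graded-artinian, a graded density/double centralizer theorem), which is considerably more machinery than your write-up acknowledges.
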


\begin{proof} 
By \cite[Corollary 4.3.5 and Theorem 4.4.2]{Mo}, $R:=A^H$ 
is noetherian, and $A$ is a finitely generated module 
over $R$ on both sides.

Let $Q$ be the quotient division ring of $A$. Since $H$ 
acts on $A$, this action extends naturally to an $H$ action on $Q$.
Further, $Q\# H$ is the artinian quotient ring of the prime ring $A \# H$
and so $Q\# H$ is simple. By \cite[Corollary 3.10]{CFM}, 
$$[Q:Q^H]_{r}=[Q:Q^H]_{l}=\dim H.$$ 
By \cite[Theorem 4.3(iii)]{Sk}, $Q^H=Q(A^H)$. Since 
the (left) rank of $A$ over $A^H$ is equal to the 
(left) rank of $Q$ over $Q^H$, the assertion follows. 

If $A$ and $A^H$ are AS regular, then by 
\cite[Lemma 1.10(a, b)]{KKZ1}, $A$ is a finitely 
generated free module over $R$ on both sides. 
\end{proof}

See \cite[Lemma 3.10]{BHZ} for a condition when 
$A\# H$ is prime.

In Corollary~\ref{xxcor3.3} in Section \ref{xxsec3}, we will 
see that the $\tau$-saturation degree of the $H$-action 
on $A$ gives a bound on $\beta(A^H)$, so it is 
important to bound the $\tau$-saturation degree. Next is an 
easy example.

\begin{example}
\label{xxex1.6}
Let $T$ be a noetherian AS regular domain generated 
in degree 1. Suppose that $T^H$ has finite global
dimension and that $T\# H$ is prime. By \cite[Lemmas 1.10(c) and 1.11(b)]{KKZ1}, $T^H$ is AS regular. Hence,
by Proposition \ref{xxpro1.5}, $T$ is 
a free module over $R:=T^H$ of rank equal to $d:=\dim H$.
Then $M:=T\otimes_{R} R/R_{\geq 1}$ has $\Bbbk$-dimension 
$d$. As a left $T$-module, $M$ is generated by the element 
$1$ of degree 0. Since $T$ is generated in degree 1, 
$M_i\neq 0$ for all $i$ between 0 and 
$\sup\{j\mid  (T/J_H(T))_{j}\neq 0\}$.
So $\sup\{j\mid  (T/J_H(T))_{j}\neq 0 \}\leq d-1$ 
as $\dim M=d$. As a consequence, $\tau_H(T)\leq d$.
In summary, 
$$\tau_H(T)\leq \dim H.$$
Similarly, we have $\tau^{\op}_H(T)\leq \dim H$.
\end{example}

The Shephard--Todd--Chevalley Theorem describes the 
invariant subring of a commutative polynomial ring under 
the action of a reflection group; it shows that the 
invariant subring has finite global dimension, and the 
product of the degrees of the minimal generating invariants 
is equal to the order of the group. A similar phenomenon was observed 
in examples obtained in the noncommutative setting for group 
actions \cite{KKZ2} and Hopf actions \cite{FKMW1,FKMW2} where 
the invariant subrings have finite global dimension. We 
conclude this section with Proposition \ref{xxpro1.8}; 
parts (3) and (4) of  this proposition provides a generalization of 
Proposition \ref{xxpro1.5}, and describe some conditions
under which the product of the degrees of the minimal generators 
of the invariant subring is equal to the dimension of the 
Hopf algebra. It provides a partial answer to a variation of
\cite[Conjecture 0.3]{FKMW1}.

\begin{definition}
\label{xxdef1.7}
Let $A$ be a locally-finite, connected graded algebra 
$A:=\bigoplus_{i\geq 0} A_i$.
The {\it Hilbert series of $A$} is defined to be
\begin{equation*}
h_A(t)=\sum_{i\in {\mathbb N}} (\dim_{\Bbbk} A_i)t^i.
\end{equation*}
Similarly, if $M$ is a $\mathbb{Z}$-graded $A$-module 
(or $\mathbb{Z}$-graded vector space) 
$M = \bigoplus_{i \in \mathbb{Z}} M_i$, the 
\emph{Hilbert series of $M$} is defined to be
\begin{equation*}
h_M(t)=\sum_{i\in \mathbb{Z}} (\dim_{\Bbbk} M_i)t^i.
\end{equation*}
\end{definition}
The {\it Gelfand--Kirillov dimension} (or {\it GK dimension}) 
of a connected graded, locally-finite
algebra $A$ is defined to be
\begin{equation}
\label{E1.7.1}\tag{E1.7.1}
\GKdim (A)=\limsup_{n\to\infty} 
\frac{\log (\sum_{i=0}^{n} \dim_{\Bbbk} A_i)}{\log(n)},
\end{equation}
see \cite[Chapter 8]{MR}, \cite{KL}, or \cite[p.1594]{StZ}.

\begin{proposition}
\label{xxpro1.8}
Let $A$ be a noetherian connected graded domain and $H$ 
be a semisimple Hopf algebra acting 
on $A$ homogeneously. Assume that $A\# H$ is prime. 
\begin{enumerate}
\item[(1)]
Suppose the Hilbert series $p(t):=h_A(t)$ and $q(t):=
h_{A^H}(t)$ are rational functions. Then 
$$\big( p(t)/q(t)\big)\mid_{t=1}=\dim H.$$
\item[(2)]
If $\displaystyle h_A(t)=\frac{1}{\prod_{i=1}^n (1-t^{a_i})}$ 
and 
$\displaystyle h_{A^H}(t)=\frac{1}{\prod_{i=1}^n (1-t^{b_i})}$, 
then 
$$\prod_{i=1}^n b_i=(\dim H) \prod_{i=1}^{n} a_i.$$
\item[(3)]
If $\displaystyle h_A(t)=\frac{1}{(1-t)^n}$,
$\displaystyle h_{A^H}(t)=
\frac{1}{\prod_{i=1}^n (1-t^{b_i})}$, and 
there is a  minimal generating set 
$\{x_i\}_{i=1}^n$ of $A^H$ with $\deg x_i=b_i$, then 
$$\prod_{i=1}^n \deg x_i=\dim H.$$
\item[(4)]
Suppose that $\displaystyle h_A(t)=\frac{1}{(1-t)^n}$
and that $A^H$ is a commutative polynomial ring 
generated by a minimal homogeneous generating set 
$\{x_i\}_{i=1}^n$, then 
$$\prod_{i=1}^n \deg x_i=\dim H.$$
\end{enumerate}
\end{proposition}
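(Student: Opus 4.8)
The whole proposition is powered by Proposition~\ref{xxpro1.5}, so the plan is to extract part~(1) from it and then read off parts~(2)--(4) as successive specializations. Proposition~\ref{xxpro1.5} tells us that $R:=A^H$ is noetherian and that $A$ is a finitely generated right (and left) $R$-module of rank exactly $\dim H$; this rank statement is the only input I will need.

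To prove part~(1) I would first observe that, since $A$ is module-finite over $R$ and contains it, both algebras share the same GK dimension $n:=\GKdim A=\GKdim R$, which --- as $p$ and $q$ are assumed rational --- is precisely the order of the pole of each of $p(t),q(t)$ at $t=1$. The idea is then to sandwich $A$ between a free $R$-module and a torsion module. Let $Q$ denote the Ore quotient division ring of the noetherian graded domain $R$, so that $A\otimes_R Q\cong Q^{r}$ with $r=\dim H$. I would pick homogeneous $a_1,\dots,a_r\in A$ whose images form a $Q$-basis; the right submodule $F:=\sum_{j=1}^{r}a_jR$ is then free, since any relation $\sum_j a_j s_j=0$ with $s_j\in R$ becomes a $Q$-linear dependence on tensoring with $Q$ and so forces every $s_j=0$. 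Thus $F\cong\bigoplus_{j=1}^{r}R(-d_j)$ with $d_j=\deg a_j$, and the short exact sequence $0\to F\to A\to A/F\to 0$ yields, by additivity of Hilbert series,
\[
p(t)=\Big(\sum_{j=1}^{r}t^{\,d_j}\Big)q(t)+h_{A/F}(t).
\]
Because $F$ and $A$ both have rank $r$, the quotient $A/F$ is a finitely generated torsion $R$-module, hence $\GKdim(A/F)<n$ and $h_{A/F}(t)$ has a pole at $t=1$ of order strictly below $n$. Dividing by $q(t)$ and letting $t\to1$, the first term tends to $\sum_{j=1}^{r}1=r$ while $h_{A/F}(t)/q(t)\to0$, which gives $\big(p(t)/q(t)\big)\mid_{t=1}=r=\dim H$, as wanted.

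Parts~(2)--(4) should then be purely formal. For part~(2) I would compute $p(t)/q(t)=\prod_i(1-t^{b_i})/\prod_i(1-t^{a_i})$, cancel the $n$ common factors of $(1-t)$ top and bottom, and use $\lim_{t\to1}(1-t^{c})/(1-t)=c$ to get the limit $\prod_i b_i/\prod_i a_i$; equating this to $\dim H$ from part~(1) gives $\prod_i b_i=(\dim H)\prod_i a_i$. Part~(3) is the case $a_i=1$ (so $\prod_i a_i=1$) combined with the hypothesis $\deg x_i=b_i$, and part~(4) is the instance of part~(3) where $A^H=\Bbbk[x_1,\dots,x_n]$, whose Hilbert series is $\prod_{i=1}^{n}(1-t^{\deg x_i})^{-1}$ --- the generator count $n$ matching because $\GKdim A^H=\GKdim A=n$ --- so that $b_i=\deg x_i$. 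The one genuine obstacle is the claim that the torsion module $A/F$ drops GK dimension (equivalently, pole order) strictly below $n$; everything else is bookkeeping with rational functions. This is classical in the commutative case, and in the present noncommutative setting it is a standard property of GK dimension for finitely generated modules over noetherian connected graded domains \cite{KL, MR}, which I would invoke rather than reprove.
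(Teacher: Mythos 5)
Your proof is correct and follows essentially the same route as the paper's: both invoke Proposition~\ref{xxpro1.5} for the rank, produce a free graded $A^H$-submodule of $A$ of rank $\dim H$ with torsion quotient of strictly smaller GK dimension, and compare pole orders of Hilbert series at $t=1$ (the paper works with the graded total quotient ring and multiplies by $(1-t)^n$ before evaluating, rather than dividing by $q(t)$ and taking a limit, but this is cosmetic). Your parts (2)--(4) are exactly the formal specializations the paper summarizes as ``clearly a consequence,'' with the helpful extra remark that $\GKdim A^H = \GKdim A$ forces the generator count in part (4) to be $n$.
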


\begin{proof}
(1) By Proposition~\ref{xxpro1.5}, $A$ is an $A^H$-module of 
rank $d := \dim H$ on both sides. Let $n$ be the GK dimension 
of $A$ (and equal to the GK dimension of $A^H$). Then, by 
\cite[Corollary 2.2]{StZ}, 
$$p(t)=p_1(t) (1-t)^{-n}, \quad {\text{and}} 
\quad 
q(t)=q_1(t) (1-t)^{-n}$$
such that $p_1(1)q_1(1)\neq 0$. 

Let $D(A)$ denoted the graded total quotient ring
of $A$. Then $D(A)$ is free over $D(A^H)$ of rank 
$d$ as $D(A^H)$ is a graded division ring. 
Then there is a right graded free $A^H$-submodule of
$A$ of rank $d$, say $M$,  such that 
$A/M$ is a torsion graded right $A^H$-module. Then  
$\GKdim A/M< \GKdim A^H=n$, and 
by \cite[Corollary 2.2]{StZ}, 
$$h_{A/M}(t)=h(t) (1-t)^{-m}$$
with $m<n$ and $h(1)\neq 0$. This implies that
$$\big( (1-t)^{n} h_{A/M}(t)\big) \mid_{t=1}=0.$$
Since $h_{A/M}(t)=h_A(t)-h_{M}(t)$, we have 
\begin{equation}
\label{E1.8.1}\tag{E1.8.1}
\big( (1-t)^{n} h_{A}(t)\big) \mid_{t=1}
=\big( (1-t)^{n} h_{M}(t)\big) \mid_{t=1}.
\end{equation}
The left-hand side of \eqref{E1.8.1} is $p_1(1)$. 
Since $M$ is free of rank $d$, there are integers 
$n_1,\cdots, n_d$ such that 
$$h_{M}(t)=(t^{n_1}+\cdots +t^{n_d}) h_{A^H}(t).$$
Then the right-hand side of \eqref{E1.8.1} is
$d q_1(1)$. As a consequence, $p_1(1)=dq_1(1)$. Now
$$\big( p(t)/q(t) \big)\mid_{t=1} 
=\big( p_1(t)/q_1(t) \big)\mid_{t=1} 
=p_1(1)/q_1(1)=d=\dim H.$$

Part (2) is clearly a consequence of part (1). 
Part (3) is clearly a consequence of part (2). 
Part (4) is clearly a consequence of part (3).
\end{proof}

We remark that the preceding result is not
true if we remove the hypothesis that $H$ is semisimple \cite[Observation 4.1(4)]{CWZ}.

\section{Preliminaries on local cohomology and CM regularity}
\label{xxsec2}

\subsection{Local cohomology}
\label{xxsec2.1}
In this subsection we will review some basic ideas related to graded 
modules, local cohomology, balanced dualizing complexes, and other concepts that will be used in discussing 
Castelnuovo--Mumford regularity. 
The definition of the Hilbert series $h_M(t)$ of a module $M$ was 
given in Definition~\ref{xxdef1.7}. If $M$ is locally-finite 
bounded above, then $h_M(t)$ is in $\Bbbk((t^{-1}))$. If $M$ is 
locally-finite bounded below, then $h_M(t)$ is in $\Bbbk((t))$. 
In both cases, some special $h_M(t)$ can be written as rational 
functions of $t$. 

\begin{definition}[{\cite[Definition 3.6.13]{BH}}]
\label{xxdef2.1} 
Let $M$ be nonzero locally-finite and either bounded above or 
bounded below. Suppose $h_M(t)$ is equal to a rational function, 
considered as an element in $\Bbbk ((t^{-1}))$ or in 
$\Bbbk ((t))$. The {\it $a$-invariant} of $M$, denoted by 
$a(M)$, is defined to be the $t$-degree of the rational 
function $h_M(t)$, namely, $a(M)=\deg_t h_{M}(t).$
\end{definition}

\begin{example}
\label{xxex2.2} 
\begin{enumerate}
\item[(1)]
If $M$ is finite-dimensional, then $a(M)=\deg(M)$. 
A more general case is considered in part (3).
\item[(2)]
Let $A$ be the commutative polynomial ring 
$\Bbbk[t_1,\cdots,t_n]$ with $\deg (t_i)=1$ for all
$i$. Then $h_{A}(t)=\frac{1}{(1-t)^{n}}$. 
Therefore, $a(A)=-n$.
\item[(3)]
If $M$ is bounded above and $h_M(t)$ is a rational function,
then one can check that
\begin{equation}
\label{E2.2.1}\tag{E2.2.1}
a(M)=\deg_t h_M(t)=\deg M.
\end{equation}
\end{enumerate}
\end{example}

Local cohomology is an important tool in this paper. Let 
$A$ be a locally-finite ${\mathbb N}$-graded algebra and 
let $\fm$ denote the graded ideal $A_{\geq 1}$. Let 
$A\mhyphen\GrMod$ denote the category of ${\mathbb Z}$-graded 
left $A$-modules. For each graded left $A$-module $M$, we define
$$\Gamma_{\fm}(M) 
=\{ x\in M\mid A_{\geq n} x=0 \; {\text{for some $n\geq 1$}}\;\}
=\lim_{n\to \infty} \Hom_A(A/A_{\geq n}, M)$$
and call this the {\it $\fm$-torsion submodule} of $M$. It 
is standard that the functor $\Gamma_{\fm}(-)$ is a left 
exact functor from $A\mhyphen\GrMod \to A\mhyphen\GrMod$. 
Since this category has enough injectives, the $i$th right 
derived functors, denoted by $H^i_{\fm}$ or $R^i\Gamma_{\fm}$, 
are defined and called the {\it local cohomology functors}. 
Explicitly, one has 
$$H^i_{\fm}(M)=R^i\Gamma_{\fm}(M)
:=\lim_{n\to \infty} \Ext^i_A(A/A_{\geq n}, M).$$ 
See \cite{AZ, VdB} for more details.

If $M$ is a left (or right) 
$A$-module, let $M'$ denote the graded $\Bbbk$-linear dual of 
$M$, where
\[ (M')_i = \Hom_{\Bbbk} (M_{-i}, \Bbbk).
\]

\begin{definition}
\label{xxdef2.3} 
Let $A$ be a locally-finite noetherian ${\mathbb N}$-graded
algebra. Let $M$ be a finitely generated graded left 
$A$-module. We call $M$ {\it $s$-Cohen--Macaulay} or simply 
{\it Cohen--Macaulay} if $H^i_{\fm}(M) = 0$ for all $i \neq s$ 
and $H^s_{\fm}(M) \neq 0$.
\end{definition}

The noncommutative version of a dualizing complex was introduced 
in 1992 by Yekutieli \cite{Ye1}. Roughly speaking, a dualizing 
complex over a connected graded algebra $A$ is a complex $R$ of 
graded $A$-bimodules, such that the two derived functors 
$\RHom_A(-,R)$ and $\RHom_{A^{\op}} (-,R)$ induce a duality between 
derived categories ${\mathbb{D}}^b_{f.g.}(A\mhyphen\GrMod)$ and 
${\mathbb{D}}^b_{f.g.}(A^{\op}\mhyphen\GrMod)$. Let $A^e$ denote 
the enveloping algebra $A\otimes A^{\op}$.

\begin{definition}[{\cite[Definition 3.3]{Ye1}}]
\label{xxdef2.4} 
Let $A$ be a noetherian connected graded algebra. A complex 
$R\in {\mathbb D}^b(A^{e}\mhyphen\GrMod)$ is called a {\it dualizing 
complex} over $A$ if it satisfies the three conditions below:
\begin{enumerate}
\item[(i)]
$R$ has finite graded injective dimension on both sides.
\item[(ii)]
$R$ has finitely generated cohomology modules on both sides.
\item[(iii)]
The canonical morphisms $A\to \RHom_{A}(R,R)$ and 
$A\to \RHom_{A^{\op}}(R,R)$ in ${\mathbb D}(A^{e}\mhyphen\GrMod)$
are both isomorphisms.
\end{enumerate}
\end{definition}

Note that  local cohomology  can be 
defined for complexes (see e.g. \cite{Jo1}). 
The following concept will be important in what follows. 

\begin{definition}[{\cite[Definition 4.1]{Ye1}}]
\label{xxdef2.5} 
Let $A$ be a noetherian connected graded algebra and 
let $R$ be a dualizing complex over $A$. We say 
that $R$ is a {\it balanced dualizing complex} if 
$$R\Gamma_{\fm} (R) \cong A'$$
in ${\mathbb D}(A^{e}\mhyphen\GrMod)$. 
\end{definition}

We recall a result of \cite{KKZ3}.

\begin{lemma}[{\cite[Lemma 3.2(b)]{KKZ3}}]
\label{xxlem2.6}
Assume Hypothesis \ref{xxhyp0.3}. Then
\begin{enumerate}
\item[(1)]
$T^H$ admits a balanced dualizing complex. 
\item[(2)]
$T^H$ is $d$-Cohen--Macaulay where $d=\injdim T$.
\end{enumerate}
\end{lemma}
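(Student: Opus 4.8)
The plan is to transfer both properties from $T$ to $R:=T^H$ along the finite extension $R\subseteq T$, using two structural inputs. First, by \cite[Corollary 4.3.5 and Theorem 4.4.2]{Mo}, $R$ is noetherian and $T$ is a finitely generated $R$-module on both sides. Second, since $H$ is semisimple it carries a normalized integral, which produces a Reynolds operator $\pi\colon T\to R$ that is an $(R,R)$-bimodule projection splitting the inclusion $R\hookrightarrow T$; in particular $R$ is a direct summand of $T$ as a left (and as a right) $R$-module. The third ingredient is that the AS Gorenstein hypothesis makes $T$ well behaved: by Van den Bergh \cite{VdB}, $T$ has a balanced dualizing complex (a shift and twist of $T$), $T$ is $d$-Cohen--Macaulay, and $\cd\Gamma_{\fm}=d$ over $T$. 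Underlying everything is the standard independence-of-base isomorphism $H^i_{\fm}(P)$ computed over $R$ versus over $T$, which agree for every graded $T$-module $P$ restricted to $R$, because module-finiteness of $T$ over $R$ forces the $\fm_R$-torsion and $\fm_T$-torsion submodules to coincide; concretely, $T_{\geq N}\subseteq T R_{\geq 1}$ for $N\gg 0$, and an induction on the exponent gives $\fm_T^{m}x=0$ if and only if $\fm_R^{n}x=0$ up to adjusting the exponents.

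For part (1) I would verify Van den Bergh's existence criterion for $R$, namely that $R$ and $R^{\op}$ satisfy $\chi$ and that $\Gamma_{\fm}$ has finite cohomological dimension over $R$ \cite{VdB}. For the cohomological dimension bound, take any graded $R$-module $N$; the bimodule splitting makes $N$ a left-$R$-module direct summand of the $T$-module $T\otimes_R N$, so by independence of base $H^i_{\fm}(T\otimes_R N)$ computed over $R$ equals the value over $T$, which is $0$ for $i>d$, and passing to the summand gives $H^i_{\fm}(N)=0$ over $R$ for $i>d$; hence the cohomological dimension is at most $d$. For $\chi$, I would use the characterization that $\chi$ holds precisely when the local cohomology modules of finitely generated modules are cofinite (their graded duals are finitely generated). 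Given finitely generated $N$, the module $H^i_{\fm}(T\otimes_R N)$ is cofinite over $T$ by $\chi$ for $T$, hence cofinite over $R$ because $T$ is module-finite over $R$; its left-$R$ summand $H^i_{\fm}(N)$ is then cofinite over $R$ as well. Thus $R$ and symmetrically $R^{\op}$ satisfy $\chi$, and \cite{VdB} produces a balanced dualizing complex $\omega_R$ for $R$.

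For part (2), the vanishing is immediate: $H^i_{\fm}(R)$ is a direct summand of $H^i_{\fm}(T)$, which equals the corresponding value over $T$ and is zero for $i\neq d$ because $T$ is $d$-Cohen--Macaulay; hence $H^i_{\fm}(R)=0$ for all $i\neq d$. For the nonvanishing I would invoke local duality coming from part (1): balancedness of $\omega_R$ gives $R\Gamma_{\fm}(R)\cong\omega_R'$ in the derived category. Since $\omega_R$ is a dualizing complex over the nonzero ring $R$ it is nonzero with finitely generated cohomology, so its graded dual $\omega_R'$ is nonzero. As the only possibly nonzero cohomology of $R\Gamma_{\fm}(R)$ sits in degree $d$, we must have $H^d_{\fm}(R)\neq 0$, and therefore $R$ is $d$-Cohen--Macaulay.

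I expect the verification of the $\chi$ condition for $R$ to be the main obstacle, since it is the step that genuinely requires both the module-finiteness and the semisimplicity of $H$ (through the Reynolds splitting): cohomological finiteness alone would not suffice to build the balanced dualizing complex, and it is the cofiniteness transfer to the summand $R$ that closes the gap. By contrast, once $\omega_R$ is in hand the Cohen--Macaulay statement is essentially formal, with the nonvanishing handled cleanly by local duality rather than by trying to locate the nonzero top local cohomology of $T$ inside the summand $R$ directly.
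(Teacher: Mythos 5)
Your proof is correct, and it is essentially the argument behind the paper's ``proof,'' which consists solely of the citation to \cite[Lemma 3.2(b)]{KKZ3}: noetherianity of $R=T^H$ and module-finiteness of $T$ over $R$ from \cite{Mo}, the Reynolds ($R$-bimodule) splitting of $T$ coming from the integral of the semisimple Hopf algebra $H$, descent of the $\chi$-condition and of finite cohomological dimension of $\Gamma_{\fm}$ from $T$ to $R$ by realizing a graded $R$-module $N$ as a summand of $T\otimes_R N$, Van den Bergh's existence criterion for part (1), and the summand-plus-local-duality argument for part (2). Your closing observation about where semisimplicity enters is exactly right and worth stressing: the base-change results \cite[Theorem 8.3]{AZ} that this paper invokes elsewhere identify $H^i_{\fm_R}$ with $H^i_{\fm_T}$ on $T$-modules and transfer $\chi$ \emph{up} a finite extension, but they do not by themselves push $\chi$ down to a subring over which the larger ring is module-finite, so the splitting is genuinely what makes the descent to $R$ work rather than a mere stylistic alternative to a citation. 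Finally, your use of the characterization of $\chi$ via cofiniteness of local cohomology of finitely generated modules is applied in the right logical order: the implication you need (cofiniteness together with finite cohomological dimension implies $\chi$) is the valid direction, whereas the converse already presupposes the dualizing complex you are constructing, so establishing the cohomological dimension bound first, as you do, is essential.
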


Finally, we recall a definition of \cite[Definition 1.4]{JZ}. 
For each finitely generated graded left $A$-module $M$, define
$$B_M(t)=\sum_{i} (-1)^i h_{H^i_{\fm}(M)}(t),$$
which we can view as an element of ${\mathbb Q}((t^{-1}))$. 
We say that $M$ is {\it rational over ${\mathbb Q}$} if it 
satisfies the conditions:
\begin{enumerate}
\item[(a)]
$h_{M}(t)$ and $B_{M}(t)$ are rational functions over
${\mathbb Q}$ (inside ${\mathbb Q}((t))$ and 
${\mathbb Q}((t^{-1}))$ respectively), and 
\item[(b)]
as rational functions over ${\mathbb Q}$, we have 
\begin{equation}
\label{E2.6.1}\tag{E2.6.1}
h_{M}(t)=B_{M}(t).
\end{equation}
\end{enumerate}

Rationality over ${\mathbb Q}$ holds automatically for many 
graded algebras such as PI algebras and factor rings of AS regular
algebras \cite[Proposition 5.5]{JZ}. For simplicity, we assume

\begin{hypothesis}
\label{xxhyp2.7}
Every finitely generated graded left and right $A$-module is 
rational over ${\mathbb Q}$. 
\end{hypothesis}

\begin{conjecture}
\label{xxcon2.8} 
Every noetherian connected graded algebra with 
balanced dualizing complex satisfies Hypothesis 
\ref{xxhyp2.7}.
\end{conjecture}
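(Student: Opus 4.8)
The plan is to disentangle the two assertions packaged in ``rational over ${\mathbb Q}$'': (i) that each of $h_M(t)$ and $B_M(t)$ is a rational function (with coefficients in ${\mathbb Q}$), and (ii) the functional equation $h_M(t)=B_M(t)$ of \eqref{E2.6.1}. Since the opposite algebra $A^{\op}$ again carries a balanced dualizing complex (the one obtained from $R$ by interchanging the two sides), it suffices to treat finitely generated left modules. I would first record that both $M\mapsto h_M(t)$ and $M\mapsto B_M(t)$ are additive on short exact sequences: for $h_M$ this is immediate, and for $B_M$ it follows from the long exact sequence in local cohomology together with the fact that, since $A$ has a balanced dualizing complex, $H^i_{\fm}(-)$ vanishes for $i$ outside a bounded range, so the alternating sum is finite. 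Thus $\phi(M):=h_M(t)-B_M(t)$ factors through the Grothendieck group $K_0$ of finitely generated graded $A$-modules, and by dévissage it is enough to verify $\phi=0$ on the classes of the prime factors $A/\fp$ (which inherit balanced dualizing complexes) and on the finite-dimensional modules, where it is immediate since $H^i_{\fm}(\Bbbk)$ is $\Bbbk$ concentrated in degree $0$.

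For (ii) the main tool is Van den Bergh's local duality \cite{VdB}: the balanced dualizing complex $R$ yields a natural isomorphism $R\Gamma_{\fm}(M)\cong \RHom_A(M,R)'$ in ${\mathbb D}(A\mhyphen\GrMod)$ for every finitely generated $M$. Passing to the Euler characteristic of Hilbert series, and using that the graded $\Bbbk$-dual sends $h_N(t)$ to $h_N(t^{-1})$, this rewrites $B_M(t)$ as the Euler-characteristic Hilbert series $\chi_{\RHom_A(M,R)}(t^{-1})$. On a shifted free module $A(-j)$ one has $\RHom_A(A(-j),R)\cong R(j)$, and the balanced condition $R\Gamma_{\fm}(R)\cong A'$ is exactly the numerical input that pins down $\chi_R$ and forces $\phi(A)=0$; carrying this through the dévissage of the previous paragraph should yield $\phi\equiv 0$, i.e. $h_M(t)=B_M(t)$ as \emph{formal} expansions, and hence as rational functions once (i) is known.

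The hard part will be (i), the rationality of $h_M(t)$, which already for $M=A$ is the rationality of the Hilbert series $h_A(t)$. Local duality supplies a functional equation relating $h_A(t)$ and $h_A(t^{-1})$, but a functional equation by itself does not force a power series to be a rational function: one needs independent control on the growth or eventual quasi-polynomiality of the coefficients $\dim_{\Bbbk}A_i$, and I do not see how to extract this from the homological hypothesis of a balanced dualizing complex alone. In the settings where \cite{JZ} prove rationality the extra leverage comes from genuinely different structure --- a polynomial identity (giving a finite module over a commutative affine subalgebra) or a presentation as a factor ring of an AS regular algebra (giving a finite free resolution that makes $h_A$ manifestly rational) --- neither of which is available in general. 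I therefore expect rationality to be the true obstruction: proving it for an arbitrary noetherian connected graded algebra with balanced dualizing complex would in particular settle the long-standing question of whether such algebras always have rational Hilbert series, so a realistic partial result is either to assume rationality of $h_A$ as an added hypothesis (whereupon the argument of the preceding paragraph finishes the proof) or to restrict to the PI and AS-regular-factor classes already covered by \cite{JZ}.
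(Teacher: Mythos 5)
The statement you were asked to prove is labeled a \emph{conjecture} in the paper, and the paper offers no proof of it: immediately after stating it, the authors remark only that no counterexample is known. It is in fact an open problem, closely tied to the long-standing question of whether every noetherian connected graded algebra with a balanced dualizing complex has rational Hilbert series. So there is no proof in the paper to compare yours against, and your proposal, as you yourself concede in the final paragraph, is not a proof either: you correctly identify the rationality of $h_M(t)$ as the true obstruction and acknowledge that the homological hypothesis alone does not appear to yield it. That diagnosis is accurate, and your description of the extra leverage used in \cite[Proposition 5.5]{JZ} (a PI structure, or a finite free resolution over an AS regular algebra) matches what is actually known.

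Two technical points about the partial reduction you sketch, which would need repair even as a conditional argument. First, the ``difference'' $\phi(M):=h_M(t)-B_M(t)$ does not make sense a priori: $h_M(t)$ lives in $\Bbbk((t))$ while $B_M(t)$ lives in $\Bbbk((t^{-1}))$, and the two can only be compared after both are known to be expansions of a single rational function; so assertion (i) must logically precede assertion (ii), and the d\'evissage cannot be run on $\phi$ directly. Second, noncommutative d\'evissage does not reduce to the prime factor rings $A/\fp$: over a noncommutative noetherian ring, a finitely generated module has a filtration whose subquotients are critical cyclic modules with prime annihilator, but these subquotients need not be shifted copies of $A/\fp$ itself, so verifying the identity on the rings $A/\fp$ and on finite-dimensional modules would not suffice even granting rationality. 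In short, your assessment of where the difficulty lies is correct, but the statement remains open, and the intermediate reductions as written have gaps of their own.
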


No counterexample to the above conjecture is known.

\subsection{Castelnuovo--Mumford regularity}
\label{xxsec2.2}
In this subsection we recall the definitions and some basic 
properties of Castelnuovo--Mumford regularity and 
Ext-regularity in the noncommutative setting.
Noncommutative Castelnuovo--Mumford regularity was first 
studied by J{\o}rgensen in \cite{Jo2, Jo3} and later 
by Dong and Wu \cite{DW}.

\begin{definition}[{\cite[Definition 2.1]{Jo2}, \cite[Definition 4.1]{DW}}]
\label{xxdef2.9}
Let $M$ be a nonzero graded left $A$-module. The 
{\it Castelnuovo--Mumford regularity} (or \emph{CM regularity}, for short) of $M$ is defined to be
$$\begin{aligned}
\CMreg (M)&=\inf \{ p\in {\mathbb Z} \; \mid \; 
H^{i}_{\fm}(M)_{> p-i}=0, \forall \; i\in {\mathbb Z}\}\\
&=\sup \{i+\deg (H^{i}_{\fm}(M)) \; \mid  i\in {\mathbb Z}\}.
\end{aligned}$$
\end{definition}

As noted in \cite[Observation 2.3]{Jo3}, by \cite[Corollary 4.8]{VdB} 
$H^i_{\fm}(A) = H^i_{\fm^{\rm op}}(A)$ and hence 
$\CMreg({_AA}) = \CMreg ({A_A})$, which is simply denoted by 
$\CMreg(A)$.  Also by \cite[Theorem 8.3(3)]{AZ} if $B$ is a noetherian 
subring of $A$ and $A$ is finitely generated over $B$ on both sides then 
$\CMreg(A) =\CMreg({A_B})=\CMreg({_BA})$.

When $A$ has a balanced dualizing complex (for example, if $A$ is 
commutative or PI), $\CMreg(M)$ is finite for every nonzero 
finitely generated left graded $A$-module $M$ 
\cite[Observation 2.3]{Jo3}. But if $A$ does not have a balanced 
dualizing complex, then $\CMreg(A)$ could be infinite. For example, 
let $R_q$ be the noetherian connected graded algebra given in 
\cite[Theorem 2.3]{SZ}. It follows from \cite[Theorem 2.3(b)]{SZ} 
that $\deg H^1_{\fm}(R_q)=+\infty$ and therefore $\CMreg(R_q)=+\infty$.  
Next we give some examples where $\CMreg(M)$ is finite.

\begin{example}
\label{xxex2.10} The following examples are clear.
\begin{enumerate}
\item[(1)]
If $M$ is a finite-dimensional nonzero graded left $A$-module, 
then 
\begin{equation}
\label{E2.10.1}\tag{E2.10.1}
\CMreg(M)=\deg (M).
\end{equation}
A more general case is considered in part (4).
\item[(2)] \cite[Lemma 4.8]{DW}
Let $T$ be an AS Gorenstein algebra of type $(d, \bfl)$. Then
$\CMreg (T) =d-\bfl$. 
\item[(3)]
Let $T$ be an AS regular algebra of type $(d, \bfl)$. Then
$$\CMreg (T) =d-\bfl=\gldim T+\deg_t h_T(t),$$
which is a non-positive integer, see 
\cite[Proposition 3.1.4]{StZ}. It is easy to see 
that $T$ is Koszul if and only if $\CMreg(T)=0$
\cite[Proposition 3.1.5]{StZ}.
\item[(4)]
If $M$ is $s$-Cohen--Macaulay, then, by definition,
\begin{equation}
\label{E2.10.2}\tag{E2.10.2}
\CMreg(M)=s+ \deg(H^s_{\fm}(M)).
\end{equation}
\end{enumerate}
\end{example}

\begin{definition}[{\cite[Definition 2.2]{Jo3}, \cite[Definition 4.4]{DW}}]
\label{xxdef2.11}
Let $M$ be a nonzero graded left $A$-module. The 
{\it Ext-regularity} of $M$ is defined to be
\begin{align*}
\Extreg (M)&=\inf \{ p\in {\mathbb Z} \; \mid \; 
\Ext^i_A(M,\Bbbk)_{< -p-i}=0, \forall \; i\in {\mathbb Z}\}\\
&= - \inf \{ i+ \ged (\Ext^i_A(M, \Bbbk)) 
\; \mid \;  i\in {\mathbb Z}\}.
\end{align*}
The {\it Tor-regularity} of $M$ is defined to be
$$\begin{aligned}
\Torreg (M)&=\inf \{ p\in {\mathbb Z} \; \mid \; 
\Tor^A_i(\Bbbk, M)_{> p+i}=0, \forall \; i\in {\mathbb Z}\}\\
&=\sup\{ -i+\deg (\Tor^A_i(\Bbbk, M)) \; \mid \;
 i\in {\mathbb Z}\}.
\end{aligned}$$
If $M$ is a finitely generated graded module over a left 
noetherian ring $A$, then $\Extreg(M)=\Torreg(M)$ 
\cite[Remark 4.5]{DW}, and we will not distinguish between 
$\Extreg(M)$ and $\Torreg(M)$ in this case.
\end{definition}

\begin{example}
\label{xxex2.12}
The following examples are clear.
\begin{enumerate}
\item[(1)]
If $r=\Torreg(M)$, then 
\begin{equation}
\label{E2.12.1}\tag{E2.12.1}
t^A_i(_A M):=\deg(\Tor^A_i(\Bbbk, M))\leq r+i
\end{equation}
for all $i$.
\item[(2)] \cite[Example 4.6]{DW}
$\Extreg(A)=0$.
\end{enumerate}
\end{example}

Now we are ready to state some nice results 
in \cite{Jo2, Jo3, DW}.

\begin{theorem} 
\label{xxthm2.13}
Let $A$ be a noetherian connected graded algebra with a 
balanced dualizing complex and let $M\neq 0$ be a finitely 
generated graded left $A$-module. 
\begin{enumerate}
\item[(1)] \cite[Theorems 2.5 and 2.6]{Jo3} 
$$-\CMreg(A) \leq \Extreg(M)-\CMreg(M) \leq \Extreg(\Bbbk).$$
\item[(2)]  \cite[Corollary 2.8]{Jo3} \cite[Theorem 5.4]{DW}
$A$ is a Koszul AS regular algebra if and only if
$\Extreg(M)=\CMreg(M)$ for all $M$.
\item[(3)] \cite[Proposition 5.6]{DW}
If $M$ has finite projective dimension, then 
$$-\CMreg(A) = \Extreg(M)-\CMreg(M).$$
\end{enumerate}
\end{theorem}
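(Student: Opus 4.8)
The plan is to read part (1) as the noncommutative analogue of the Eisenbud--Goto comparison between the resolution-flavored regularity $\Extreg(M)=\Torreg(M)$ (Definition~\ref{xxdef2.11}) and the local-cohomology-flavored regularity $\CMreg(M)$, with the two outer terms $\CMreg(A)$ and $\Extreg(\Bbbk)$ measuring the failure of $A$ to be Koszul AS regular; parts (2) and (3) then follow by specializing and sharpening (1). For the left inequality I would fix a minimal graded free resolution $P_\bullet \to M$, so that $P_i=\bigoplus_j A(-a_{ij})$ with $\max_j a_{ij}=t^A_i(M)$, and feed it into the hyper-local-cohomology spectral sequence
\[
E_1^{-i,q}=H^q_{\fm}(P_i)=\bigoplus_j H^q_{\fm}(A)(-a_{ij})\;\Longrightarrow\; H^{q-i}_{\fm}(M).
\]
Since $\deg H^q_{\fm}(A)(-a_{ij})=\deg H^q_{\fm}(A)+a_{ij}$, any class surviving to $H^{q-i}_{\fm}(M)$ sits in internal degree at most $\deg H^q_{\fm}(A)+t^A_i(M)$. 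Writing $n=q-i$ and adding $n$, the ``weight'' $n+\deg H^n_{\fm}(M)$ is bounded by $\bigl(q+\deg H^q_{\fm}(A)\bigr)+\bigl(-i+t^A_i(M)\bigr)\le \CMreg(A)+\Torreg(M)$, and taking the supremum gives $\CMreg(M)\le \CMreg(A)+\Torreg(M)$, which rearranges to the left inequality $-\CMreg(A)\le \Extreg(M)-\CMreg(M)$.

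The right inequality is the harder half, and here the balanced dualizing complex $R$ enters through local duality, $R\Gamma_{\fm}(M)\cong \RHom_A(M,R)'$ (graded $\Bbbk$-dual). Applied to $M=\Bbbk$, for which $R\Gamma_{\fm}(\Bbbk)=\Bbbk$, this yields $\RHom_A(\Bbbk,R)\cong \Bbbk$ with \emph{no} shift; applied to general $M$ it gives $\Ext^{-i}_A(M,R)\cong H^i_{\fm}(M)'$, so the CM bound $\deg H^i_{\fm}(M)\le \CMreg(M)-i$ becomes the lower bound $\ged \Ext^{-i}_A(M,R)\ge i-\CMreg(M)$ on the cohomology of $N:=\RHom_A(M,R)$. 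Using the standard swap $\RHom_A(M,\RHom_{A^{\op}}(\Bbbk,R))\cong \RHom_{A^{\op}}(\Bbbk,\RHom_A(M,R))$ for $\RHom$ into the dualizing bimodule, together with $\RHom_{A^{\op}}(\Bbbk,R)\cong\Bbbk$, I would identify $\RHom_A(M,\Bbbk)\cong \RHom_{A^{\op}}(\Bbbk,N)$, and then run the spectral sequence $\Ext^s_{A^{\op}}(\Bbbk,H^t(N))\Rightarrow \Ext^{s+t}_{A^{\op}}(\Bbbk,N)$. Bounding $\ged \Ext^s_{A^{\op}}(\Bbbk,H^t(N))\ge \ged H^t(N)-t^A_s(\Bbbk)\ge(-\CMreg(M)-t)-(\Torreg(\Bbbk)+s)$ and collecting terms gives $\ged \Ext^m_A(M,\Bbbk)\ge -\CMreg(M)-\Torreg(\Bbbk)-m$, i.e. $\Extreg(M)\le \CMreg(M)+\Extreg(\Bbbk)$, the right inequality.

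Part (2) is then formal. If $A$ is Koszul AS regular, then $\CMreg(A)=0$ by Example~\ref{xxex2.10}(3) and $\Extreg(\Bbbk)=\Torreg(\Bbbk)=0$ since $\Bbbk$ has a linear resolution, so both outer terms of part (1) vanish and $\Extreg(M)=\CMreg(M)$ for all $M$. Conversely, taking $M=A$ gives $\CMreg(A)=\Extreg(A)=0$ by Example~\ref{xxex2.12}(2), and taking $M=\Bbbk$ gives $\Extreg(\Bbbk)=\CMreg(\Bbbk)=\deg\Bbbk=0$; the vanishing $\Torreg(\Bbbk)=0$ together with generation in degree $1$ (forced by $t^A_1(\Bbbk)\le 1$) yields Koszulity, and Koszulity with $\CMreg(A)=0$ and the balanced dualizing complex gives AS regularity. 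For part (3) I would sharpen the left inequality to an equality using $\pdim_A M<\infty$: the resolution $P_\bullet$ is now finite, so the spectral sequence above terminates, its extremal (top) row carries no incoming differentials and computes $\Tor^A_\bullet(H^{q_0}_{\fm}(A),M)$ for $q_0=\max\{q:H^q_{\fm}(A)\neq0\}$, and a minimality argument shows the top-degree homology there is nonzero, realizing the weight $\CMreg(A)+\Torreg(M)$ and forcing $\CMreg(M)=\CMreg(A)+\Torreg(M)$, equivalently $-\CMreg(A)=\Extreg(M)-\CMreg(M)$.

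The main obstacle is the right inequality of (1): transporting the CM bound through local duality and the swap identity requires pinning down that $\RHom_A(\Bbbk,R)\cong\Bbbk$ carries \emph{no} internal or homological shift (so that no spurious constant pollutes the bound) and then controlling the convergence and degree bookkeeping of the nested spectral sequence. The companion delicate point is the non-vanishing claim in part (3): a raw degree count is not enough to prevent the extremal class from being a boundary, and it is precisely the minimality of $P_\bullet$ (multiplication by $\fm$ cannot raise a top-degree local cohomology class), combined with the finiteness forced by $\pdim_A M<\infty$, that secures survival.
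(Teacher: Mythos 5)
The paper itself does not prove this theorem: it is quoted verbatim from J{\o}rgensen \cite[Theorems 2.5, 2.6, Corollary 2.8]{Jo3} and Dong--Wu \cite[Theorem 5.4, Proposition 5.6]{DW}, so the only fair comparison is against those sources. Your proof of part (1) is essentially J{\o}rgensen's: the left inequality via the hyper-local-cohomology spectral sequence of a minimal free resolution, and the right inequality via local duality, the $\RHom$ swap through the balanced dualizing complex $R$ (using $\RHom_{A^{\op}}(\Bbbk,R)\cong\Bbbk$ with no shift), and the hyperext spectral sequence. The degree bookkeeping in both halves checks out; the only point you gloss is convergence of the first spectral sequence when $\pdim_A M=\infty$, which is fine because the balanced dualizing complex forces $\Gamma_{\fm}$ to have finite cohomological dimension and $\deg H^q_{\fm}(A)<\infty$, so in each total degree only finitely many terms contribute. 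Part (1) I would accept.

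Parts (2) and (3), however, have genuine gaps, and in both cases the gap sits exactly at the hard content of the cited results. In part (2), the reduction via $M=A$ (giving $\CMreg(A)=0$) and $M=\Bbbk$ (giving Koszulity) is the easy half; the closing assertion that ``Koszulity with $\CMreg(A)=0$ and the balanced dualizing complex gives AS regularity'' is precisely the nontrivial implication of \cite[Theorem 5.4]{DW} (one must extract finite global dimension and the Gorenstein condition from these regularity hypotheses), and you give no argument for it --- in effect you cite the theorem you are trying to prove. In part (3), the survival argument fails on two counts. First, the top nonvanishing row $q_0$ of $H^{\bullet}_{\fm}(A)$ need not realize $\CMreg(A)$: the supremum $\sup_q\{q+\deg H^q_{\fm}(A)\}$ can be attained at a lower row, and then the top row only produces the weight $q_0+\deg H^{q_0}_{\fm}(A)+\Torreg(M)$, which may be strictly smaller than $\CMreg(A)+\Torreg(M)$. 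Second, and more seriously, your minimality principle --- ``multiplication by $\fm$ cannot raise a top-degree local cohomology class'' --- addresses the wrong direction. The danger is not that $\fm$ kills top-degree classes (it does); it is that \emph{lower-degree} classes multiply \emph{up into} the top degree, so the extremal class can be a boundary of an incoming $d_1$. Minimality only puts the entries of the differentials of $P_\bullet$ in $\fm$; it does not make the induced maps on $H^q_{\fm}(A)\otimes_A P_\bullet$ vanish in top degree, since in general $(\fm\cdot H^q_{\fm}(A))_{\deg H^q_{\fm}(A)}\neq 0$ --- already for $A=\Bbbk[x]$ one has $x\cdot H^1_{\fm}(A)_{-2}=H^1_{\fm}(A)_{-1}$. (In the commutative polynomial-ring case this is repaired by dualizing into the \emph{free} canonical module and choosing the largest extremal homological index; that repair is unavailable when all one has is a balanced dualizing complex, which is why \cite[Proposition 5.6]{DW} requires its own proof.) So as written, parts (2) and (3) are not proved.
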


In the next lemma, $\reg$ can be $\CMreg$, $\Extreg$, or
$\Torreg$. 

\begin{lemma}[{\cite[Corollary 20.19]{Ei}, \cite[Lemma 1.6]{Hoa}}]
\label{xxlem2.14}
Let $$0\to L\to M \to N\to 0$$ be a short exact sequence 
of finitely generated graded left modules over a 
left noetherian algebra $A$. 
\begin{enumerate}
\item[(1)]
$\reg(L) \leq \max \{\reg(M), \reg(N) + 1\}$ with 
equality if $\reg(M) \neq \reg(N)$.
\item[(2)]
$\reg(M) \leq \max\{ \reg(L), \reg(N)\}$. 
\item[(3)]
$\reg(N) \leq \max\{ \reg(L)-1, \reg(M)\}$ with 
equality if $\reg(M) \neq \reg(L)$.
\end{enumerate}
\end{lemma}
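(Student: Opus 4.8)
The plan is to prove all three statements simultaneously by exploiting the fact that each of $\CMreg$, $\Extreg$, and $\Torreg$ is assembled from a single family of graded functors together with a long exact sequence, so that the estimates can be read off degree by degree. Since $A$ is left noetherian and all modules in sight are finitely generated, I may invoke $\Extreg=\Torreg$ (the remark following Definition~\ref{xxdef2.11}), so it suffices to treat $\reg=\CMreg$ and $\reg=\Torreg$. For $\reg=\CMreg$ I would write $C^i(-)=H^i_{\fm}(-)$, so that $\CMreg(X)=\sup_i\{i+\deg C^i(X)\}$, and use the long exact sequence of local cohomology
$$\cdots \to C^i(L) \to C^i(M) \to C^i(N) \xrightarrow{\ \partial\ } C^{i+1}(L) \to \cdots,$$
which is exact in each internal degree $j$. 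For $\reg=\Torreg$ I would set $\tilde C^j(-)=\Tor^A_{-j}(\Bbbk,-)$, so that $\Torreg(X)=\sup_j\{j+\deg \tilde C^j(X)\}$; after the substitution $i\mapsto -j$, the long exact sequence in $\Tor$ takes exactly the same shape $\cdots\to \tilde C^j(L)\to \tilde C^j(M)\to \tilde C^j(N)\to \tilde C^{j+1}(L)\to\cdots$, again exact in each internal degree. Both cases thus fit one abstract pattern: a family $\{C^i\}_{i\in\mathbb Z}$ with $\rho(X):=\sup_i\{i+\deg C^i(X)\}$ and a connecting map raising the index by one.

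The three inequalities would then come from the three overlapping three-term exact pieces of the sequence, read in a fixed internal degree $j$ (using the convention $\deg(0)=-\infty$ for the degenerate cases). For (2), if $C^i(M)_j\neq 0$ then exactness of $C^i(L)_j\to C^i(M)_j\to C^i(N)_j$ forces $C^i(L)_j\neq 0$ or $C^i(N)_j\neq 0$, so $i+j\le\max\{\rho(L),\rho(N)\}$; taking the supremum gives $\rho(M)\le\max\{\rho(L),\rho(N)\}$. For (1), reading the piece $C^{i-1}(N)_j\to C^i(L)_j\to C^i(M)_j$ gives $\rho(L)\le\max\{\rho(M),\rho(N)+1\}$, and for (3), reading $C^i(M)_j\to C^i(N)_j\to C^{i+1}(L)_j$ gives $\rho(N)\le\max\{\rho(L)-1,\rho(M)\}$. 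The equality assertions I would then deduce formally from these three inequalities by a short case analysis: if $\rho(M)>\rho(N)$, then (2) forces $\rho(L)\ge\rho(M)$, which meets the upper bound from (1); if $\rho(M)<\rho(N)$, then (3) forces $\rho(L)\ge\rho(N)+1$, meeting the bound again; and symmetrically for the equality in (3) using the sign of $\rho(M)-\rho(L)$.

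The only real bookkeeping, and the step I expect to require the most care, is verifying that the $\Tor$ long exact sequence, after reindexing by $i\mapsto -j$, genuinely reproduces the local-cohomology pattern so that one degree-by-degree argument covers both regularities, and that each connecting homomorphism raises the effective index by exactly one. This is precisely what reconciles the opposite signs in the definitions of $\CMreg$ (weight $+i$) and $\Torreg$ (weight $-i$). I anticipate that this variance and sign accounting, rather than the underlying homological algebra, will be the main point demanding attention.
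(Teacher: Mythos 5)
Your proof is correct. The paper gives no proof of this lemma at all --- it simply cites \cite[Corollary 20.19]{Ei} and \cite[Lemma 1.6]{Hoa} --- and your argument is precisely the standard one behind those references: read the long exact sequence (in local cohomology for $\CMreg$, in $\Tor(\Bbbk,-)$ for $\Torreg$) in each internal degree, note that nonvanishing of the middle term of any three-term exact piece forces nonvanishing of one of its neighbors, and then deduce the two equality statements by playing the three resulting inequalities against each other; your case analysis for the equalities checks out. The one point where your write-up adds value over the citations is the reindexing $\tilde C^j = \Tor^A_{-j}(\Bbbk,-)$, which makes the $\Tor$ long exact sequence formally identical to the local-cohomology one, so a single degree-by-degree argument covers $\CMreg$ and $\Torreg$ simultaneously (and $\Extreg$ via the identity $\Extreg=\Torreg$ for finitely generated modules over a left noetherian algebra, quoted after Definition~\ref{xxdef2.11}); this matters because the cited sources work in the commutative setting, whereas the paper invokes the lemma for noncommutative connected graded algebras, and your proof is visibly valid in that generality.
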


\begin{lemma}
\label{xxlem2.15}
Assume Hypothesis \ref{xxhyp0.3}. 
\begin{enumerate}
\item[(1)]
$\CMreg(T^H)\leq \CMreg(T)\leq 0$.
\item[(2)]
$\CMreg(T^H)=\CMreg(T)$ if and only if the $H$-action 
on $T$ has trivial homological determinant.
\item[(3)]
If both $T$ and $T^H$ are AS regular and $T^H\neq T$, 
then 
$$\CMreg(T^H)<\CMreg(T).$$ 
\end{enumerate}
\end{lemma}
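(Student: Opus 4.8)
The plan is to prove the three statements of Lemma~\ref{xxlem2.15} using the Cohen--Macaulay property of $T$ and $T^H$ together with the local cohomology comparison that relates invariants to the homological determinant. Throughout, write $R:=T^H$ and $d:=\injdim T$. By Lemma~\ref{xxlem2.6}, both $T$ and $R$ are $d$-Cohen--Macaulay and $R$ admits a balanced dualizing complex, so by Example~\ref{xxex2.10}(4) we have $\CMreg(T)=d+\deg H^d_{\fm}(T)$ and $\CMreg(R)=d+\deg H^d_{\fm}(R)$. Thus every assertion reduces to comparing the top degrees of the single nonvanishing local cohomology modules $H^d_{\fm}(T)$ and $H^d_{\fm}(R)$.

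First I would establish part (1). Since $R=T^H$ is the fixed subring of a semisimple Hopf action, $T$ is a finitely generated $R$-module on both sides (by \cite{Mo}, as used in Lemma~\ref{xxlem1.4}), and the inclusion $R\hookrightarrow T$ together with the averaging (Reynolds) operator $\pi:T\to R$ exhibits $R$ as an $R$-bimodule direct summand of $T$. Local cohomology over $T$ and over $R$ agree for $R$-modules that are finitely generated as $R$-modules (the $\fm$-torsion functor only sees the augmentation ideal, and $T_{\geq 1}\cap R=R_{\geq 1}$), so $H^d_{\fm}(R)$ is a direct summand of $H^d_{\fm}(T)$ as graded vector spaces. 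Hence $\deg H^d_{\fm}(R)\le \deg H^d_{\fm}(T)$, giving $\CMreg(R)\le\CMreg(T)$; the bound $\CMreg(T)\le 0$ is exactly Example~\ref{xxex2.10}(3) for AS Gorenstein (AS regular) $T$, or follows from the type-$(d,\bfl)$ computation $\CMreg(T)=d-\bfl$ of Example~\ref{xxex2.10}(2) once one knows $\bfl\ge d$.

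For part (2), I would invoke the standard formula for the top local cohomology of a fixed ring in terms of the homological determinant $\hdet$ of the $H$-action. The Reynolds decomposition $T=R\oplus C$ splits $H^d_{\fm}(T)=H^d_{\fm}(R)\oplus H^d_{\fm}(C)$ as graded $H$-modules, and the $H$-action on the one-dimensional top-degree piece of $H^d_{\fm}(T)\cong T'(\bfl)$ (coming from the balanced dualizing complex $R\Gamma_{\fm}(T)\cong T'(\bfl)$) is precisely the character $\hdet$. The summand $H^d_{\fm}(R)$ is the $H$-invariant part, and its top degree equals that of $H^d_{\fm}(T)$ exactly when $\hdet$ is trivial, since a nontrivial determinant forces the top graded piece to lie outside the invariants and thereby strictly lowers the degree. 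This is the noncommutative analogue of the classical statement that $\CMreg$ is preserved precisely when the action is ``special,'' and I expect to cite the homological determinant computations of \cite{KKZ3} (or Lemma~\ref{xxlem2.6}'s source) to pin down the character on the socle.

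Part (3) is then a corollary of part (2): if both $T$ and $R$ are AS regular and $R\neq T$, then the action is genuinely nontrivial, so by the noncommutative Shephard--Todd--Chevalley philosophy the homological determinant cannot be trivial (a nontrivial AS regular fixed ring of a semisimple Hopf action has nontrivial $\hdet$ unless the action is trivial, which is excluded by $R\neq T$). Hence by part (2) the equality $\CMreg(R)=\CMreg(T)$ fails, and combined with the inequality of part (1) we get the strict inequality $\CMreg(R)<\CMreg(T)$. The main obstacle I anticipate is part (2): carefully identifying the $H$-module structure on the socle of $H^d_{\fm}(T)$ with the homological determinant character and verifying that nontriviality of $\hdet$ forces a strict drop in top degree rather than merely a possible drop. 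This requires the balanced-dualizing-complex description $R\Gamma_{\fm}(T)\cong T'(\bfl)$ and the compatibility of the Reynolds operator with local cohomology; the remaining arithmetic with $a$-invariants and CM degrees is routine given Examples~\ref{xxex2.2} and~\ref{xxex2.10}.
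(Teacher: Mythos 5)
Your proposal is correct and takes essentially the same route as the paper: part (1) via the direct-summand comparison of $H^d_{\fm}(T^H)$ inside $H^d_{\fm}(T)$ (the content of \cite[Lemma 2.5(b)]{KKZ3}, which you re-derive using the Reynolds operator and change of rings), part (2) via the homological determinant character acting on the one-dimensional top-degree piece of $H^d_{\fm}(T)$ (the content of \cite[Lemma 3.5(f)]{KKZ3}), and part (3) by combining parts (1) and (2) with the rigidity theorem \cite[Theorem 0.6]{CKWZ1}, which is precisely the ``noncommutative Shephard--Todd--Chevalley'' statement you invoke without naming. The only difference is presentational: you unpack the two lemmas from \cite{KKZ3} that the paper simply cites.
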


\begin{proof}
(1) By Example \ref{xxex2.10}(3), $\CMreg(T)\leq 0$.
So it suffices to show the first inequality.
By \cite[Lemma 2.5(b)]{KKZ3}, $H^i_{\fm}(T^H)$ is a 
direct summand of $H^i_{\fm}(T)$ for all $i$. 
In particular, if $d$ is the injective dimension of $T$, then
$\deg (H^d_{\fm}(T^H))\leq \deg (H^d_{\fm}(T))$ 
and $H^i_{\fm}(T^H)=H^i_{\fm}(T)=0$ for all
$i\neq d$. 
The assertion now follows from Definition 
\ref{xxdef2.9}.

(2) By the proof of part (1), one sees that 
$\CMreg(T^H)=\CMreg(T)$ if and only if 
$\deg (H^d_{\fm}(T^H))=\deg (H^d_{\fm}(T))$,
and if and only if $\ged (H^d_{\fm}(T^H)')=
\ged (H^d_{\fm}(T)')$. Now the assertion basically 
follows from \cite[Lemma 3.5(f)]{KKZ3} (after one matches
up the notation). 

(3) This follows from \cite[Theorem 0.6]{CKWZ1} and 
part (2).
\end{proof}

\begin{lemma}
\label{xxlem2.16} 
Let $f:S\to T$ be a graded algebra homomorphism between 
two noetherian AS regular algebras such that $T$ 
is finitely generated over $S$ on both sides. Let 
$\delta(T/S)= \CMreg(T) -\CMreg(S)$.
\begin{enumerate}
\item[(1)]
$t^S_i(T_S) \leq \delta(T/S)+i$ for all $i\geq 0$.
\item[(2)]
$\deg T/TS_{\geq 1}\leq \delta(T/S)$.
\item[(3)]
Assume Hypothesis \ref{xxhyp0.3} and, in addition,  that
the image of $f$ is $T^H$. Then 
$\tau_H(T)\leq \delta(T/S)+1$. Similarly,
$\tau^{op}_H(T)\leq \delta(T/S)+1$.
\end{enumerate}
\end{lemma}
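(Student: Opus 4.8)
The plan is to deduce everything from part (1), which in turn rests on identifying the Tor-regularity of $T$ as an $S$-module with $\delta(T/S)$. Since $S$ is noetherian AS regular, it has finite global dimension and admits a balanced dualizing complex, so Theorem~\ref{xxthm2.13}(3) applies to any finitely generated $S$-module of finite projective dimension. Viewing $T$ as a right $S$-module via $f$, finiteness over $S$ together with $\gldim S<\infty$ guarantees that $T_S$ has finite projective dimension. Applying the right-handed form of Theorem~\ref{xxthm2.13}(3) (i.e.\ working over $S^{\op}$, which is again noetherian AS regular with a balanced dualizing complex) gives
$$\Extreg(T_S)=\CMreg(T_S)-\CMreg(S).$$
Because $T_S$ is finitely generated over the noetherian ring $S$, its Ext- and Tor-regularities agree, so $\Torreg(T_S)=\CMreg(T_S)-\CMreg(S)$, and then Example~\ref{xxex2.12}(1) (applied on the right) yields $t^S_i(T_S)\le \Torreg(T_S)+i$ for all $i\ge 0$.

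The key remaining point for (1) is to show $\CMreg(T_S)=\CMreg(T)$. Here I would set $B:=f(S)\subseteq T$, a noetherian connected graded subring over which $T$ is finite on both sides; passing to the image is necessary since $f$ need not be injective. The local cohomology computing $\CMreg(T_S)$ depends only on the torsion functor $\Gamma_{\fm_S}$, and since $f(S_{\ge 1})=B_{\ge 1}$ this functor coincides with $\Gamma_{\fm_B}$ on $T$; hence $\CMreg(T_S)=\CMreg(T_B)$, and the invariance result \cite{AZ} recalled after Definition~\ref{xxdef2.9} gives $\CMreg(T_B)=\CMreg(T)$. Combining, $\Torreg(T_S)=\CMreg(T)-\CMreg(S)=\delta(T/S)$, which proves (1). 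I expect this identification of the regularity computed over $S$ with the regularity computed over $T$ --- through the possibly non-injective map $f$ --- to be the main technical obstacle; everything else is formal.

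Parts (2) and (3) then follow immediately. For (2), take $i=0$ in (1): since $T\otimes_S\Bbbk=T/TS_{\ge 1}$, we get $\deg(T/TS_{\ge 1})=t^S_0(T_S)\le\delta(T/S)$. For (3), Hypothesis~\ref{xxhyp0.3} together with the assumption $f(S)=T^H=:R$ gives $f(S_{\ge 1})=R_{\ge 1}$ (as $f$ is graded and surjective onto $R$), so $TS_{\ge 1}=TR_{\ge 1}=J_H(T)$. By Definition~\ref{xxdef1.1}(2) and part (2),
$$\tau_H(T)=1+\deg(T/J_H(T))=1+\deg(T/TS_{\ge 1})\le \delta(T/S)+1.$$
The bound on $\tau^{\op}_H(T)$ follows by running the identical argument with left modules ${}_ST$ and the right Hilbert ideal $J^{\op}_H(T)=R_{\ge 1}T$, using the left-handed forms of (1) and (2), which hold by passing to $T^{\op}$ and $S^{\op}$.
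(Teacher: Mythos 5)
Your proposal is correct and follows essentially the same route as the paper's proof: part (1) is obtained from Theorem~\ref{xxthm2.13}(3) together with Example~\ref{xxex2.12}(1) and the identification $\CMreg(T_S)=\CMreg(T)$, and parts (2) and (3) are then the formal consequences (the $i=0$ case and the definition of $\tau_H$). The only difference is that you make explicit what the paper leaves implicit --- the finite projective dimension of $T_S$, the passage to the image $B=f(S)$ to handle non-injectivity of $f$, and the appeal to the \cite{AZ} invariance of CM regularity recalled after Definition~\ref{xxdef2.9} --- which is a faithful unpacking rather than a different argument.
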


\begin{proof}
(1) By Example \ref{xxex2.12}(1) and Theorem \ref{xxthm2.13}(3),
we have
$$t^S_i(T_S)\leq \Torreg(T_S)+i=
\CMreg(T_S)-\CMreg(S)+i=
\CMreg(T)-\CMreg(S)+i.$$

(2) By definition,
$$\deg T/TS_{\geq 1}=\deg \Tor^S_0(T,\Bbbk)
=t^S_0(T_S)\leq \delta(T/S).$$

(3) By definition, $\tau_H(T)=\deg T/TS_{\geq 1}+1$.
The assertion follows from part (2).
\end{proof}

\section{$\tau$-saturation degree and $\beta(A)$}
\label{xxsec3}

In this section, we generalize some arguments in commutative
invariant theory (see \cite{De, DS, Fl, Fo, Ga}) to a 
noncommutative setting. As in Hypothesis \ref{xxhyp0.3}, 
we usually assume that the Hopf algebra $H$ is semisimple. 
In the commutative setting, if the group $G$ acts on 
$\Bbbk[x_1, \dots, x_n]$ via graded automorphisms, then 
\begin{equation}
\label{E3.0.1}\tag{E3.0.1}
\beta(\Bbbk[x_1, \dots, x_n]^G) \leq 
\tau_G(\Bbbk[x_1, \dots, x_n]) \leq |G|
\end{equation}
\cite{De, Fo}. Similarly, if $T$ is noncommutative and $H$ 
is a semisimple Hopf algebra acting homogeneously on $T$, 
we relate $\tau_H(T)$ to $\beta(T^H)$. As noted in Question 
\ref{xxque0.4}, in the noncommutative case, we do not have 
a general upper bound on $\tau_H(T)$; it would be nice to 
have such a bound even for particular classes of $T$ and $H$.

\subsection{Easy observations}
\label{xxsec3.1}
Let $A$ be a connected graded algebra with maximal graded 
ideal $\fm$. If $\fm$ is generated by a set of homogeneous 
elements as a left (or, right) ideal, then $A$ is generated 
by the same set as an algebra. Another way of saying this  is 
that $A$ is generated by $\fm/\fm^2$, as it is well-known that
\begin{equation}
\label{E3.0.2}\tag{E3.0.2}
\fm/\fm^2\cong \Tor^A_1(\Bbbk,\Bbbk)
\end{equation}
as graded vector spaces. 

If $M=\bigoplus_{i\in {\mathbb Z}} M_i$ is a 
${\mathbb Z}$-graded vector space, then let $M_{\leq n}$ denote 
the subspace $\bigoplus_{i\leq n} M_i$. The following lemma is 
easy to prove, and the proof is omitted.

\begin{lemma}
\label{xxlem3.1}
Let $A$ be a connected graded algebra and let $H$ be a 
semisimple Hopf algebra acting on $A$ homogeneously. Let $B$ 
be a factor graded ring of $A$ with induced $H$-action. Let 
$d$ be a positive integer. Then the following hold.
\begin{enumerate}
\item[(1)]
$\beta(B)\leq \beta(A)$. 
\item[(2)]
If $B=A/A_{\geq d}$, then
$\beta(B)=\deg (\Tor^A_1(\Bbbk,\Bbbk)_{\leq d-1})$.
\item[(3)]
$B^H$ is a factor ring of $A^H$.
\item[(4)]
$(A/A_{\geq d})^H=A^H/(A^H)_{\geq d}$.
As a consequence, 
$$\beta((A/A_{\geq d})^H)=
\deg (\Tor^{A^H}_1(\Bbbk,\Bbbk)_{\leq d-1}).$$
\end{enumerate}
\end{lemma}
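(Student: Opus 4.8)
The plan is to prove each of the four parts directly from the definitions, since these are all "easy observations" that follow by unwinding the characterizations of $\beta$ via $\Tor_1$ and by tracking the $H$-action through the relevant exact sequences.

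For part (1), since $B$ is a factor ring of $A$, there is a graded surjection $A\to B$. Applying the identity $\beta(A)=t^A_1(\Bbbk)=\deg(\Tor^A_1(\Bbbk,\Bbbk))$ from \eqref{E1.0.5}, the key point is that a minimal generating set of $B$ lifts to elements of $A$, so the top degree of a minimal generator of $B$ cannot exceed that of $A$. Equivalently, using \eqref{E3.0.2}, the surjection $A\to B$ induces a surjection $\fm_A/\fm_A^2 \to \fm_B/\fm_B^2$ in each degree, which immediately gives $\deg(\fm_B/\fm_B^2)\leq \deg(\fm_A/\fm_A^2)$.

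For part (2), when $B=A/A_{\geq d}$, I would observe that $B$ is generated in degrees $\leq d-1$ automatically (it vanishes in degrees $\geq d$), and compute $\fm_B/\fm_B^2$ directly: in degrees $\leq d-1$ the map $\fm_A/\fm_A^2 \to \fm_B/\fm_B^2$ is an isomorphism, while in higher degrees $\fm_B/\fm_B^2$ vanishes. Thus $\beta(B)=\deg(\Tor^A_1(\Bbbk,\Bbbk)_{\leq d-1})$ by \eqref{E3.0.2} and \eqref{E1.0.5}. Part (3) is the assertion that $B^H$ is a factor ring of $A^H$: because the $H$-action on $A$ descends to $B$ (as $B$ carries the induced action) and $H$ is semisimple, the invariants functor $(-)^H$ is exact, so applying it to the surjection $A\to B$ yields a surjection $A^H\to B^H$ of graded algebras.

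For part (4), the main content is the identity $(A/A_{\geq d})^H = A^H/(A^H)_{\geq d}$. Here I would again use semisimplicity of $H$ to ensure exactness of $(-)^H$: applying invariants to the short exact sequence $0\to A_{\geq d}\to A\to A/A_{\geq d}\to 0$ gives $0\to (A_{\geq d})^H\to A^H\to (A/A_{\geq d})^H\to 0$, and since the grading is preserved one checks $(A_{\geq d})^H=(A^H)_{\geq d}$ (an invariant element lies in degree $\geq d$ iff all its homogeneous components do, as the action is homogeneous). The consequence about $\beta$ then follows by applying part (2) with $A$ replaced by $A^H$ and its induced trivial-in-degree-$0$ grading. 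I expect the only mild subtlety — and hence the step to state carefully — is the repeated use of exactness of $(-)^H$ from semisimplicity, together with the compatibility of the fixed-subring construction with the grading in parts (3) and (4); everything else is a routine unwinding of \eqref{E1.0.5} and \eqref{E3.0.2}.
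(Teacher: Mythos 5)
Your proof is correct: each part follows exactly as you argue, via the graded surjection $\fm_A/\fm_A^2 \to \fm_B/\fm_B^2$ for (1), the degree-by-degree identification $\fm_B/\fm_B^2 \cong (\fm_A/\fm_A^2)_{\leq d-1}$ for (2), and exactness of $(-)^H$ (semisimplicity of $H$) plus the identity $(A_{\geq d})^H = (A^H)_{\geq d}$ for (3) and (4). The paper itself omits the proof, stating only that the lemma is easy, and your argument is precisely the routine unwinding of \eqref{E1.0.5}, \eqref{E3.0.2}, and the exactness of invariants that the authors intend.
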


The main result of the next lemma is essentially the 
same as \cite[Lemma VI.7]{Ga}. Here we prove a slightly 
more general result.

\begin{lemma}
\label{xxlem3.2}
Suppose that $f: B \to A$ is a graded algebra homomorphism of 
connected graded algebras with $A$ generated in degree $1$ and 
that $C := \im(f)$ is a direct summand of $A$ as a right $B$-module.
\begin{enumerate}
\item[(1)] 
The left ideal $J = A C_{\geq 1}$ is generated by a set of 
elements of degree $\leq t_0^B(A_B) + 1$.
\item[(2)] 
If $J$ is generated by elements in $C_{\geq 1}$ of degree $\leq r$, 
then $\beta(C) \leq r$.
\item[(3)] 
$\beta(C) \leq t_0^B(A_B) + 1$.
\end{enumerate}
The analogous results hold for the right ideal $C_{\geq 1} A$ and 
$t_0^B({}_B A)$.
\end{lemma}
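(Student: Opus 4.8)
The key tool will be a graded right $C$-linear splitting of the inclusion $C\hookrightarrow A$. Since $C=\im(f)$ is a direct summand of $A$ as a graded right $B$-module, I fix a graded right $B$-module projection $\rho\colon A\to C$ with $\rho|_C=\id$. Because the right $B$-action on $A$ factors through $f$ and $C=f(B)$, right $B$-linearity upgrades to right $C$-linearity: $\rho(ac)=\rho(a)\,c$ for all $a\in A$ and $c\in C$. I also record the identification $A\otimes_B\Bbbk = A/AC_{\ge1}=A/J$, which comes from $f(B_{\ge1})=C_{\ge1}$; hence $\deg(A/J)=t^B_0(A_B)$. Throughout, the assertions about the algebra $C$ are read through the standard fact $\beta(C)=\deg\big(C_{\ge1}/(C_{\ge1}C_{\ge1})\big)$, so the target in (2) and (3) is to show $C_n\subseteq C_{\ge1}C_{\ge1}$ for $n>g$.

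For part (2), suppose $J=\sum_\lambda Ac_\lambda$ with $c_\lambda\in C_{\ge1}$ of degree $\le g$. Given homogeneous $c\in C_n$ with $n>g$, I use $c\in C_{\ge1}\subseteq J$ to write $c=\sum_\lambda a_\lambda c_\lambda$ with $a_\lambda\in A$ homogeneous of degree $n-\deg c_\lambda\ge n-g\ge1$, so $a_\lambda\in A_{\ge1}$. Applying $\rho$ and using $\rho|_C=\id$ together with right $C$-linearity gives $c=\rho(c)=\sum_\lambda\rho(a_\lambda)\,c_\lambda$, where each $\rho(a_\lambda)\in C$ has degree $\ge1$. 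Thus $c\in C_{\ge1}C_{\ge1}$, proving $\beta(C)\le g$. The only point to watch is that $\rho$ is right-linear, which is exactly why the $c_\lambda$ must sit on the right of each term.

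For part (1), I first bound the generating degrees of $J$ as a left $A$-module. From the short exact sequence $0\to J\to A\to A/J\to0$ and $\Tor^A_1(\Bbbk,A)=0$, the connecting map shows $\Tor^A_0(\Bbbk,J)$ is, in positive degrees, a quotient of $\Tor^A_1(\Bbbk,A/J)$; since $J\subseteq A_{\ge1}$ this gives $t^A_0(J)\le t^A_1(A/J)$. Because $A$ is generated in degree $1$ and $A/J$ is concentrated in degrees $\le t:=\deg(A/J)$, a minimal-cover computation yields $t^A_1(A/J)\le t+1$ (for $n>t+1$ the degree-$n$ part of a minimal cover of $A/J$ equals $A_1$ times its degree-$(n-1)$ part, forcing the first syzygies into degrees $\le t+1$). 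Therefore $J/A_{\ge1}J$ vanishes above degree $t+1=t^B_0(A_B)+1$. As $J=AC_{\ge1}$, the images of $C_{\ge1}$ span $J/A_{\ge1}J$, and in each degree only the degree-$0$ coefficients survive, so the images of $(C_{\ge1})_{\le t+1}$ already span $J/A_{\ge1}J$; lifting a homogeneous basis via graded Nakayama produces a generating set of $J$ lying in $(C_{\ge1})_{\le t+1}$. This proves (1), and feeding $g=t^B_0(A_B)+1$ into (2) gives (3).

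Finally, the statements for the right ideal $C_{\ge1}A$ and for $t^B_0({}_BA)$ follow by the left--right symmetric argument, using the left $B$-module structure on $A$ and a graded left $C$-linear projection. I expect the main obstacle to be bookkeeping the handedness: part (2) genuinely needs the $C_{\ge1}$ factor on the correct side for $\rho$ to pull through, and part (1) needs both the degree-$1$ generation of $A$ (for the bound $t^A_1(A/J)\le\deg(A/J)+1$) and the Nakayama step to replace arbitrary degree-$\le t+1$ generators of $J$ by ones actually lying in $C_{\ge1}$, which is precisely what makes (1) usable in (2).
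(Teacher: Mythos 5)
Your proof is correct and follows essentially the same route as the paper's: the identification $A\otimes_B\Bbbk = A/AC_{\geq 1}$ giving $A_{\geq g}\subseteq J$ for $g=t_0^B(A_B)+1$, degree-one generation of $A$ to bound the generating degrees of $J$, and the right $B$-module splitting --- your projection $\rho$ is exactly the paper's decomposition $A_B=C_B\oplus D_B$, with right $C$-linearity of $\rho$ playing the role of the observation that $D$ is stable under right multiplication by elements of $C=f(B)$ --- to replace $A$-coefficients by $C$-coefficients. The only differences are cosmetic: you extract generators of $J$ lying in $C_{\geq 1}$ via graded Nakayama where the paper uses an explicit cancellation argument on a basis of $\bigoplus_{i\leq g}J_i$ (your version is arguably cleaner), and your detour through the Tor long exact sequence ($t_0^A(J)\leq t_1^A(A/J)$) is redundant, since $t_1^A(A/J)=t_0^A(J)$ by definition of a minimal cover and your subsequent "minimal-cover computation" is just the paper's direct argument that $J_n=A_1J_{n-1}$ for $n>g$.
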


\begin{proof} 
(1) Let $g= t_0^B(A_B) + 1$. Since 
\[ \Tor_0^B(A_B, \Bbbk) = A \otimes_B \Bbbk 
= A \otimes_B B/B_{\geq 1} = A/AB_{\geq 1}=A/AC_{\geq 1}=A/J
\]
is equal to zero in degrees $\geq g$, therefore 
$J\supseteq A_{\geq g}$. Every element in $J$ of degree $>g$ is 
generated by $A_{g}$ since $A$ is generated in degree 1. Therefore
$J$ is generated by $\bigoplus_{i=1}^{g} J_i$ as a left ideal 
of $A$. Let $\{f_1,\cdots,f_r\}$ be a basis of
$\bigoplus_{i=1}^{g} J_i$, and write 
$$f_i=\sum_{j=1}^n h_{ij} u_j, \quad h_{ij}\in A, 1\leq i\leq r$$
where $u_1,\cdots,u_n$ are in $C_{\geq 1}$. We may assume that
all elements are homogeneous and for each $u_j$ there is at least
one $i$ such that $h_{ij} u_j\neq 0$. By removing all terms 
which can be canceled, we have that $\deg f_i=\deg h_{ij} \deg u_j$
which shows that $\deg u_i\leq \deg f_i\leq g$. It is clear
that $J$ is generated by $u_1,\cdots,u_n$ as desired.

(2) Suppose that $J$ is generated as a left ideal of $A$ by 
$u_1,\cdots, u_n\in C_{\geq 1}$ where $\deg u_i \leq r$ for all 
$i$. We claim that $C_{\geq 1}$ is generated as a left ideal of 
$C$ by $u_1,\cdots, u_n$. Let $f\in C_{\geq 1}\subseteq J$. Then 
$$f=g_1 u_1+\cdots+ g_n u_n$$
where each $g_i\in A$. By hypothesis, $C$ is a direct summand 
of $A$ as a right $B$-module. Write
\[ A_B = C_B \oplus D_B.\]
Then each $g_i$ can be written in the form 
$g_i = \gamma_i + \gamma'_i$ where $\gamma_i \in C$ and 
$\gamma'_i \in D$. Hence
\[ f = (\gamma_1 u_1 + \dots + \gamma_n u_n ) + 
(\gamma'_1 u_1 + \dots + \gamma'_n u_n)\]
and since $f \in C$, 
\[ f = \gamma_1 u_1 + \dots + \gamma_n u_n.
\]
Therefore, $C_{\geq 1}$ is generated by $\{u_i\}$ as a left ideal 
of $C$. It follows that $C$ is generated as an algebra by 
$\{u_i\}$ so $\beta(C) \leq r$.

(3) This is an immediate consequence of parts (1) and (2).
\end{proof}

\begin{corollary}
\label{xxcor3.3}
Let $A$ be a connected graded algebra and let $H$ be a 
semisimple Hopf algebra acting on $A$ homogeneously. Then 
$\beta(A^H) \leq \tau_H(A)$. 
\end{corollary}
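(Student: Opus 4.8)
The plan is to deduce Corollary~\ref{xxcor3.3} directly from Lemma~\ref{xxlem3.2} by choosing the algebra map $f$ and the summand appropriately. Specifically, I would take $B = A^H =: R$ and let $f\colon R \hookrightarrow A$ be the inclusion of the invariant subring. With this choice the image $C = \im(f)$ is exactly $R$, so the conclusion $\beta(C) \le t_0^B(A_B) + 1$ of Lemma~\ref{xxlem3.2}(3) becomes $\beta(A^H) \le t_0^R(A_R) + 1$, and by Definition~\ref{xxdef1.1}(2) the right-hand side is precisely $\tau_H(A)$, since $\tau_H(A) = 1 + t_0^R(A_R)$.

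To apply Lemma~\ref{xxlem3.2} I must verify its two hypotheses: that $A$ is generated in degree $1$, and that $C = R$ is a direct summand of $A$ as a right $R$-module. The first hypothesis is the main obstacle, because Corollary~\ref{xxcor3.3} is stated for an arbitrary connected graded $A$, with no generation-in-degree-one assumption, whereas Lemma~\ref{xxlem3.2} explicitly requires $A$ to be generated in degree~$1$. Inspecting the proof of Lemma~\ref{xxlem3.2}, the generation-in-degree-one hypothesis is used only in part~(1), to argue that every element of $J$ of degree $> g$ is generated by $A_g$ and hence that $J$ is generated in degrees $\le g$. For the present application I do not actually need part~(1): I can bound $\beta(A^H)$ directly. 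The key point is that $J = A R_{\ge 1} = J_H(A)$ is the left Hilbert ideal, and $\tau_H(A) = 1 + \deg(A/J_H(A))$ by Definition~\ref{xxdef1.1}(2), so $A/J_H(A)$ vanishes in degrees $\ge \tau_H(A)$; thus $J$ contains $A_{\ge \tau_H(A)}$ even without generation in degree $1$.

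So the cleaner route is to invoke only Lemma~\ref{xxlem3.2}(2), which has no generation-in-degree-one hypothesis. I would set $g = \tau_H(A)$ and argue that $J_H(A) = A R_{\ge 1}$ is generated, as a left ideal of $A$, by elements of $R_{\ge 1}$ of degree $\le \tau_H(A)$: indeed, since $\deg(A/J_H(A)) = \tau_H(A) - 1$, every homogeneous generator of $R_{\ge 1}$ in degree $> \tau_H(A)$ already lies in $A \cdot (R_{\ge 1})_{\le \tau_H(A)}$, so the generators in degrees $\le \tau_H(A)$ suffice. Then the hypothesis of part~(2) is met with $g = \tau_H(A)$, and part~(2) yields $\beta(R) = \beta(A^H) \le \tau_H(A)$.

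The second hypothesis, that $R$ is a right $R$-module direct summand of $A$, holds because $H$ is semisimple: the Reynolds-type averaging operator $a \mapsto \int \cdot a$ given by the normalized integral of $H$ is an $R$-bimodule projection of $A$ onto $A^H = R$, exhibiting the desired splitting $A_R = R_R \oplus D_R$. The main obstacle is thus reconciling the generality of the corollary's statement with the stronger hypotheses of the lemma; I expect this to be resolved exactly as above, by routing through Lemma~\ref{xxlem3.2}(2) rather than~(3) and supplying the degree bound on the generators of $J_H(A)$ directly from the definition of $\tau_H(A)$, so that the generation-in-degree-one assumption is never needed.
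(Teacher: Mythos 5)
Your first paragraph is exactly the paper's proof: take $B = C = A^H =: R$, let $f$ be the inclusion, note that semisimplicity of $H$ (via the normalized integral) makes $R$ a right $R$-module direct summand of $A$, and apply Lemma~\ref{xxlem3.2}(3), using $t_0^R(A_R)+1 = \tau_H(A)$ from Definition~\ref{xxdef1.1}(2). You are also right that there is a hypothesis mismatch: Lemma~\ref{xxlem3.2} assumes $A$ is generated in degree $1$, while Corollary~\ref{xxcor3.3} does not say so. But your proposed repair is where the argument breaks. The claim that ``every homogeneous generator of $R_{\geq 1}$ in degree $> \tau_H(A)$ already lies in $A\cdot (R_{\geq 1})_{\leq \tau_H(A)}$'' does not follow from $\deg(A/J_H(A)) = \tau_H(A)-1$. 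That equality only tells you $A_{\geq \tau_H(A)} \subseteq A R_{\geq 1}$, i.e., any such element can be written as $\sum_i a_i s_i$ with $s_i \in R_{\geq 1}$; it gives no control on $\deg s_i$ (for $r \in R$ of high degree, the expression $r = 1\cdot r$ may be the only one available). Pushing degree out of the $R$-factors and into the $A$-factors is precisely the step in the proof of Lemma~\ref{xxlem3.2}(1) where generation of $A$ in degree $1$ is used, so you cannot route around it for free via part~(2).

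In fact no argument can remove that hypothesis, because the unqualified statement is false. Take $A = \Bbbk[y]$ with $\deg y = 2$ and $\ch \Bbbk \neq 2$, and let $H = \Bbbk({\mathbb Z}/(2))$ act homogeneously by $\sigma(y) = -y$. Then $R = A^H = \Bbbk[y^2]$ is generated by an element of degree $4$, so $\beta(R) = 4$, while $J_H(A) = A y^2 = A_{\geq 4}$, so $\deg(A/J_H(A)) = 2$ and $\tau_H(A) = 3 < \beta(R)$. Note also that $(R_{\geq 1})_{\leq 3} = 0$, so $J_H(A)$ is certainly not generated by invariants of degree $\leq \tau_H(A)$: your intermediate claim fails exactly here, even though your summand condition holds ($A = R \oplus yR$ as right $R$-modules). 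The correct reading is that Corollary~\ref{xxcor3.3} tacitly inherits the generated-in-degree-$1$ hypothesis of Lemma~\ref{xxlem3.2}; with that hypothesis restored, your first paragraph alone is a complete proof and coincides with the paper's.
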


\begin{proof}
This follows from Lemma~\ref{xxlem3.2} by taking $B = C = A^H$ 
and $f$ to be the natural inclusion into $A$. 
\end{proof}

In many cases $\tau_H(A)$ is easier to compute than $\beta(A^H)$. 
Furthermore the bound can be sharp.

\begin{example}
\label{xxex3.4}
Let $T$ be the down-up algebra 
$$T: = A(0,1)= \Bbbk\langle x,y \rangle/
(x^2y = yx^2, xy^2 = y^2x ),$$ 
which is an AS regular algebra of dimension 3.  Let $\sigma$ be 
the automorphism of $T$ defined by $\sigma(x)=-x$ and $\sigma(y)=y$.
Then $\sigma$ generates a group $G$ of order 2 that acts on $T$. 
One can check that a basis for the invariants of degree $\leq 3$ 
is given by:
$$y \; (\deg 1), \quad
x^2,y^2 \; (\deg 2), \quad
x^2y(=yx^2), xyx, y^3 \; (\deg 3).$$
Taking left (respectively, right) multiples of these 
invariants and determining the dimension of $J_G(T)_3$, 
it is not hard to check that $\tau_G(T) = \tau_G^{\op}(T) = 3$, 
and so by Corollary~\ref{xxcor3.3}, $\beta(T^G) \leq 3.$

One can also check that the invariant $xyx$ of degree $3$ is 
not generated by the lower degree invariants, so that 
$\beta(T^G) = 3= \tau_G(T)$.
\end{example}

As a corollary of Lemma~\ref{xxlem3.2}, we obtain the 
following degree bound.

\begin{theorem}
\label{xxthm3.5}
Suppose that $(T,H)$ satisfies Hypothesis \ref{xxhyp0.3}. 
Assume further
\begin{enumerate}
\item[(a)]
$T$ is a noetherian AS regular domain generated 
in degree 1 such that $T\# H$ is prime.
\item[(b)]
$T^H$ has finite global dimension.
\end{enumerate}
Then the following hold. 
\begin{enumerate}
\item[(1)]
$\beta(T^H)\leq \dim H$.
\item[(2)]
Suppose $\beta(T^H)=\dim H$ and that $\Bbbk$ is algebraically 
closed of characteristic 0. Then $H=\Bbbk G$ where
$G={\mathbb Z}/(d)$ for $d=\dim H$, the $G$-action on $T$ is 
faithful, and $G$ is generated by a quasi-reflection of $T$ 
in the sense of \cite[Definition 2.2]{KKZ1}.
\end{enumerate}
\end{theorem}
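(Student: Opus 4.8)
The plan is to prove part (1) first, then bootstrap part (2) from the equality case.

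For part (1), the key is that under hypotheses (a) and (b), the invariant ring $T^H$ is itself AS regular: it has finite global dimension by (b), and it admits a balanced dualizing complex and is Cohen--Macaulay by Lemma~\ref{xxlem2.6}, with the AS Gorenstein property inherited from $T$. Once I know $T^H$ is AS regular, I can invoke Example~\ref{xxex1.6} directly: that example shows precisely that under exactly these hypotheses (noetherian AS regular domain generated in degree $1$, $T^H$ of finite global dimension, $T\#H$ prime), one has $\tau_H(T)\leq \dim H$. Combining with Corollary~\ref{xxcor3.3}, which gives $\beta(T^H)\leq \tau_H(T)$, yields $\beta(T^H)\leq \dim H$. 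So part (1) should follow essentially by assembling Example~\ref{xxex1.6} and Corollary~\ref{xxcor3.3}; the only thing to verify is that $T^H$ is genuinely AS regular so that Proposition~\ref{xxpro1.5} (freeness) applies, which is what makes the rank-counting argument in Example~\ref{xxex1.6} go through.

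For part (2), the hypothesis is that the bound is attained: $\beta(T^H)=\dim H=:d$. I would trace back through the chain of inequalities $\beta(T^H)\leq \tau_H(T)\leq d$ to conclude that both must be equalities, so $\tau_H(T)=d$ as well. Now I want to extract rigid structural consequences from $\tau_H(T)=d$. By Proposition~\ref{xxpro1.5}, $T$ is free over $R:=T^H$ of rank $d$, so $M:=T\otimes_R R/R_{\geq 1}$ has dimension $d$; saturating at degree $d$ means the top nonzero graded piece of $T/J_H(T)$ sits in degree exactly $d-1$, forcing $\dim M_i=1$ for each $0\leq i\leq d-1$ (a "staircase" with all steps of width one). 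I would argue that this one-dimensional-per-degree behavior, combined with the Hilbert series relation from Proposition~\ref{xxpro1.8} (the product of generator degrees equals $\dim H$), pins down that $T^H$ must be generated by a single element in the relevant degree pattern --- concretely that $R$ looks like a Veronese-type subring and the induced action is by a cyclic group acting through a single character.

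To finish, I would use the characteristic-$0$, algebraically-closed hypothesis to pass from the abstract Hopf algebra $H$ to a group algebra. The extremality $\beta(T^H)=\dim H$ together with $\gldim T^H<\infty$ says $T^H$ is AS regular and, via Proposition~\ref{xxpro1.8}(3), the single generator-degree product equals $\dim H$; combined with the quasi-reflection theory, this should force $H=\Bbbk G$ for a cyclic group $G=\mathbb{Z}/(d)$ acting faithfully by a quasi-reflection in the sense of \cite[Definition 2.2]{KKZ1}. The main obstacle I expect is \emph{part (2)}: deducing that the Hopf algebra is actually a cyclic group algebra --- rather than merely that its dimension and invariant-theoretic numerics coincide with such --- requires genuinely using semisimplicity over an algebraically closed characteristic-$0$ field to rule out noncommutative or non-group-like $H$, and identifying the generator as a quasi-reflection requires matching the one-dimensional fixed-locus picture (one generator of degree $d$, the rest fixed) against the definition of quasi-reflection in \cite{KKZ1}. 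I would isolate that rigidity step as the crux and expect the homological-determinant bookkeeping of Lemma~\ref{xxlem2.15} to play a role in certifying that the action is by a quasi-reflection.
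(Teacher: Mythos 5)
Your part (1) is correct and is exactly the paper's argument: $T^H$ is AS regular (finite global dimension plus the AS Gorenstein property from Lemma~\ref{xxlem2.6}/\cite{KKZ3}), so Proposition~\ref{xxpro1.5} gives freeness, Example~\ref{xxex1.6} gives $\tau_H(T)\leq\dim H$, and Corollary~\ref{xxcor3.3} finishes. Your opening moves for part (2) also match the paper: equality forces $\tau_H(T)=d$, and the freeness of $T$ over $R:=T^H$ of rank $d$ forces the Hilbert series of $M=T\otimes_R R/R_{\geq 1}$ to be $1+t+\cdots+t^{d-1}$.

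However, part (2) has a genuine gap, and it is precisely the step you flag as ``the crux'' without supplying: deducing that $H$ is a cyclic group algebra. The paper's argument is quite specific and none of your proposed tools deliver it. From the staircase one gets $\dim R_1=\dim T_1-1$, hence an $H$-module decomposition $T_1=(T^H)_1\oplus N$ with $N$ one-dimensional; thus $T_1$ is a sum of one-dimensional $H$-modules, so the commutator ideal $[H,H]$ (a Hopf ideal) kills $T_1$, and since $T$ is generated in degree $1$ it kills all of $T$. The decisive trick is then to apply part (1) \emph{again}, to the quotient Hopf algebra $\overline{H}=H/[H,H]$ acting on $T$ with the same invariants: extremality gives $\dim H=\beta(T^{\overline H})\leq\dim\overline H$, forcing $[H,H]=0$, i.e., $H$ commutative. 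Only then does semisimplicity over an algebraically closed field enter, to write $H=(\Bbbk K)^*$, so that $\Bbbk K$ \emph{coacts} on $T$; cyclicity of $K$ is not a Hilbert-series computation but comes from the dual reflection group machinery of \cite[Theorem 3.5]{KKZ4}, which shows $K$ is generated by the $K$-degree of $N$, that a basis element $x$ of $N$ is normal, and that $T=\bigoplus_{i=0}^{d-1}x^iT^H$. Your alternative suggestions do not substitute for this: Proposition~\ref{xxpro1.8}(3) needs Hilbert-series hypotheses not available here and is not used in the paper's proof, and Lemma~\ref{xxlem2.15} (homological determinant) is not how the quasi-reflection property is certified --- the paper instead computes the trace series of the generator $\sigma$ explicitly, using $\sigma(x)=\lambda x$ and the decomposition $T=\bigoplus x^iT^H$, to exhibit ${\rm Tr}_T(\sigma)=\frac{1}{(1-t)^{n-1}(1-\lambda t)p(t)}$, which is literally \cite[Definition 2.2]{KKZ1}. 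So while your outline of part (2) identifies the right starting data, the three essential ingredients --- the $[H,H]$-plus-extremality argument for commutativity, the comodule/KKZ4 argument for cyclicity and normality of $x$, and the trace computation for the quasi-reflection claim --- are all absent.
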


\begin{proof}
(1) This follows from Example~\ref{xxex1.6} and 
Corollary~\ref{xxcor3.3}.

(2) By Proposition \ref{xxpro1.5}, $T$ is free over $R:=T^H$
of rank $d:=\dim H$; moreover, the cyclic graded left $T$-module 
$M:=T \otimes_R (R/R_{\geq1})$ has $\Bbbk$-dimension $d$ and 
$M_i \neq 0 $ for $i = 0, \dots, \tau_H(T) -1$. But by 
Corollary~\ref{xxcor3.3} and Example~\ref{xxex1.6} we have 
$\dim H = \beta(R) \leq \tau_H(T) \leq \dim H$, so 
$\tau_H(T) = \dim H$. Hence $\deg M=d-1$ and the Hilbert series 
of $M$ is $1+t+t^2+\cdots +t^{d-1}$, or equivalently,
$\dim (T R_{\geq 1})_i = \dim T_i - 1$ for all $0 \leq i \leq d-1$.

As a result, as an $H$-representation, $T_1 = (T^H)_1 \oplus N$ 
where $N$ is a one-dimensional simple $H$-module, and so $T_1$ 
is the direct sum of one-dimensional $H$-modules. Hence, 
$[H,H] \cdot T_1 = 0$ (where $[H,H]$ is the commutator ideal of
$H$) and since $[H,H]$ is a Hopf ideal and $T$ is generated in 
degree 1, we have $[H,H] \cdot T = 0$. Therefore, the quotient 
Hopf algebra $\overline{H} = H/[H,H]$ acts on $T$. By part (1), 
$$\dim(H) = \beta(T^H) = \beta(T^{\overline{H}})  
\leq \dim(\overline{H}),$$ 
so $\dim(\overline{H}) = \dim (H)$. Therefore, $[H,H] = 0$ and 
$H$ is commutative.

Since $H$ is semisimple and commutative, it is the dual of a 
group algebra $H = (\Bbbk K)^*$ for some finite group $K$. 
Therefore, $\Bbbk K$ coacts on $T$ and the direct sum 
decomposition $T_1 = (T^H)_1 \oplus N$ is also a decomposition 
as $\Bbbk K$-comodules. By \cite[Theorem 3.5(3,4)]{KKZ4}, 
$K$ is generated by the $K$-grading of $N$, so $K = \mathbb{Z}/(d)$ 
is a cyclic group of order $d$.  

Let $x$ be a basis element of $N$ and let $\tau$ be the $K$-degree 
of $x$.  By \cite[Theorem 3.5(1)]{KKZ4}, $x=f_{\tau}$ which is a 
generator of the $T^H$-module $T_{\tau}$ (in the notation of 
\cite{KKZ4}). Since $T_1 = (T^H)_1 \oplus N=(T^H)_1 \oplus \Bbbk x$, by 
\cite[Theorem 3.5(1)]{KKZ4}, $x$ is a normal element in $T$. Further, 
for each $0\leq i\leq d-1$, $x^i$ equals $f_{\tau^i}$ which is a 
generator of $T_{\tau^i}$.
By \cite[Theorem 3.5(2)]{KKZ4}, $TR_{\geq 1}$ is a 2-sided ideal
of $T$ and $M$ is isomorphic to the graded algebra
$\Bbbk[x]/(x^d)$ with an induced $K$-comodule structure such that
$M$ is a free $\Bbbk K$-comodule of rank one. As a consequence,
$T = \bigoplus_{i=0}^{d-1} x^i T^H=\bigoplus_{i=0}^{d-1} T^H x^i$.
Write $h_T(t)=\frac{1}{(1-t)^n p(t)}$ and $h_{T^H}(t)=\frac{1}{(1-t)^nq(t)}$
where $n=\GKdim T=\GKdim T^H>0$ 
and $p(1), q(1) \neq 0$ (see \cite[Definition 2.2]{KKZ1}). 
Then 
\begin{align*}
\label{E3.5.1}\tag{E3.5.1}
h_{T}(t) &=\frac{1}{(1-t)^n p(t)}=(1+t+\cdots+t^{d-1}) h_{T^H}(t)\\
&=\frac{1+t+\cdots+t^{d-1}}{(1-t)^n q(t)}=\frac{(1-t^d)}{(1-t)^{n+1} q(t)}.
\end{align*}

Since $\Bbbk$ is algebraically closed of characteristic 0, 
$(\Bbbk K)^*$ is isomorphic to $\Bbbk G$ where $G$ is a group 
that is also isomorphic to $\mathbb{Z}/(d)$. Since 
the $\Bbbk K$-coaction on $M$ is free (hence faithful),
the $\Bbbk G$-action on $M$ is free (hence faithful).
As a consequence, the $\Bbbk G$-action on $T$ is faithful.

Viewing $H = \Bbbk G$, suppose that $G$ is generated by the 
automorphism $\sigma$ of $T$. Examining the action of $\sigma$ 
on $T_1 = (T^H)_1 \oplus \Bbbk x$, we have $\sigma(x) = \lambda x$ where 
$ \lambda$ is a primitive $d$th root of unity. As a right $T^H$-module, 
$T = \bigoplus_{i=0}^{d-1} x^i T^H$ and $\sigma$ acts on $x^i T^H$ 
by scaling by $ \lambda^i$. Then 
$$\begin{aligned}
{\text{Tr}}_T(\sigma)&\;\;\;\;=\;\;\;\;
\sum_{i=0}^{d-1}  \lambda^i t^i h_{T^H}(t)=\frac{\sum_{i=0}^{d-1}  \lambda^i t^i}
{(1-t)^n q(t)}\\
&\;\;\;\;=\;\;\;\; \frac{(1-( \lambda t)^d)}{(1- \lambda t) (1-t)^n q(t)}=\frac{(1-t^d) (1-t)}
{(1- \lambda t)(1-t)^{n+1} q(t)}\\
&\overset{\eqref{E3.5.1}}{=}
\frac{(1-t)}{(1- \lambda t)(1-t)^{n} p(t)} \\
&\;\;\;\; =\;\;\;\; \frac{1}{(1-t)^{n-1}(1- \lambda t) p(t)}.
\end{aligned}
$$
By \cite[Definition 2.2]{KKZ1},
$\sigma$ is a quasi-reflection.
\end{proof}

A bound on $\beta(T^H)$ would be useful in projects
such as  \cite{FKMW1, FKMW2}, where $T^H$ has finite 
global dimension and the generators of $T^H$ were 
determined explicitly.

We now give a family of examples of AS regular algebras 
$T$ and groups $G$ that show that $\beta(T^G)$ can be 
arbitrarily larger than $|G| = \dim \Bbbk G$.

\begin{example}
\label{xxex3.6}
Let $m$ be a fixed positive integer larger than 2. Let 
${\mathfrak g}$ be the free Lie algebra generated by
$x$ and $y$. We consider ${\mathfrak g}$ as a graded Lie
algebra. The universal enveloping algebra
$S:=U({\mathfrak g})$ is isomorphic to the free algebra
$\Bbbk \langle x,y \rangle$ as graded algebras by assigning 
$\deg x=\deg y=1$.

Consider the graded quotient Lie algebra
$${\mathfrak g}_m={\mathfrak g}/{\mathfrak g}_{\geq m}.$$
Namely, ${\mathfrak g}_m$ is a graded 
Lie algebra generated by $x,y$ and
subject to relations 
$$[a_1,[a_2,[\cdots, [a_{m-1},a_m]\cdots]]]=0$$
for all $a_i$ being $x$ or $y$. 
Then $\mathfrak g$ is a finite-dimensional graded Lie algebra. 
Its universal enveloping algebra
$T = U({\mathfrak g}_{m})$ is a noncommutative 
noetherian AS regular algebra. 

There is a natural surjective Lie algebra map ${\mathfrak g}
\to {\mathfrak g}_m$ that induces a surjective graded algebra 
map $\phi: S \to T$. Since every 
relation in $T$ has degree (at least) $m$, we have that
$\phi$ is a bijective map for degree less than $m$. 
Hence $S/S_{\geq m}=T/T_{\geq m}$. 

Now let $\sigma$ be the automorphism of ${\mathfrak g}$
sending $x$ to $-x$ and $y$ to $y$. Then $\sigma$ has order
2, and it induces order 2 graded algebra automorphisms
on $S$, $T$, $S/S_{\geq m}$ and $T/T_{\geq m}$.
Let $G=\langle \sigma\rangle$. We claim that
$\beta(T^G)\geq m-1$. To see this, we use 
Example \ref{xxex1.3} and Lemma \ref{xxlem3.1} as below.
$$\begin{aligned}
\beta(T^G) &\overset{{\text{Lem \ref{xxlem3.1}(1)}}}{\geq}
\beta (T^G/(T^G)_{\geq m})   \\
&\overset{{\text{Lem \ref{xxlem3.1}(4)}}}{=}
\beta((T/T_{\geq m})^G) =\beta((S/S_{\geq m})^G)\\
&\overset{{\text{Lem \ref{xxlem3.1}(4)}}}{=}
\beta(S^G/(S^G)_{\geq m}) \\
&\overset{{\text{Lem \ref{xxlem3.1}(4)}}}{=}
\deg (\Tor^{S^G}_1(\Bbbk,\Bbbk)_{\leq m-1})\\
&\overset{{\text{Exam \ref{xxex1.3}}}}{=}
m-1. \\
\end{aligned}
$$

Since $m$ can be arbitrarily large, there is no uniform bound 
for $\beta(T^G)$ which depends only on $|G|$ when we consider 
all noetherian AS regular algebras $T$.
\end{example}

\subsection{Using central subrings to bound $\beta(R)$}
\label{xxsec3.2}
In this subsection, we obtain bounds on degrees of invariants 
for certain noncommutative rings by leveraging central 
subrings. We first note that unlike group actions, Hopf actions need not
preserve the center of $T$.

\begin{example}
\label{xxex3.7}
Let $H$ be the eight-dimensional Kac--Palyutkin semisimple Hopf 
algebra. As an algebra, $H$ is generated by $x$, $y$, and $z$ 
subject to the relations
\[ x^2 = y^2 = 1, xy = yx, zx = yz, zy = xz, z^2 
= \frac{1}{2}(1 + x + y -xy)
\]
with coalgebra structure given by
\[ \Delta(x) = x \otimes x, \Delta(y) = y \otimes y, \Delta(z) 
= \frac{1}{2}(1 \otimes 1 + 1 \otimes x 
+ y \otimes 1 - y \otimes x)(z \otimes z)
\]
and
\[ \varepsilon(x) = \varepsilon(y) = \varepsilon(z) = 1.
\]
Let $$T:= \frac{\Bbbk \langle u,v \rangle}{(u^2-v^2).}$$  
This AS regular algebra $T$ is isomorphic to $\Bbbk_{-1}[x_1,x_2]$ 
and  has a basis of elements of the form $u^i(vu)^jv^k$ where 
$i,j \in \mathbb{N}$ and $k=0,1$. By \cite[Section 2]{FKMW1} 
$H$ acts on $T$  with the actions of the generators $x,y,z$ 
of $H$ satisfying
$$x.uv=-uv,\quad x.vu= -vu \quad y.uv= -uv \quad y.vu = -vu$$ 
$$ z.uv=-vu \quad z.vu = uv.$$ 
This action does not preserve the center of $T$, as $u^2$ 
and $uv+vu$ are central, but $z.(uv+vu) = -vu+uv$ is not. 
\end{example}

Our first result uses a noncommutative generalization of 
Broer's bound. Broer's upper bound on $\beta(A^G)$ when 
$A:= \Bbbk[x_1, \dots, x_n]$  (proved in \cite{Br} 
when $A^G$ is Cohen--Macaulay and extended to the modular 
case in \cite[Corollary 0.3] {Sy})  was generalized in 
\cite{KKZ6} to quantum polynomial algebras (namely, noetherian 
AS regular domains with global dimension $n$ and Hilbert 
series $1/(1-t)^n$ for some $n$) and used to obtain bounds 
on minimal generators of  $\Bbbk_{-1}[x_1, \dots, x_n]^G$ 
for permutation representations $G$ (providing an analogue 
of G\"{o}bel's theorem \cite{Go}). We can use this result 
to obtain a bound for any group $G$ acting on 
$\Bbbk_{-1}[x_1, \dots, x_n]$.

\begin{lemma}[Broer's Bound, {\cite[Lemma 2.2]{KKZ6}}]
\label{xxlem3.8}
Let ${T}$ be a quantum polynomial algebra of dimension $n$, 
$H$ be a semisimple Hopf algebra. Suppose that 
$C \subset {T}^{H} \subset {T}$, for some graded iterated 
Ore extension 
$C = \Bbbk[f_1][f_2: \tau_2, \delta_2] \dots 
[f_n: \tau_n, \delta_n]$ such that ${T}_C$ is finitely 
generated, and $\deg f_i > 1$ for at least 2 distinct $i$'s. 
Then
$$\beta({T}^{H}) \leq \sum \deg{f_i} - n.$$
\end{lemma}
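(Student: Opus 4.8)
The plan is to bound $\beta(R)$, where $R := T^H$, by studying $R$ as a right module over the subalgebra $C$ and as an algebra generated by $C$ together with a set of $C$-module generators, rather than routing through the $\tau$-saturation estimate of Corollary~\ref{xxcor3.3}. That estimate would only give $\beta(R) \le \tau_H(T) \le \sum_i \deg f_i - n + 1$; saving the final degree is exactly the content of Broer's bound, and it is here that the hypothesis that at least two of the $f_i$ have degree $>1$ will be used. Write $D := \sum_i \deg f_i - n$.

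First I would pin down the top degree of $T/TC_{\geq 1}$. The subalgebra $C$ is a noetherian AS regular algebra of global dimension $n$ with Hilbert series $h_C(t) = \prod_{i=1}^n (1 - t^{\deg f_i})^{-1}$, and $T$ is a quantum polynomial algebra with $h_T(t) = (1-t)^{-n}$. Since $T$ and $C$ are both AS regular and $T$ is module-finite over $C$, $T$ is a free right $C$-module (cf.\ the freeness argument in Proposition~\ref{xxpro1.5}), so
$$h_{T/TC_{\geq 1}}(t) = \frac{h_T(t)}{h_C(t)} = \prod_{i=1}^n \bigl(1 + t + \cdots + t^{\deg f_i - 1}\bigr),$$
a palindromic polynomial of degree exactly $D$; thus $\deg(T/TC_{\geq 1}) = D$. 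Alternatively, Example~\ref{xxex2.10}(3) gives $\CMreg(T) = n - n = 0$ and $\CMreg(C) = n - \sum_i \deg f_i$, so $\delta(T/C) = D$ and Lemma~\ref{xxlem2.16}(2) yields $\deg(T/TC_{\geq 1}) \le D$.

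Next I would transfer this to $R$ and extract the algebra generators. As $H$ is semisimple, its normalized integral furnishes a Reynolds projection $\pi \colon T \to R$ that is a right $R$-module map, hence a right $C$-module map because $C \subseteq R$; therefore $R$ is a right $C$-module direct summand of $T$. Applying $- \otimes_C \Bbbk$ exhibits $R/RC_{\geq 1}$ as a graded direct summand of $T/TC_{\geq 1}$, so $E := \deg(R/RC_{\geq 1}) \le D$. By the graded Nakayama lemma, a homogeneous lift $\{\xi_l\}$ of a basis of $R/RC_{\geq 1}$ generates $R$ as a right $C$-module, with $\deg \xi_l \le E$. Since $C$ is generated as an algebra by $f_1, \dots, f_n$, the set $\{f_i\} \cup \{\xi_l\}$ generates $R$ as an algebra, whence $\beta(R) \le \max\{\max_i \deg f_i,\, E\}$. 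Finally, choosing $i_0$ with $\deg f_{i_0}$ maximal: since at least two indices carry degree $\ge 2$, at least one $j \ne i_0$ has $\deg f_j \ge 2$ and the other $n-2$ factors have degree $\ge 1$, so $\sum_{j \ne i_0} \deg f_j \ge n$ and hence $\max_i \deg f_i = \deg f_{i_0} \le D$. Combined with $E \le D$ this gives $\beta(R) \le D = \sum_i \deg f_i - n$.

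The main obstacle I anticipate is the verification underpinning the second paragraph: one must confirm that $C$ is genuinely AS regular of global dimension $n$ (an iterated graded Ore extension of $\Bbbk[f_1]$) and that $T$ is module-finite over $C$ on both sides, so that either the freeness of $T$ over $C$ or the hypotheses of Lemma~\ref{xxlem2.16}(2) are legitimately available. The conceptually essential move, however, is the decision to bound the algebra generators of $R$ through its $C$-module generators rather than through the Hilbert ideal; this is what avoids the spurious extra degree and makes the two-large-degrees hypothesis do exactly the work it must.
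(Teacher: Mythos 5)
The paper never proves this lemma: it is imported verbatim from \cite[Lemma 2.2]{KKZ6}, so there is no internal proof to compare against. Your argument is essentially Broer's original argument, which is what the cited source adapts to the noncommutative setting: freeness of $T$ as a right $C$-module gives $h_{T/TC_{\geq 1}}(t) = h_T(t)/h_C(t) = \prod_i \bigl(1 + t + \cdots + t^{\deg f_i - 1}\bigr)$, hence $\deg(T/TC_{\geq 1}) = \sum_i \deg f_i - n =: D$; the integral of $H$ splits $T = R \oplus \ker\pi$ as right $C$-modules (the same splitting device as in Lemma \ref{xxlem3.2}(2)), so $R/RC_{\geq 1}$ is a graded summand of $T/TC_{\geq 1}$ and has degree $\leq D$; graded Nakayama plus the algebra generators $f_1,\dots,f_n$ of $C$ then give $\beta(R) \leq \max\{\max_i \deg f_i,\, D\}$; and the hypothesis that at least two $f_i$ have degree $>1$ is used exactly where it must be, to force $\max_i \deg f_i \leq D$ (without it the statement fails, e.g.\ $\Bbbk[x]^{{\mathbb Z}/(5)} = \Bbbk[x^5]$ has $\beta = 5 > 4$). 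All of these steps are sound, and the decision to bound algebra generators through $C$-module generators rather than through the Hilbert ideal is indeed what saves the extra degree that Corollary \ref{xxcor3.3} alone would cost.

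The one point to press harder than you do is the sidedness of the finiteness hypothesis. The lemma assumes only that $T_C$ is finitely generated, but both of the justifications you point to are quoted in this paper with two-sided hypotheses: Lemma \ref{xxlem2.16}(2) and the regularity transfer \cite[Theorem 8.3]{AZ} require finite generation on both sides, and Proposition \ref{xxpro1.5} invokes \cite[Lemma 1.10]{KKZ1} only after two-sided finiteness has been established via \cite{Mo}. So your ``alternative'' route through $\delta(T/C)$ is not legitimately available as stated, and the freeness route must carry the proof: since $C$ is AS regular (an iterated graded Ore extension) of global dimension $n$, the right module $T_C$ has finite projective dimension, and the graded Auslander--Buchsbaum formula gives $\pdim_C(T_C) = \depth C - \depth_C(T_C)$; what remains is to show $\depth_C(T_C) = n$, i.e.\ to transfer the Cohen--Macaulayness of $T$ over itself to local cohomology computed over $C$, and that transfer is precisely where one-sided versus two-sided finiteness matters. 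This is not a defect peculiar to your write-up --- the same issue is buried in the cited source --- but a self-contained proof would have to settle it rather than gesture at Proposition \ref{xxpro1.5}.
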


For a more general result, see \cite[Lemma 2.1]{KKZ6}.

\begin{corollary}
\label{xxcor3.9}
Let $\Bbbk$ be an infinite field and $G$ be a finite group acting 
as graded automorphisms on $\Bbbk_{-1}[x_1, \dots, x_n]$. Then 
$$\beta(\Bbbk_{-1}[x_1, \dots, x_n]^G) \leq n(2|G| -1).$$
\end{corollary}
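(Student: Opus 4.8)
The plan is to reduce the statement to Broer's bound (Lemma~\ref{xxlem3.8}) by exhibiting a \emph{commutative} polynomial subring $C\subseteq \Bbbk_{-1}[x_1,\dots,x_n]^G$ over which $T:=\Bbbk_{-1}[x_1,\dots,x_n]$ is module-finite, and each of whose $n$ generators has degree exactly $2|G|$. The starting observation is that the squares $x_1^2,\dots,x_n^2$ are central in $T$ and generate a commutative polynomial subring $Z=\Bbbk[x_1^2,\dots,x_n^2]$, over which $T$ is a free module of rank $2^n$. (For the hypothesis of Lemma~\ref{xxlem3.8} I will take $H=\Bbbk G$, which is semisimple in the relevant non-modular setting.)

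First I would verify that $G$ preserves $Z$ and acts linearly on its generators. If $\sigma\in G$ satisfies $\sigma(x_i)=\sum_j a_{ij}x_j$, then because $x_jx_k+x_kx_j=0$ for $j\neq k$ in $\Bbbk_{-1}[x_1,\dots,x_n]$, all cross terms vanish and $\sigma(x_i^2)=\sum_j a_{ij}^2\,x_j^2$. Hence $\sigma(Z)\subseteq Z$, and since $\sigma$ is invertible $\sigma(Z)=Z$; moreover the restricted $G$-action on $Z$ is a graded action of a finite group by automorphisms of a commutative polynomial ring in $n$ variables (each of $T$-degree $2$), acting \emph{linearly} on the generators $x_i^2$. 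This is exactly the place where it matters that $T$ is the $(-1)$-skew polynomial ring: for a general graded automorphism of a noncommutative algebra the central subring generated by squares need not be stable, nor the induced action linear, so this step does not generalize verbatim.

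Next, because $\Bbbk$ is infinite I would invoke Dade's construction of a homogeneous system of parameters for the commutative invariant ring $Z^G$. Choosing generic $\Bbbk$-linear combinations $\ell_1,\dots,\ell_n$ of $x_1^2,\dots,x_n^2$ and forming the orbit products $f_i:=\prod_{g\in G} g\cdot \ell_i$, Dade's lemma guarantees that for a suitably generic choice the $f_i$ form a homogeneous system of parameters of $Z$ (this construction is valid over any infinite field, in any characteristic). Each $f_i$ is $G$-invariant, since the product runs over the whole group, and so $f_i\in Z^G\subseteq T^G$; and each has $T$-degree $2|G|$, as $\ell_i$ has $T$-degree $2$ and there are $|G|$ factors. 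Setting $C:=\Bbbk[f_1,\dots,f_n]$ gives a commutative polynomial ring, i.e.\ an iterated Ore extension with trivial twists and derivations. Because the $f_i$ are a system of parameters, $Z$ is module-finite over $C$, and since $T$ is module-finite over $Z$, the ring $T$ is module-finite over the central subring $C$; in particular $T_C$ is finitely generated.

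With $C\subseteq T^G\subseteq T$ established and $\deg f_i=2|G|>1$ for all $i$ (so for at least two indices whenever $n\geq 2$), Broer's bound (Lemma~\ref{xxlem3.8}) yields
$$\beta(T^G)\leq \sum_{i=1}^n \deg f_i - n = 2n|G|-n = n(2|G|-1),$$
as claimed; the case $n=1$ is elementary, since there $T=\Bbbk[x_1]$ and $T^G=\Bbbk[x_1^m]$ for some $m\leq |G|$, whence $\beta(T^G)=m\leq 2|G|-1$. I expect the two technical pressure points to be the verification that the squared-variable subring is $G$-stable with a linear action (immediate from the sign relations, as above) and a clean invocation of Dade's lemma over an arbitrary infinite field; the degree bookkeeping and the final application of Lemma~\ref{xxlem3.8} are then routine.
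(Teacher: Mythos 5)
Your proposal is correct and takes essentially the same route as the paper's own proof: pass to the central commutative subring $\Bbbk[x_1^2,\dots,x_n^2]$, use Dade's lemma over the infinite field (the paper's Lemma~\ref{xxlem3.10}) to produce a polynomial subring $C$ of $G$-invariant primary invariants of $T$-degree $\leq 2|G|$, and then apply Broer's bound (Lemma~\ref{xxlem3.8}) to get $\beta(T^G)\leq \sum\deg f_i-n=n(2|G|-1)$. Your write-up simply makes explicit details the paper leaves implicit --- the $G$-stability and linearity of the action on the squares, the module-finiteness of $T$ over $C$, and the $n=1$ edge case.
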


\begin{proof}
The group $G$ acts on the commutative polynomial ring 
$S:= \Bbbk[x_1^2, \dots, x_n^2]$. 
By Lemma \ref{xxlem3.10} below, $\beta(C) \leq 2|G|$
for a subring of primary invariants $C\subseteq S^G$, which is 
isomorphic to a polynomial ring. 
The bound follows from Lemma \ref{xxlem3.8}.
\end{proof}

We will have an improvement to this result in Corollary \ref{xxcor3.12}.
The following lemma is due to Dade, see \cite[p.483]{St}.

\begin{lemma}\cite[Proposition 3.4]{St}
\label{xxlem3.10}
Let $G$ be a finite group acting linearly on $\Bbbk[x_1, \dots, x_n]$
with $\deg x_i=1$ for all $i$. Assume that $\Bbbk$ is an infinite
field. Then there are $n$ primary invariants 
that have degree $\leq |G|$.
\end{lemma}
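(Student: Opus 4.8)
The plan is to prove Lemma~\ref{xxlem3.10}, the classical result of Dade producing $n$ primary invariants of degree $\leq |G|$ for a finite group acting linearly on a polynomial ring over an infinite field. Recall that a system of primary invariants is a homogeneous system of parameters consisting of $G$-invariant elements: that is, $n$ algebraically independent homogeneous invariants $\theta_1,\dots,\theta_n$ such that $\Bbbk[x_1,\dots,x_n]$ is a finitely generated module over $\Bbbk[\theta_1,\dots,\theta_n]$. The whole point of Dade's argument is to build these invariants \emph{greedily}, one at a time, keeping each new invariant in as low a degree as possible while maintaining the parameter property, and the clean bound $|G|$ emerges from a norm construction.

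First I would set up the inductive construction. Suppose $\theta_1,\dots,\theta_{k}$ have already been chosen so that their common zero locus $V(\theta_1,\dots,\theta_k)$ in affine $n$-space has dimension exactly $n-k$; equivalently, $\theta_1,\dots,\theta_k$ form part of a homogeneous system of parameters. I then want to produce a homogeneous invariant $\theta_{k+1}$ of degree $\leq |G|$ that cuts this locus down to dimension $n-k-1$. The geometric heart of the matter is a Noether-normalization-style avoidance argument: over an infinite field, one can choose a \emph{linear} form $\ell\in (x_1,\dots,x_n)$ (a generic hyperplane) that meets each of the finitely many top-dimensional components of $V(\theta_1,\dots,\theta_k)$ in codimension one. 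The key role of the hypothesis that $\Bbbk$ is infinite is precisely here: there are only finitely many irreducible components, each an improper linear condition is avoided by a Zariski-dense set of choices of $\ell$, and over an infinite field such a dense set is nonempty, so a good $\ell$ exists.

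Next I would apply the norm (orbit product) to promote the linear form $\ell$ to an invariant. Setting
$$\theta_{k+1} \;=\; \prod_{g\in G} (g\cdot \ell),$$
one obtains a homogeneous $G$-invariant of degree exactly $|G|$ (each factor $g\cdot\ell$ is again linear since $G$ acts linearly). The crucial point is that $V(\theta_{k+1})=\bigcup_{g\in G} V(g\cdot\ell)$ is a union of hyperplanes, each of which is a $G$-translate of the good hyperplane $V(\ell)$. Since $V(\theta_1,\dots,\theta_k)$ is $G$-stable (the $\theta_i$ being invariant), each translate $V(g\cdot\ell)$ cuts its top-dimensional components in codimension one just as $V(\ell)$ does. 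Therefore $V(\theta_1,\dots,\theta_k,\theta_{k+1})$ has dimension $n-k-1$, and the induction advances. After $n$ steps one has $\theta_1,\dots,\theta_n$ with $V(\theta_1,\dots,\theta_n)=\{0\}$, each of degree $\leq|G|$; the Nullstellensatz then forces $\Bbbk[x_1,\dots,x_n]$ to be module-finite over $\Bbbk[\theta_1,\dots,\theta_n]$, so these are the desired primary invariants.

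The step I expect to be the main obstacle is the prime-avoidance/generic-hyperplane argument that produces $\ell$ meeting every top-dimensional component properly, together with verifying that \emph{all} $G$-translates of $\ell$ retain this property simultaneously. The infinitude of $\Bbbk$ is exactly what makes this work, and one must be careful that the finitely many bad conditions --- one for each component of $V(\theta_1,\dots,\theta_k)$ and, after applying $G$, their translates --- still leave a nonempty choice; since there are only finitely many such conditions and each excludes a proper closed subset of the (infinite-dimensional-over-$\Bbbk$, but finite-dimensional projectively) space of linear forms, a valid $\ell$ survives. The degree bound $|G|$ is then automatic from the orbit-product construction, and the rest is the standard dimension-counting bookkeeping of Noether normalization, which I would not grind through in detail.
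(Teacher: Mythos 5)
Your proof is correct, and it is in essence the argument of Dade that the paper itself invokes: the paper offers no proof of Lemma~\ref{xxlem3.10} at all, but simply cites \cite[Proposition 3.4]{St}, adding only the remark that Stanley's proof uses the infinitude of $\Bbbk$ rather than characteristic zero. So the comparison is with Dade's argument as Stanley presents it, and there you differ in the verification, not the construction. Dade chooses the linear forms $\ell_1,\dots,\ell_n$ once and for all so that \emph{every} system of translates $g_1\ell_1,\dots,g_n\ell_n$ (with $g_i\in G$) is linearly independent --- an inductive subspace-avoidance choice available over an infinite field --- and then the conclusion is pure linear algebra: a common zero of the norms $\theta_i=\prod_{g\in G} g\cdot\ell_i$ is a common zero of some system $g_1\ell_1,\dots,g_n\ell_n$, hence is $0$, so $V(\theta_1,\dots,\theta_n)=\{0\}$ and the $\theta_i$ form a homogeneous system of parameters. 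You instead run an induction on Krull dimension, choosing each new $\ell$ to miss the finitely many top-dimensional irreducible components of $V(\theta_1,\dots,\theta_k)$; your observation that $G$-stability of this variety forces every translate $g\ell$ to miss those components simultaneously (because $G$ permutes them) is exactly the point needed, and Krull's principal ideal theorem then gives the dimension drop. The trade-off: your route needs a bit of dimension theory --- and, strictly speaking, one should take components geometrically over $\overline{\Bbbk}$ while keeping $\ell$ rational over $\Bbbk$, which is harmless since the $\Bbbk$-forms vanishing on a fixed positive-dimensional component still form a proper subspace --- whereas Dade's formulation replaces all of this by linear independence of $n$ linear forms, which is why it transports to an arbitrary infinite field with no further comment. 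Both versions extract the bound $|G|$ from the same norm construction, so your proposal is a sound, mildly more geometric rendering of the cited proof.
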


Note that the proof of \cite[Proposition 3.4]{St} uses  only
the fact that $\Bbbk$ is infinite, not the hypothesis that
$\ch \Bbbk=0$.

Our second result in this subsection applies to algebras which 
are module-finite over their centers. The first step in Fogarty's 
proof of the Noether bound in the non-modular case \cite{Fo} 
shows that the product of any $|G|$ elements of degree 1 in 
$A:=\Bbbk[x_1, \dots, x_n]$ is contained in the Hilbert ideal 
$J_G(A)$ (i.e. $\tau_G(A) \leq |G|$).  This result is proved by 
indexing these $G$ elements as $f_\alpha$ for $\alpha \in G$, 
and using the identity (sometimes called Benson's Lemma)
$$ \sum_{ \emptyset \neq S \subseteq G}(-1)^{|G-S|}
\sum_{\tau \in G} \prod_{\alpha\in S} (\tau \alpha f_\alpha )
\prod_{ \alpha \in G - S} f_\alpha \quad \in \quad J_G(A)$$
where the leftmost sum is taken over all nonempty subsets 
$S \subseteq G$. For an algebra $A$ which is module-finite over 
its center, we can use this idea of Fogarty to obtain the 
following result.

\begin{proposition}
\label{xxpro3.11}
Let $A$ be a connected graded domain and let $H$ be a 
semisimple Hopf algebra acting on $A$ homogeneously.
Suppose that
\begin{enumerate}
\item[(a)]
$A$ is a finitely generated module over a central subalgebra 
$Z$;
\item[(b)]
$Z$ is stable under the $H$-action;
\item[(c)]
$Z$ is generated as an algebra by elements of degree $\leq d$; 
\item[(d)]
$A$ is generated as a $Z$-module by elements of degree $\leq m$; and 
\item[(e)]
either $\Bbbk$ is an algebraically closed field of characteristic zero 
or $H$ is a group algebra. 
\end{enumerate}
Then
\begin{enumerate}
\item[(1)] $\beta(A^H) \leq \tau_H(A)\leq d \dim H+ m.$
\item[(2)]
For the $i$th annihilator ideal $J_{H,i}$ defined in 
Definition~\ref{xxdef1.1}(3),
$$\deg A/J_{H,i}\leq \deg A/J_{\infty}\leq d \dim H + m -1.$$
\end{enumerate}
\end{proposition}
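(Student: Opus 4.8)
The first inequality in (1), $\beta(A^H)\le\tau_H(A)$, is exactly Corollary~\ref{xxcor3.3}, so the whole proposition reduces to two ideal-theoretic inclusions: $A_{\ge d\dim H+m}\subseteq J_H(A)$ for (1), and $A_{\ge d\dim H+m}\subseteq J_\infty$ for (2). The plan is to push Fogarty's argument (Benson's identity, recalled just before the statement) through the commutative central subalgebra $Z$, which is a connected graded domain since $A$ is a domain and $Z$ is central. The key preliminary step, and the one where hypothesis (e) is essential, is to replace the $H$-action on $Z$ by an honest finite-group action. In the group algebra case $H=\Bbbk G_0$ this is immediate: take $G$ to be the image of $G_0$ in $\Aut(Z)$. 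In the algebraically closed characteristic-zero case one quotients $H$ by the largest Hopf ideal that annihilates $Z$ and applies the Etingof--Walton classification of semisimple Hopf actions on commutative domains to identify the resulting inner-faithful action with a group algebra. Either way one obtains a finite group $G$ with $|G|\le\dim H$, with $\ch\Bbbk\nmid|G|$ by semisimplicity, and with $Z^{G}=Z^{H}$.

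Next I would invoke Benson's Lemma on the commutative ring $Z$. Indexing $|G|$ homogeneous elements $\{f_\alpha\}_{\alpha\in G}$ of $Z_{\ge1}$ by the group and forming the alternating sum displayed before the proposition, one checks on the one hand that it equals $\sum_{\tau\in G}\prod_{\alpha\in G}\big((\tau\alpha)\cdot f_\alpha-f_\alpha\big)$, which vanishes because for each $\tau$ the factor with $\alpha=\tau^{-1}$ is zero; on the other hand its empty-subset term is $(-1)^{|G|}|G|\prod_\alpha f_\alpha$, while every nonempty-subset term carries a transfer $\sum_{\tau}\tau\cdot\big(\prod_{\alpha\in S}\alpha\cdot f_\alpha\big)\in Z^{G}_{\ge1}$. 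As $\ch\Bbbk\nmid|G|$, this forces $\prod_{\alpha\in G}f_\alpha\in Z^{G}_{\ge1}Z=Z^{H}_{\ge1}Z$. Since $Z$ is generated in degrees $\le d$, any homogeneous element of $Z$ of degree $\ge d|G|$ is a sum of monomials each involving at least $|G|$ positive-degree generators, so $Z_{\ge d|G|}\subseteq Z^{H}_{\ge1}Z$; because $|G|\le\dim H$, a fortiori $Z_{\ge d\dim H}\subseteq Z^{H}_{\ge1}Z$.

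To transfer this to $A$, write any homogeneous $a\in A$ of degree $\ge d\dim H+m$ as $a=\sum_j z_j b_j$, where the $b_j$ form a fixed set of $Z$-module generators of degree $\le m$ and $z_j\in Z$ is homogeneous; the surviving terms have $\deg z_j\ge d\dim H$, so $z_j\in Z^{H}_{\ge1}Z$ by the previous step. As $Z^{H}_{\ge1}$ is central, $a\in Z^{H}_{\ge1}ZA=Z^{H}_{\ge1}A=AZ^{H}_{\ge1}=:\mathfrak a$, a two-sided ideal contained in $AR_{\ge1}=J_H(A)$ because $Z^{H}_{\ge1}\subseteq R_{\ge1}$. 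Hence $A_{\ge d\dim H+m}\subseteq J_H(A)$, so $\tau_H(A)=1+\deg(A/J_H(A))\le d\dim H+m$, which with Corollary~\ref{xxcor3.3} proves (1).

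For (2), the same inclusion $A_{\ge d\dim H+m}\subseteq\mathfrak a$ suffices once I show $\mathfrak a\subseteq J_{H,i}$ for every $i$. Each $w\in Z^{H}_{\ge1}$ lies in $Z=Z(A)$, so it acts centrally on the left $A$-module ${}_A\Tor^{R}_i(A_R,\Bbbk)$; for a central element the action on $\Tor^{R}_i$ may be computed on either variable, and on the second variable $\Bbbk=R/R_{\ge1}$ the element $w\in R_{\ge1}$ acts as $0$. Thus $w$ annihilates $\Tor^{R}_i(A_R,\Bbbk)$, i.e. $Z^{H}_{\ge1}\subseteq J_{H,i}=\ann({}_A\Tor^{R}_i(A_R,\Bbbk))$, and since $J_{H,i}$ is two-sided, $\mathfrak a\subseteq J_{H,i}$. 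Intersecting over $i$ gives $\mathfrak a\subseteq J_\infty$, whence $A_{\ge d\dim H+m}\subseteq J_\infty$ and $\deg A/J_\infty\le d\dim H+m-1$; the inequality $\deg A/J_{H,i}\le\deg A/J_\infty$ is immediate since $A/J_{H,i}$ is a quotient of $A/J_\infty$. The main obstacle is precisely the first step: Benson's identity is intrinsically group-theoretic, so the whole argument only runs after the $H$-action on the center has been linearized to a finite-group action, which is exactly what hypothesis (e) provides and what ties the bound to $\dim H$ rather than to $|G|$.
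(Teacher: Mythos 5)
Your proposal is correct and follows essentially the same route as the paper's own proof: hypothesis (e) is used (directly for group algebras, via Etingof--Walton \cite[Theorem 1.3]{EW} otherwise) to linearize the $H$-action on $Z$ to a finite group action, Fogarty's Benson-identity argument then gives $Z_{\geq d|G|}\subseteq Z^{H}_{\geq 1}Z$, the module generators of degree $\leq m$ transfer this to $A_{\geq d\dim H+m}\subseteq J_H(A)$, and centrality of $Z^{H}_{\geq 1}$ yields part (2) exactly as in the paper. The only difference is presentational: you spell out the Benson computation and the two-variable Tor argument that the paper handles by citation and a one-line remark.
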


\begin{proof}
(1) First we claim that the $H$-action on $A$ induces a 
group action on $Z$. If $H$ is a group algebra, this is 
clear by Hypothesis (b). If $H$ is not a group algebra, then we 
assume that $\Bbbk$ is an algebraically closed field of 
characteristic zero. Since $Z$ is a commutative domain and $H$ 
acts on $Z$, by a result of Etingof--Walton \cite[Theorem 1.3]{EW},
the action of $H$ on $Z$ factors through a group action. Let this 
group algebra  be $\Bbbk G$, which is a quotient Hopf algebra of $H$. 

Let $g = |G|$, which is bounded by $\dim H$. Suppose that $A$ is 
generated as a module over $Z$ by homogeneous elements $x_1, \dots x_r$ 
of degree $\leq m$. By hypothesis, $G$ acts on the commutative algebra 
$Z$. We remark that since $H$ is semisimple, in the case that 
$\ch \Bbbk \neq 0$, $g = \dim H$ is invertible in $\Bbbk$. Hence, by 
Fogarty's proof \cite{Fo}, we have that all $g$-fold products of 
positive degree elements of $Z$ are contained in the left Hilbert 
ideal $J_G(Z)=J_H(Z)$ and so $J_H(Z)_i = Z_i$ for all $i \geq gd$ 
(that is, $\tau_G(Z) \leq gd$). Observe also that $J_H(Z) \subseteq 
J_H(A)$.

Now since $A = Zx_1 + Z x_2 + \dots + Z x_r$ and $\deg x_i \leq m$, 
every homogeneous element $a$ of degree at least $gd + m$ can be 
written as $a = \sum z_i x_i$, where the $z_i$ are homogeneous of 
degree at least $gd$. Since for all $i$, $z_i \in J_H(A)$, we have 
$a \in J_H(A)$. Therefore, $A_{\geq gd + m} \subseteq J_H(A)$, so 
$\tau_H(A) \leq gd +m$. By Corollary~\ref{xxcor3.3}, $\beta(A^H) 
\leq d |G| + m\leq d \dim H+m$, as desired.

(2) Since $Z^G$ is a central subalgebra of $A^H$, the actions of $Z^G$  
on the left and the right of $\Tor^{A^H}_i(A,\Bbbk)$ are the same. For every 
$i\geq gd$ and $z\in Z_i$, by the proof of part (1), we can write 
$z=\sum y_j f_j$, where $y_j\in (Z^G)_{\geq 1}$ and $f_j\in Z$. 
For each $j$, we have $y_j \cdot \Bbbk = 0$, and so 
$y_j \in J_{H,i}(A)$ for all $i$. Hence, $z \in J_{H,i}(A)$ so 
$Z_{\geq gd} \subseteq J_{H,i}(A)$. By the proof of part (1), 
$A_{\geq gd +m}$ is in the ideal generated by $Z_{\geq gd}$. 
Therefore $A_{\geq gd +m}$ is in $J_{H,i}(A)$, as desired.
\end{proof}

We remark that $A:=\Bbbk_{-1}[x_1, \dots, x_n]$ is a finite module 
over the central subalgebra $Z:= \Bbbk[x_1^2, \dots, x_n^2]$ 
and is generated as a $Z$-module by elements of degree $\leq n$. 
It is easy to see that every group action on $A$ induces an action
on $Z$. Hence, we have the following corollary, which is an 
improvement of the result of Corollary~\ref{xxcor3.9}.

\begin{corollary}
\label{xxcor3.12}
Let $G$ be a finite group acting as graded automorphisms on 
$\Bbbk_{-1}[x_1, \dots, x_n]$ and suppose that $|G|$ is invertible 
in $\Bbbk$. Then 
$$\beta(\Bbbk_{-1}[x_1, \dots, x_n]^G) \leq 2|G| + n.$$
\end{corollary}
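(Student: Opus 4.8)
The plan is to obtain this as a direct application of Proposition~\ref{xxpro3.11}(1), taking $A := \Bbbk_{-1}[x_1,\dots,x_n]$, $H = \Bbbk G$, and the central subalgebra $Z := \Bbbk[x_1^2,\dots,x_n^2]$, and then reading off the numerical bound from the specific values $d = 2$ and $m = n$. So the whole task reduces to verifying the five hypotheses of that proposition for these choices. I would first note that $A$ is a connected graded domain (being a quantum affine space) and that $\Bbbk G$ acts homogeneously since $G$ acts by graded automorphisms; moreover, as $|G|$ is invertible in $\Bbbk$, Maschke's theorem makes $\Bbbk G$ semisimple, and being a group algebra it satisfies hypothesis (e).

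Next I would record the easy structural facts. Since $x_i x_j = -x_j x_i$ for $i \neq j$, each square $x_i^2$ commutes with every generator, so $Z$ lies in the center of $A$ (hypothesis (a), centrality part); it is generated by the degree-$2$ elements $x_i^2$, giving $d = 2$ in hypothesis (c). The square-free monomials $x_{i_1}\cdots x_{i_k}$ with $i_1 < \cdots < i_k$ span $A$ as a $Z$-module, and these have degree at most $n$, so $A$ is module-finite over $Z$ generated in degrees $\le m = n$ (completing hypothesis (a) and giving hypothesis (d)).

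The one point requiring a genuine, if short, computation — which I regard as the crux — is the $G$-stability of $Z$ (hypothesis (b)). A graded automorphism $\sigma \in G$ acts linearly on $A_1$, say $\sigma(x_i) = \sum_j c_{ij} x_j$; then
\[
\sigma(x_i^2) = \Bigl(\sum_j c_{ij} x_j\Bigr)^2 = \sum_j c_{ij}^2\, x_j^2 + \sum_{j<k} c_{ij}c_{ik}\,(x_j x_k + x_k x_j) = \sum_j c_{ij}^2\, x_j^2,
\]
where the cross terms vanish because $x_j x_k = -x_k x_j$. Hence $\sigma(Z) \subseteq Z$, and as $\sigma$ is invertible this forces $\sigma(Z) = Z$; thus $Z$ is $G$-stable.

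With all five hypotheses verified, Proposition~\ref{xxpro3.11}(1) immediately yields
\[
\beta(\Bbbk_{-1}[x_1,\dots,x_n]^G) \le d\,\dim \Bbbk G + m = 2|G| + n,
\]
which is the desired bound. I expect no obstacle beyond the stability computation displayed above, since the substantive work — Fogarty's argument bounding $\tau_G(Z)$ by $d\dim H$ on the commutative subring $Z$ and its propagation to $A$ via the module generators of degree $\le m$ — is already carried out inside Proposition~\ref{xxpro3.11}.
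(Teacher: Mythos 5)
Your proposal is correct and follows exactly the paper's route: the paper derives Corollary~\ref{xxcor3.12} from Proposition~\ref{xxpro3.11}(1) with $Z = \Bbbk[x_1^2,\dots,x_n^2]$, $d=2$, $m=n$, remarking only that ``every group action on $A$ induces an action on $Z$.'' Your explicit verification of the $G$-stability of $Z$ (the vanishing of the cross terms $c_{ij}c_{ik}(x_jx_k + x_kx_j)$) is precisely the ``easy to see'' step the paper leaves to the reader, so the two arguments coincide.
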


Corollary \ref{xxcor3.12} suggests the following questions, which 
are subquestions of Question \ref{xxque0.4} (see also Questions 
\ref{xxque6.3} and  \ref{xxque6.4}).

\begin{question}
\label{xxque3.13}
Suppose $G$ is a finite group and $H$ is a semisimple Hopf algebra.
\begin{enumerate}
\item[(1)]
Is there an upper bound on $\beta(\Bbbk_{-1}[x_1, \dots, x_n]^G)$ 
that depends only upon $|G|$ and not on the dimension of the 
representation of $G$?
\item[(2)]
A Hopf action need not preserve the subring 
$S:= \Bbbk[x_1^2, \dots, x_n^2]$. If $H$ acts homogeneously 
on $\Bbbk_{-1}[x_1, \dots, x_n]$, is there an analogous bound for 
$\beta(\Bbbk_{-1}[x_1, \dots, x_n]^H)$?
\end{enumerate}
\end{question}

\section{Bounds on the degree of $\Tor_i^A(M,\Bbbk)$}
\label{xxsec4}

In the next two sections we will prove results which provide
bounds on $t^A_i(M) = \deg \Tor_i^A(M,\Bbbk)$ for various 
$A$-modules $M$. A special case is when either $A$ is $T^H$ 
(for $T$ and $H$ satisfying Hypothesis~\ref{xxhyp0.3}) or 
$A$ is a noetherian AS regular algebra mapping to $T^H$.
Recall from \eqref{E1.0.6} that we also use the notation
$$\beta_i(A)=t^A_i(\Bbbk)$$
where $\beta_1(A)=: \beta(A)$ is a bound for the degrees of 
the minimal generators of $A$ and $\beta_2(A)$ provides a 
bound for the degree of the relations of $A$.

In general, the connected graded algebra $R:= T^H$ need not 
be AS regular. However, when $R$ is commutative, the Noether 
Normalization Theorem states that there exists a nonnegative 
integer $d$ and algebraically independent homogeneous elements 
$y_1, y_2, \cdots, y_d$ in $R$ such that $R$ is a finitely 
generated module over the polynomial subring $S = 
\Bbbk[y_1, y_2, \cdots, y_d]$. When $R$ is noncommutative, 
such a result fails, in general, even when allowing $S$ to be 
a noncommutative AS regular algebra.

Nevertheless, if we suppose the existence of an AS regular 
subalgebra $S\subseteq R$, then we are able to prove bounds 
for $\beta(R)$ (as well as $\beta_i(R)$ for $i\geq 2$) by 
understanding the connection between $S$ and $R$. Therefore, 
some results in this section assume the existence of an AS 
regular version of a Noether normalization (e.g., as in 
Lemma \ref{xxlem3.8}) or a map from some AS regular algebra 
$S \rightarrow R$ such that $R$ is a finitely generated $S$-module.
In particular, we generalize some results of Symonds in \cite{Sy} 
and of Derksen in \cite{De}.

Throughout this section, we fix the following notation.

\begin{notation}
\label{xxnot4.1}
Let $A$ and $B$ be connected graded algebras, and let $f: A\to B$ 
be a graded algebra homomorphism making $B$ a finitely generated
graded left $A$-module generated by a set of homogeneous elements, 
say $\{v_i\}_{i \in S_1}$ including $1$, with degree no more than 
$t^A_0(_A B):=\deg \Tor^A_0(\Bbbk, B)$.  Let $A$ be generated as 
an algebra by a set of homogeneous elements, say 
$\{x_j\}_{j \in S_2}$, of degrees no more than $\beta(A)$.
\end{notation}

The next lemma is \cite[Proposition 2.1(1)]{Sy}.

\begin{lemma}
\label{xxlem4.2}  Assume Notation \ref{xxnot4.1}. Then 
$$\beta(B) \leq \max\{ \beta(A), t^A_0(_A B)\}.$$
\end{lemma}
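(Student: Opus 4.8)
The plan is to reduce the claim to exhibiting an explicit algebra generating set of $B$ whose generators all have degree at most $\max\{\beta(A),\, t^A_0(_A B)\}$. By \eqref{E1.0.5} and \eqref{E3.0.2} one has $\beta(B)=\deg \Tor^B_1(\Bbbk,\Bbbk)=\deg(\fm/\fm^2)$, where $\fm=B_{\geq 1}$; since the degrees occurring in a minimal generating set are bounded above by those of any algebra generating set, producing a single generating set of bounded degree suffices to bound $\beta(B)$.

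First I would take the set $\Sigma:=\{f(x_j)\}_{j\in S_2}\cup\{v_i\}_{i\in S_1}$ and argue that it generates $B$ as an algebra. The left $A$-module structure on $B$ is given through $f$, namely $a\cdot b=f(a)b$, so the hypothesis that $B$ is generated as a left $A$-module by the $v_i$ reads $B=\sum_{i} f(A)\,v_i$. Since $A$ is generated as an algebra by the $x_j$, its image $f(A)$ is generated as an algebra by the $f(x_j)$. Hence every $b\in B$ can be written as $b=\sum_i c_i v_i$ with each $c_i$ a noncommutative polynomial in the $f(x_j)$, so $b$ lies in the subalgebra of $B$ generated by $\Sigma$. (The element $1$ is among the $v_i$, so constants are accounted for.)

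Then I would read off the degrees: $\deg f(x_j)\leq \deg x_j\leq \beta(A)$ for every $j$ --- here $f$ may lower the degree or annihilate a generator, which only improves the bound --- while $\deg v_i\leq t^A_0(_A B)$ for every $i$ by Notation~\ref{xxnot4.1}. Thus every element of $\Sigma$ has degree at most $\max\{\beta(A),\, t^A_0(_A B)\}$, and combined with the reduction above this yields $\beta(B)\leq \max\{\beta(A),\, t^A_0(_A B)\}$.

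The argument is entirely formal, so I do not expect a genuine obstacle; the only point requiring care is the standard passage from an arbitrary algebra generating set to the degree of a minimal one, i.e. the observation that $\fm$ is generated as a one-sided ideal by $\Sigma\cap B_{\geq 1}$ and therefore $\fm/\fm^2$ is concentrated in degrees $\leq \max\{\beta(A),\, t^A_0(_A B)\}$. This is precisely the content of the identification \eqref{E3.0.2}.
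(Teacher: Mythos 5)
Your proposal is correct and follows essentially the same route as the paper: the paper's proof also writes $B=\sum_{i\in S_1} A v_i$ and observes that $B$ is therefore generated as an algebra by $\{v_i\}_{i\in S_1}\cup\{x_j\}_{j\in S_2}$, whose degrees are bounded by $t^A_0({_A B})$ and $\beta(A)$ respectively. Your write-up merely makes explicit the standard passage from an arbitrary generating set to the minimal one via $\Tor^B_1(\Bbbk,\Bbbk)\cong\fm/\fm^2$, which the paper leaves implicit.
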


\begin{proof} Write $B=\sum_{i\in S_1} A v_i$. Then, as an 
algebra, $B$ is generated by $\{v_i\}_{i\in S_1} 
\cup \{x_j\}_{j\in S_2}$. Hence the assertion follows.
\end{proof}

Next we generalize \cite[Proposition 2.1(2,3)]{Sy} which 
concerns bounds on the degrees of the relations in $A$, that is, $\beta_2(A)$.
In the noncommutative case, we can obtain only a weaker bound.

Let $A$ be a connected graded algebra and write $A$ as
$$A=\Bbbk \langle \mathcal{G}(A)\rangle/I(A)$$ 
where $\mathcal{G}(A)$ is a minimal generating set of $A$ and 
$I(A)$ is the two sided ideal of the relations in $A$. Here 
$\Bbbk \langle \mathcal{G}(A)\rangle$ is the free algebra 
generated by the graded vector space $\mathcal{G}(A)$. Let 
$N \geq \beta(A)$ be a positive integer. Define 
\[ \Phi_{N}(A) = \Bbbk \langle \mathcal{G}(A)\rangle /(I(A)_{\leq N})
\]
where $(I(A)_{\leq N})$ is the ideal of the free algebra 
$\Bbbk \langle \mathcal{G}(A)\rangle$ generated by $I(A)_{\leq N}$. By 
definition, there is a canonical surjective algebra map 
$\pi_A: \Phi_N(A)\to A$. It is clear that the degree of the 
minimal relation set of $A$ is no more than $N$ if and only 
if $\Phi_N(A)=A$. Namely,
\begin{equation}
\label{E4.2.1}\tag{E4.2.1}
N\geq \beta_2(A) \quad \Longleftrightarrow \quad \Phi_N(A)=A.
\end{equation}

\begin{lemma}
\label{xxlem4.3} 
Let $f: A \rightarrow B$ be as in Notation  \ref{xxnot4.1}.
If $N\geq \max\{\beta(A),\beta_2(A)\}$, then there is a 
unique lifting of the map $f$ to a map $f': A\to \Phi_N(B)$.
\end{lemma}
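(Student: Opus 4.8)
The plan is to construct $f'$ by first lifting the images of a generating set of $A$ along $\pi_B$ and then checking that the relations of $A$ are respected. The crucial preliminary observation is that the canonical surjection $\pi_B\colon\Phi_N(B)\to B$ is an isomorphism in all degrees $\leq N$: since the two-sided ideal $(I(B)_{\leq N})$ agrees with $I(B)$ in every degree $\leq N$ (it is contained in $I(B)$, and it contains every relation of $B$ of degree $\leq N$), the graded pieces $\Phi_N(B)_d$ and $B_d$ coincide for all $d\leq N$. In particular $\pi_B$ is injective in degrees $\leq N$, which is what drives the entire argument (the statement presupposes that $\Phi_N(B)$ is defined, i.e.\ $N\geq\beta(B)$).

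First I would write $A=\Bbbk\langle\mathcal{G}(A)\rangle/I(A)$ and choose, for each generator $x_j\in\mathcal{G}(A)$ (which has degree $\leq\beta(A)\leq N$), the unique element $\tilde f(x_j)\in\Phi_N(B)$ with $\pi_B(\tilde f(x_j))=f(x_j)$; this exists and is unique because $f(x_j)\in B_{\leq N}$ and $\pi_B$ is bijective, and degree-preserving, in those degrees. By freeness, this assignment extends to a graded algebra homomorphism $\tilde f\colon\Bbbk\langle\mathcal{G}(A)\rangle\to\Phi_N(B)$, and by construction $\pi_B\circ\tilde f$ equals the free-algebra map $\hat f$ induced by $x_j\mapsto f(x_j)$, since the two agree on generators.

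Next I would verify that $\tilde f$ annihilates $I(A)_{\leq N}$. For a relation $r\in I(A)$ of degree $\leq N$ we have $\pi_B(\tilde f(r))=\hat f(r)=0$ in $B$, because $\hat f$ factors through $A=\Bbbk\langle\mathcal{G}(A)\rangle/I(A)$ and recovers $f$. As $\tilde f(r)$ lies in $\Phi_N(B)_{\leq N}$ and $\pi_B$ is injective there, I conclude $\tilde f(r)=0$. Hence $\tilde f$ kills the generating set $I(A)_{\leq N}$ of the ideal $(I(A)_{\leq N})$, and therefore factors through $\Phi_N(A)=\Bbbk\langle\mathcal{G}(A)\rangle/(I(A)_{\leq N})$. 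Since $N\geq\beta_2(A)$, the equivalence \eqref{E4.2.1} gives $\Phi_N(A)=A$, so $\tilde f$ descends to the desired graded algebra map $f'\colon A\to\Phi_N(B)$ satisfying $\pi_B\circ f'=f$.

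Finally, uniqueness is immediate: any lift $f'$ satisfies $\pi_B\circ f'=f$, so on each generator $x_j$ (of degree $\leq N$) the element $f'(x_j)$ is forced to be the unique $\pi_B$-preimage of $f(x_j)$; since $A$ is generated by the $x_j$ and $f'$ is an algebra homomorphism, $f'$ is determined everywhere. I do not anticipate a serious obstacle here; the one point that requires care is the degree bookkeeping, namely ensuring that both the generators (degree $\leq\beta(A)$) and a generating set of relations (degree $\leq\beta_2(A)$) of $A$ lie in the range $\leq N$ where $\pi_B$ is invertible. This is exactly where the two halves of the hypothesis $N\geq\max\{\beta(A),\beta_2(A)\}$ enter.
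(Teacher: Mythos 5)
Your proposal is correct and takes essentially the same route as the paper: identify $B_{\leq N}$ with $\Phi_N(B)_{\leq N}$, lift the generators of $A$ through this identification, extend to the free algebra $\Bbbk\langle \mathcal{G}(A)\rangle$, use injectivity of $\pi_B$ in degrees $\leq N$ to kill $I(A)_{\leq N}$, and invoke $N\geq \beta_2(A)$ to kill all of $I(A)$, with uniqueness forced on generators. Your final packaging via $\Phi_N(A)=A$ and \eqref{E4.2.1} is just a rephrasing of the paper's statement that $I(A)$ is generated by $I(A)_{\leq N}$, so the two arguments coincide.
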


\begin{proof}  
By definition, we have $B_{\leq N}=\Phi_{N}(B)_{\leq N}$,
so we will identify these two graded spaces. If $f'$ 
exists, then since $N \geq \beta(A)$, we have 
$f'\mid_{\mathcal{G}(A)}=f\mid_{\mathcal{G}(A)}$.
Therefore if $f'$ exists, it is unique. 

Next we prove the existence of $f'$. Let $\Pi$ be the canonical 
map $\Bbbk\langle \mathcal{G}(A)\rangle \to A 
(=\Bbbk\langle \mathcal{G}(A)\rangle /(I(A)))$. 
Since $\Bbbk\langle \mathcal{G}(A)\rangle$ is a free algebra, 
we can lift the map $f: A\to B$ to 
$g: \Bbbk\langle \mathcal{G}(A)\rangle \to B$ by defining 
$$g (x):=f(x)=(f\circ \Pi)(x), \quad \text{for all }x 
\in \mathcal{G}(A).$$ 
It follows that $g=f\circ \Pi$. Since $\Bbbk\langle \mathcal{G}(A)\rangle$ 
is a free algebra, there is an algebra map $f'': 
\Bbbk\langle \mathcal{G}(A)\rangle \to \Phi_N(B)$ defined by setting 
$f''(x):=f(x) $ for all $x\in \mathcal{G}(A)$. Since $B_{\leq N}=
\Phi_{N}(B)_{\leq N}$, 
when restricted to $(\Bbbk\langle \mathcal{G}(A)\rangle)_{\leq N}$, 
$f''=\pi_{B} \circ f''=g$. Therefore $g=\pi_B \circ f''$.
It remains to show that $f''(I(A))=0$. By construction, 
$g(I(A))=0$. Then 
$$f''((I(A))_{\leq N})=\pi_B\circ f''((I(A))_{\leq N})=
g((I(A))_{\leq  N})=0$$
as $\pi_B$ is the identity when restricted to elements 
of degree no more than $N$. Since $I(A)$ is generated by 
$(I(A))_{\leq N}$, we obtain that $f''(I(A))=0$, as desired.
We now let $f'$ be the map $A \to \Phi_N(B)$ induced by $f''$.
\end{proof}

\begin{lemma}
\label{xxlem4.4} 
Let $f: A \rightarrow B$ be as in Notation  \ref{xxnot4.1}. If 
$$N\geq \max\{ 2 t^A_0(_A B), t^A_0(_A B)+\beta(A), 
\beta_2(A)\}$$
then $\Phi_N(B)$ is generated by $\{v_i\}_{i\in S_1}$ 
as a left $A$-module.
\end{lemma}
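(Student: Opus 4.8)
The plan is to use the map $f'\colon A\to \Phi:=\Phi_N(B)$ furnished by Lemma~\ref{xxlem4.3} --- this is exactly where the hypothesis $N\geq \beta_2(A)$ (together with $N\geq\beta(A)$) is needed --- to regard $\Phi$ as a graded left $A$-module, and then to prove that the left $A$-submodule $M:=\sum_{i\in S_1} Av_i\subseteq \Phi$ is all of $\Phi$. Everything rests on one comparison: the canonical surjection $\pi_B\colon \Phi\to B$ satisfies $\pi_B\circ f'=f$, hence is a homomorphism of graded left $A$-modules, and by construction it is an isomorphism in degrees $\leq N$ (the ideals $(I(B)_{\leq N})$ and $I(B)$ agree in those degrees). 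Since $\pi_B(M)=\sum_i Av_i=B$ and $\pi_B$ is injective in degrees $\leq N$, I would first record that $M_{\leq N}=\Phi_{\leq N}$.

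The key reduction is that it suffices to prove $M\cdot g\subseteq M$ for every $g$ in a minimal algebra generating set $\mathcal{G}(B)$. Indeed, $1\in\{v_i\}$ gives $1\in M$, and $\Phi$ is spanned by monomials in $\mathcal{G}(B)$; so closure of $M$ under right multiplication by these generators, together with $1\in M$, forces $M=\Phi$ by an immediate induction on the length of a monomial (if $w\in M$ and $g\in\mathcal{G}(B)$, then $wg\in Mg\subseteq M$). To verify the closure I take $g\in\mathcal{G}(B)$ and any $v_i$ and estimate the degree of $v_ig$: by Lemma~\ref{xxlem4.2}, $\deg g\leq\beta(B)\leq\max\{\beta(A),\,t^A_0(_AB)\}$, so
$$\deg(v_ig)\leq t^A_0(_AB)+\max\{\beta(A),\,t^A_0(_AB)\}=\max\{t^A_0(_AB)+\beta(A),\,2\,t^A_0(_AB)\}\leq N.$$
Hence $v_ig\in\Phi_{\leq N}=M_{\leq N}$, so $v_ig=\sum_l f'(b_{il})v_l$ for suitable $b_{il}\in A$; left-multiplying by any $f'(a)$ keeps this in $M$, so $\bigl(\sum_i f'(a_i)v_i\bigr)g\in M$, i.e.\ $Mg\subseteq M$, as required.

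The conceptual point, and the only real subtlety, is that $M$ is closed under the left $A$-action \emph{by definition} but a priori not under the algebra multiplication of $\Phi$; the content of the lemma is that the relations of $B$ of degree $\leq N$ already suffice to ``straighten'' each product $v_ig$ back into an $A$-linear combination of the $v_l$. That this straightening is available is precisely the numerical input: the bounds $2\,t^A_0(_AB)\leq N$ and $t^A_0(_AB)+\beta(A)\leq N$ guarantee that every such $v_ig$ lands in the range where $\Phi$ has been forced to coincide with $B$, while $\beta_2(A)\leq N$ is what makes the $A$-module structure on $\Phi$ exist at all. I expect the degree bookkeeping in the displayed estimate --- and the verification that $\pi_B$ is an isomorphism of $A$-modules in low degrees, so that $M_{\leq N}=\Phi_{\leq N}$ --- to be the steps requiring the most care, though neither is difficult once this framing is in place.
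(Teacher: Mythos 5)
Your proof is correct and follows essentially the same route as the paper's: both arguments rest on the identification $\Phi_N(B)_{\leq N}=B_{\leq N}$, the observation that products of the module generators $v_i$ with algebra generators have degree at most $N$ (exactly where the hypotheses $2t^A_0({}_AB)\leq N$ and $t^A_0({}_AB)+\beta(A)\leq N$ enter), and closure of $\sum_{i\in S_1}Av_i$ under right multiplication by an algebra generating set. The only difference is organizational: the paper records the straightening relations \eqref{E4.4.1}--\eqref{E4.4.2} directly in $B$ and transfers them to $\Phi_N(B)$, whereas you derive the same relations from your intermediate statement $M_{\leq N}=\Phi_{\leq N}$.
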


\begin{proof} 
We identify $a\in A$ with $f(a)\in B$ 
and $f'(a)\in \Phi_N(B)$ when there is no confusion.
Next we express some products  of elements in $B$.
For each $i\in S_1$ and $j\in S_2$ we write
\begin{equation}
\label{E4.4.1}\tag{E4.4.1}
v_i x_j=\sum_{k} y_{ijk} v_k,
\end{equation}
for some $y_{ijk}\in A$; and for $i, j\in S_1$,
\begin{equation}
\label{E4.4.2}\tag{E4.4.2}
v_i v_j=\sum_{k} z_{ijk} v_k,
\end{equation}
for some $z_{ijk}\in A$. The above two equations are 
in degrees no more than 
$$\max\{ 2 t^A_0(_A B), t^A_0(_A B)+\beta(A)\} \leq N.$$ 
Since we can identify $B_{\leq N}$ with 
$\Phi_{N}(B)_{\leq N}$, equations \eqref{E4.4.1}--\eqref{E4.4.2} 
hold in $\Phi_{N}(B)$. 

We claim that $\Phi_{N}(B)=\sum_{i\in S_1} A v_i$.
By the proof of Lemma \ref{xxlem4.2}, $B$, whence 
$\Phi_{N}(B)$, is generated by a subset
$W \subset \{v_i\}_{i\in S_1}\cup \{x_j\}_{j\in S_2}$, 
and it is enough to show that 
$$\left(\sum_{i\in S_1} A v_i\right) W\subseteq 
\left(\sum_{i\in S_1} A v_i\right).$$
But this statement follows from equations 
\eqref{E4.4.1}--\eqref{E4.4.2} viewed in $\Phi_N(B)$.
\end{proof}

Next is a version of \cite[Proposition 2.1(2,3)]{Sy}.

\begin{proposition}
\label{xxpro4.5} 
Let $f: A \rightarrow B$ be as in Notation \ref{xxnot4.1}.
Then 
$$\beta_2(B)\leq \max\left\{2 t^A_0(_A B),
t^A_0(_A B)+\beta(A),\beta_2(A), 
t^A_1(_A B)\right\}.$$
\end{proposition}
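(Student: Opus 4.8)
The plan is to prove the stronger statement that $\Phi_N(B) = B$ for
\[ N := \max\left\{2\,t^A_0({}_A B),\ t^A_0({}_A B) + \beta(A),\ \beta_2(A),\ t^A_1({}_A B)\right\}; \]
by the equivalence \eqref{E4.2.1} applied to $B$ this reads exactly $\beta_2(B)\le N$, which is the assertion. First I would take $\{v_i\}_{i\in S_1}$ in Notation~\ref{xxnot4.1} to be a \emph{minimal} generating set of $B$ as a left $A$-module; this is legitimate, since minimal generators have degree $\le t^A_0({}_A B)$, and $1$ automatically belongs to any minimal generating set because $B_0 = \Bbbk$ while $A_{\ge 1}B \subseteq B_{\ge 1}$. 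Writing $F = \bigoplus_{i\in S_1} A(-\deg v_i)$ and $\psi\colon F \to B$ for the surjection $e_i \mapsto v_i$, minimality ensures that the syzygy module $\ker\psi$ is generated in degrees $\le t^A_1({}_A B)$, since $t^A_1({}_A B) = \deg \Tor^A_1(\Bbbk,B)$ records precisely the top degree of the minimal generators of the first syzygy.

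Next, because $N$ dominates the first three quantities in its definition, Lemma~\ref{xxlem4.3} supplies the lift $f'\colon A \to \Phi_N(B)$ with $\pi_B\circ f' = f$, and Lemma~\ref{xxlem4.4} shows that $\Phi_N(B)$ is generated by the images of $\{v_i\}$ as a left $A$-module via $f'$ (here $N\ge\beta(B)$ by Lemma~\ref{xxlem4.2}, so $\Phi_N(B)$ is defined). I thus obtain a surjective left $A$-module map $\tilde\psi\colon F \to \Phi_N(B)$, $e_i \mapsto v_i$, satisfying $\pi_B\circ\tilde\psi = \psi$; note that $\pi_B$ is itself $A$-linear (intertwining the $f'$- and $f$-actions) precisely because $\pi_B\circ f' = f$. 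Since $\ker\tilde\psi \subseteq \ker\psi$ is automatic, it remains only to prove the reverse inclusion, for then $\pi_B\colon \Phi_N(B) = F/\ker\tilde\psi \to F/\ker\psi = B$ is an isomorphism.

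The crux is a degreewise comparison in low degrees. As in the proof of Lemma~\ref{xxlem4.3}, one checks that $(I(B)_{\le N})_d = I(B)_d$ for all $d \le N$, so $\pi_B$ restricts to an isomorphism $\Phi_N(B)_{\le N} \xrightarrow{\ \sim\ } B_{\le N}$. Given a homogeneous generator $r \in F_D$ of $\ker\psi$ with $D \le t^A_1({}_A B) \le N$, the element $\tilde\psi(r) \in \Phi_N(B)_D$ satisfies $\pi_B(\tilde\psi(r)) = \psi(r) = 0$, whence $\tilde\psi(r)=0$ by injectivity of $\pi_B$ in degrees $\le N$. Thus every generator of $\ker\psi$ lies in the $A$-submodule $\ker\tilde\psi$, giving $\ker\psi\subseteq\ker\tilde\psi$ and completing the argument. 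I expect the main difficulty to be purely bookkeeping: keeping straight the two $A$-module structures on $F$ (one mapping to $B$ through $f$, the other to $\Phi_N(B)$ through $f'$) and verifying carefully that $\pi_B$ is $A$-linear and an isomorphism on $(-)_{\le N}$, so that the $t^A_1({}_A B)$-bound on the syzygies can be played against low-degree injectivity; once $\{v_i\}$ is chosen minimal, the rest is formal.
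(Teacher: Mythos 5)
Your proposal is correct and follows essentially the same route as the paper: both reduce via \eqref{E4.2.1} to showing $\Phi_N(B)=B$, invoke Lemmas~\ref{xxlem4.3} and~\ref{xxlem4.4}, and play the bound $t^A_1({}_AB)\le N$ on the first syzygies of ${}_AB$ against the identification $\Phi_N(B)_{\le N}=B_{\le N}$. The only difference is cosmetic: where the paper lifts a minimal projective resolution $P_1\to P_0\to B$ to deduce that $0\to K\to\Phi_N(B)\to B\to 0$ splits and then kills $K$ by low-degree generation, you compare the kernels of the two free covers $F\to B$ and $F\to\Phi_N(B)$ directly --- an equivalent (and slightly more streamlined) endgame.
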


\begin{proof} Let 
$$N=\max\left\{2 t^A_0(_A B),
t^A_0(_A B)+\beta(A),\beta_2(A), 
t^A_1(_A B)\right\}.$$
By equation \eqref{E4.2.1}, it is enough to show that 
$\Phi_N(B)=B$. By Lemma \ref{xxlem4.3}, $f$ lifts to 
an algebra map $f'$ from $A$ to $\Phi_{N}(B)$. Now we 
have the following commutative diagram of left $A$-modules 
with exact rows, where the vertical arrows can be filled 
in since the $P_i$ are projective:
$$\begin{CD}
\cdots @>>> P_1 @>>> P_0 @>>> B @>>> 0\\
@. @VVV @VVV @VVV \\
0@>>> K @>>> \Phi_N(B) @>>>  B @>>> 0.
\end{CD}
$$
The map from $B$ to $B$ is the identity. By the 
definition of $\Phi_N(B)$, $K_{\leq N}=0$. When we 
take a minimal resolution of the top row of the above 
diagram, $P_1$ is generated in degree at most $t^A_1(_A B)$, 
which is $\leq N$. So the map from $P_1$ to $K$ is zero, 
thus the composition map $P_1\to \Phi_N(B)$ is zero.
It follows that the bottom row is split as a sequence
of left $A$-modules. By Lemma \ref{xxlem4.4}, 
$\Phi_N(B)$ is generated as a left $A$-module in degree 
at most $N$. Hence $K$ is generated as a left $A$-module 
in degree at most $N$. This implies that $K=0$. 
\end{proof}

Since the CM regularity of an AS regular algebra is easy to compute
(see Example \ref{xxex2.10}(3)), the following corollary is 
useful in bounding $\beta$ or $\beta_2$. 

\begin{corollary}
\label{xxcor4.6} 
Assume Hypothesis \ref{xxhyp0.3}. Suppose there is a graded 
algebra map $S\to T^H$ where $S$ is a noetherian AS regular 
algebra such that $T^H$ is finitely generated over $S$ on both
sides. Let $\delta(T/S)=\CMreg(T)-\CMreg(S)$. Then
\begin{enumerate}
\item[(1)] $\beta(T^H)\leq \max\{\beta(S), \delta(T/S)\}$, and
\item[(2)] $\beta_2(T^H)\leq \max\left\{2 \delta(T/S),
\delta(T/S)+\beta(S),\beta_2(S)\right\}$.
\end{enumerate}
\end{corollary}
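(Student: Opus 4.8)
The plan is to run the machinery of Section~\ref{xxsec4} with $A = S$ and $B = R := T^H$, so that the whole problem reduces to bounding the two numbers $t^S_0(_SR)$ and $t^S_1(_SR)$ that appear in Lemma~\ref{xxlem4.2} and Proposition~\ref{xxpro4.5}. First I would check that Notation~\ref{xxnot4.1} is met: by hypothesis $R$ is a finitely generated graded left $S$-module, so a homogeneous generating set of degrees $\le t^S_0(_SR)$ including $1$ exists, and $S$ is generated in degrees $\le \beta(S)$. Then Lemma~\ref{xxlem4.2} yields $\beta(R)\le \max\{\beta(S),\,t^S_0(_SR)\}$ and Proposition~\ref{xxpro4.5} yields $\beta_2(R)\le \max\{2t^S_0(_SR),\,t^S_0(_SR)+\beta(S),\,\beta_2(S),\,t^S_1(_SR)\}$.

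The central estimate I would establish is
\[
t^S_i(_SR)\ \le\ \delta(T/S)+i \qquad \text{for all } i\ge 0,
\]
which mirrors Lemma~\ref{xxlem2.16}(1) but for the module $R$ in place of $T$. Since $S$ is AS regular it has finite global dimension and a balanced dualizing complex, so the finitely generated left $S$-module $R$ has finite projective dimension; hence Theorem~\ref{xxthm2.13}(3) applies and gives $\Torreg(_SR)=\CMreg(_SR)-\CMreg(S)$, and Example~\ref{xxex2.12}(1) then gives $t^S_i(_SR)\le \Torreg(_SR)+i=\CMreg(_SR)-\CMreg(S)+i$. Because $R$ is finitely generated over $S$ on both sides, the regularity of $R$ as an $S$-module coincides with its intrinsic regularity, $\CMreg(_SR)=\CMreg(R)$, by \cite[Theorem 8.3(3)]{AZ}. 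Finally Lemma~\ref{xxlem2.15}(1) gives $\CMreg(R)=\CMreg(T^H)\le \CMreg(T)$, and substituting yields the displayed inequality; in particular $t^S_0(_SR)\le \delta(T/S)$ and $t^S_1(_SR)\le \delta(T/S)+1$.

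With this estimate, part (1) is immediate from Lemma~\ref{xxlem4.2}. For part (2), substituting into Proposition~\ref{xxpro4.5} produces $\max\{2\delta(T/S),\,\delta(T/S)+\beta(S),\,\beta_2(S),\,\delta(T/S)+1\}$, which carries one extra term $\delta(T/S)+1$ beyond the asserted bound. I would eliminate it by observing that $S\neq \Bbbk$: otherwise $R$ would be finite over $\Bbbk$, hence finite-dimensional, forcing $T$ (which is finite over $R$) to be finite-dimensional and contradicting $\injdim T\ge 2$ in Hypothesis~\ref{xxhyp0.3}. Therefore $\beta(S)\ge 1$, so $\delta(T/S)+1\le \delta(T/S)+\beta(S)$ and the extra term is absorbed, giving exactly the claimed bound.

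The main obstacle is the central estimate, and within it the identification $\CMreg(_SR)=\CMreg(R)$: one must make sure that the $S_{\ge 1}$-local cohomology of $R$ defining $\CMreg(_SR)$ is the object to which \cite[Theorem 8.3(3)]{AZ} is applied (passing to the image of $S$ in $R$ if $S\to R$ is not injective, which is harmless since that image is a noetherian subring over which $R$ is still finite), and that the finite-projective-dimension hypothesis of Theorem~\ref{xxthm2.13}(3) holds — which it does precisely because $S$ is AS regular. Everything else, namely the reduction through Notation~\ref{xxnot4.1} and the absorption of the spurious term, is routine.
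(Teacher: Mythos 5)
Your proposal is correct and takes essentially the same approach as the paper: the paper's proof likewise sets $(A,B)=(S,T^H)$, derives the key estimate $t^S_i({}_S T^H)\le \delta(T/S)+i$ from Example~\ref{xxex2.12}(1), Theorem~\ref{xxthm2.13}(3), and Lemma~\ref{xxlem2.15}(1), and then concludes via Lemma~\ref{xxlem4.2}, Proposition~\ref{xxpro4.5}, and the fact that $\beta(S)\ge 1$. Your additional care about the identification $\CMreg({}_S R)=\CMreg(R)$ via \cite[Theorem 8.3(3)]{AZ} and about why $\beta(S)\ge 1$ only makes explicit what the paper leaves implicit.
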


\begin{proof}
Let $A=S$ and $B=T^H$. 
Then by Example \ref{xxex2.12}(1),  Theorem 
\ref{xxthm2.13}(3), and Lemma \ref{xxlem2.15}(1),  
we have, for all $i\geq 0$,
\begin{align}
\label{E4.6.1}\tag{E4.6.1}
t^A_i(_A B)&\leq  \Torreg(_A B)+i =\CMreg(B)-\CMreg(A)+i\\
\notag &\leq \CMreg(T)-\CMreg(S)+i=\delta(T/S)+i.
\end{align}

(1) The assertion follows from Lemma \ref{xxlem4.2} and the inequality
\eqref{E4.6.1}.

(2) The assertion follows from Proposition \ref{xxpro4.5},
the inequality \eqref{E4.6.1}, and the fact that $\beta(S)\geq 1$.
\end{proof}

In the remainder of this section, we prove a noncommutative 
version of \cite[Theorem 1]{De}, which provides a bound 
on the degrees of higher syzygies of $T^H$. Recall from 
Definition \ref{xxdef2.1} that the {\it $a$-invariant} of a 
graded module $M$, denoted by $a(M)$, is defined to be the 
$t$-degree of the Hilbert series $h_M(t)$, viewed as a rational 
function. 
Note that if $A  = \Bbbk[x_1, \dots, x_r]$ is
the commutative polynomial ring such that $\deg x_i = d_i$ and
$d_i \geq d_{i+1}$, then for every $1 \leq k \leq r$, we have
\begin{equation}
\label{E4.6.2}\tag{E4.6.2}
t^A_{k}(\Bbbk) = d_1 + \cdots + d_k.
\end{equation}
Hence, the next theorem generalizes \cite[Theorem 6]{De}.

\begin{theorem}
\label{xxthm4.7}
Let $A$ be a noetherian connected graded algebra with 
balanced dualizing complex, and let $M$ be a finitely 
generated graded left $A$-module that is $s$-Cohen--Macaulay.
\begin{enumerate}
\item[(1)]
For each $i$, 
$$t^A_i(_AM)
\leq \CMreg(M) -s+t^A_{i+s}(\Bbbk).$$
\item[(2)]
Assume Hypothesis \ref{xxhyp2.7} for $A$. Then, for each $i$, 
$$t^A_i(_AM)
\leq a(M)+t^A_{i+s}(\Bbbk).$$
\end{enumerate}
\end{theorem}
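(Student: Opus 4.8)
The plan is to reduce part (2) to part (1), and to prove part (1) by pushing a free resolution of $\Bbbk$ through the single nonvanishing local cohomology module $H^s_{\fm}(M)$. Since $M$ is $s$-Cohen--Macaulay, Example~\ref{xxex2.10}(4) gives $\CMreg(M)-s=\deg H^s_{\fm}(M)$, so the inequality in part (1) is exactly
$$t^A_i(_AM)\leq \deg H^s_{\fm}(M)+t^A_{i+s}(\Bbbk).$$
Granting part (1), I would deduce part (2) from Hypothesis~\ref{xxhyp2.7}: the Cohen--Macaulay condition collapses $B_M(t)=\sum_q(-1)^q h_{H^q_{\fm}(M)}(t)$ to $(-1)^s h_{H^s_{\fm}(M)}(t)$, and rationality over $\mathbb Q$ gives the equality of rational functions $h_M(t)=(-1)^s h_{H^s_{\fm}(M)}(t)$. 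Applying Example~\ref{xxex2.2}(3) to the bounded-above module $H^s_{\fm}(M)$ then yields $a(M)=\deg_t h_{H^s_{\fm}(M)}(t)=\deg H^s_{\fm}(M)=\CMreg(M)-s$, and substituting this into part (1) produces part (2).

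The core of part (1) is the isomorphism of graded vector spaces $\Tor^A_i(\Bbbk,M)\cong\Tor^A_{i+s}(\Bbbk,H^s_{\fm}(M))$, valid for every $i$. I would obtain it in the derived category, using the standard identification $R\Gamma_{\fm}(-)\cong R\Gamma_{\fm}(A)\otimes^{\mathbf L}_A(-)$ from the Artin--Zhang--Van den Bergh theory \cite{AZ,VdB}. Let $C$ be the cone of the canonical map $R\Gamma_{\fm}(A)\to A$. The key point is that $\Bbbk\otimes^{\mathbf L}_A C=0$, equivalently $\Bbbk\otimes^{\mathbf L}_A R\Gamma_{\fm}(A)\cong\Bbbk$; this I would derive from the symmetry $R\Gamma_{\fm}(A)\cong R\Gamma_{\fm^{\op}}(A)$ of \cite[Corollary 4.8]{VdB} together with $R\Gamma_{\fm^{\op}}(\Bbbk)=\Bbbk$ (as $\Bbbk$ is already $\fm^{\op}$-torsion). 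Tensoring the triangle $R\Gamma_{\fm}(M)\to M\to C\otimes^{\mathbf L}_A M$ on the left by $\Bbbk\otimes^{\mathbf L}_A(-)$ then shows that $\Bbbk\otimes^{\mathbf L}_A R\Gamma_{\fm}(M)\to\Bbbk\otimes^{\mathbf L}_A M$ is an isomorphism. Finally, because $M$ is $s$-Cohen--Macaulay its local cohomology is concentrated in degree $s$, so $R\Gamma_{\fm}(M)\cong H^s_{\fm}(M)[-s]$; reading off the homology on both sides of the isomorphism, the shift by $s$ produces precisely $\Tor^A_i(\Bbbk,M)\cong\Tor^A_{i+s}(\Bbbk,H^s_{\fm}(M))$.

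The degree estimate is then elementary. I would fix a minimal graded free resolution $F_\bullet\to\Bbbk$ of $\Bbbk$ as a right $A$-module, so that $F_j$ is a finite direct sum of shifts $A(-a)$ with $a\leq t^A_j(\Bbbk)$. Computing $\Tor^A_{i+s}(\Bbbk,H^s_{\fm}(M))$ as the homology of $F_\bullet\otimes_A H^s_{\fm}(M)$, the module $F_{i+s}\otimes_A H^s_{\fm}(M)$ is a finite sum of shifts $H^s_{\fm}(M)(-a)$ with $a\leq t^A_{i+s}(\Bbbk)$. Since $H^s_{\fm}(M)$ is bounded above with top degree $\deg H^s_{\fm}(M)$, this module has degree at most $\deg H^s_{\fm}(M)+t^A_{i+s}(\Bbbk)$, and the same bound holds for its subquotient $\Tor^A_{i+s}(\Bbbk,H^s_{\fm}(M))$. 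Combining with the isomorphism of the previous paragraph gives $t^A_i(_AM)=\deg\Tor^A_{i+s}(\Bbbk,H^s_{\fm}(M))\leq\deg H^s_{\fm}(M)+t^A_{i+s}(\Bbbk)$, which is part (1).

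I expect the main obstacle to lie in the derived-category step of the second paragraph: carefully justifying $R\Gamma_{\fm}(-)\cong R\Gamma_{\fm}(A)\otimes^{\mathbf L}_A(-)$ and $\Bbbk\otimes^{\mathbf L}_A R\Gamma_{\fm}(A)\cong\Bbbk$ in the noncommutative graded setting, and tracking homological degrees through the shift $H^s_{\fm}(M)[-s]$. A secondary point requiring care is that $H^s_{\fm}(M)$, while bounded above, is not finitely generated, so all estimates must be made with top degrees rather than Hilbert polynomials; this causes no trouble in the last paragraph, where only boundedness above of $H^s_{\fm}(M)$ is used.
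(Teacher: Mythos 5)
Your proposal is correct, and it is the Tor-side mirror of the paper's Ext-side proof; the skeleton (replace $M$ by $H^s_{\fm}(M)[-s]$ against the trivial module, then read degree bounds off the minimal free resolution of $\Bbbk$) is the same, but the execution differs in a way worth noting. The paper's pivot is J{\o}rgensen's isomorphism $\RHom_A(M,\Bbbk)\cong \RHom_A(R\Gamma_{\fm}(M),\Bbbk)$, cited as a black box from \cite[Proposition 1.1]{Jo3}; combined with $R\Gamma_{\fm}(M)\cong H^s_{\fm}(M)[-s]$ this gives $\Ext^i_A(M,\Bbbk)\cong \Ext^{i+s}_A(H^s_{\fm}(M),\Bbbk)$, the degree estimate is then made by mapping $H^s_{\fm}(M)$ into the minimal injective resolution of ${}_A\Bbbk$ (the graded dual of the minimal free resolution of $\Bbbk_A$), yielding a lower bound on $\ged$ of the Ext groups, and the result is converted into the stated Tor bound via $(\Ext^i_A(M,\Bbbk))'\cong \Tor^A_i(\Bbbk,M)$ \cite[Remark 4.5]{DW}. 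You instead establish the graded-dual statement $\Bbbk\otimes^{\mathbf L}_A R\Gamma_{\fm}(M)\cong \Bbbk\otimes^{\mathbf L}_A M$ directly, deriving it from Van den Bergh's tensor formula $R\Gamma_{\fm}(-)\cong R\Gamma_{\fm}(A)\otimes^{\mathbf L}_A(-)$ together with the bimodule symmetry $R\Gamma_{\fm}(A)\cong R\Gamma_{\fm^{\op}}(A)$ and $R\Gamma_{\fm^{\op}}(\Bbbk)=\Bbbk$ (all legitimately available here, since the existence of a balanced dualizing complex forces the $\chi$ conditions and finite local cohomological dimension on both sides), and then estimate degrees by tensoring the minimal free resolution of $\Bbbk_A$ with the bounded-above module $H^s_{\fm}(M)$. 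What your route buys: you never leave the Tor side, so no dualization or $\ged$ bookkeeping is needed, and the key isomorphism is proved rather than cited; what the paper's route buys: it is shorter, since the derived-category content is outsourced to \cite{Jo3}, whose Proposition 1.1 is precisely the $\Bbbk$-linear dual of your key claim, so the two pivots are equivalent. Your reduction of part (2) to part (1) --- using Hypothesis~\ref{xxhyp2.7} and Cohen--Macaulayness to get $a(M)=\deg H^s_{\fm}(M)=\CMreg(M)-s$ --- coincides with the paper's argument.
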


\begin{proof} The proof given here is different from the 
proof of \cite[Theorem 6]{De}.

(1) Let
$$ \cdots \to F_{m}\to \cdots \to F_1 \to F_0\to \Bbbk\to 0$$
be a minimal free resolution of the right $A$-module 
$\Bbbk$. Then $F_m=\coprod_j A(-\sigma_{m,j})$ with
$$\sigma_{m,j}\leq t^A_{m}(\Bbbk).$$
Taking graded $\Bbbk$-linear duals, we obtain a 
minimal injective resolution of the left trivial module
$$0\to \Bbbk\to F'_0\to F'_1 \to \cdots \to F'_{m}\to \cdots$$
where $F'_m= \coprod_j A'(\sigma_{m,j})$. 

Let $X$ be a graded left $A$-module that is bounded 
above. Then $\Ext^m_A(X, \Bbbk)$ is a subquotient of 
$$\Hom_{A}(X, F'_m) = \bigoplus_j \Hom_A(X, A'(\sigma_{m,j}))
= \bigoplus_j X'(\sigma_{m,j}).$$
Hence
\begin{equation}
\label{E4.7.1}\tag{E4.7.1}
\ged(\Ext^m_A(X, \Bbbk)) \geq \ged (X') -\max_j\{ \sigma_{mj}\}
=\ged (X') -t^A_{m}(\Bbbk).
\end{equation}

Since $M$ is $s$-Cohen--Macaulay, $H^i_{\fm}(M)=0$ for 
all $i\neq s$. Let $X=H^s_{\fm}(M)$. By 
Example \ref{xxex2.10}(4), $\deg(X)=\CMreg(M)-s$, or
equivalently,
\begin{equation}
\notag\label{E4.7.2}\tag{E4.7.2}
\ged(X')=-\CMreg(M)+s.
\end{equation}

By \cite[Proposition 1.1]{Jo3}, $\RHom_A(M,\Bbbk)) 
\cong \RHom_{A}({\text{R}}\Gamma_{\fm}(M),\Bbbk)$. Hence
$$\begin{aligned}
\Ext^i_A(M,\Bbbk)
&=H^i(\RHom_A(M,\Bbbk)) 
\cong H^i(\RHom_{A}({\text{R}}\Gamma_{\fm}(M),
\Bbbk))\\
&\cong H^i(\RHom_A(X[-s], \Bbbk)\cong H^{i+s}(\RHom_A(X,\Bbbk))\\
&=\Ext^{i+s}_{A}(X,\Bbbk)
\end{aligned}
$$
which implies that
$$\begin{aligned}
\ged (\Ext^i_A(M,\Bbbk)) 
&\;\;\;\;=\;\;\;\; \ged (\Ext^{i+s}(X, \Bbbk))\\
&\overset{\eqref{E4.7.1}}{\geq}
\ged (X') -t^A_{i+s}(\Bbbk) \\
&\overset{\eqref{E4.7.2}}{=}
-\CMreg(M)+s-t^A_{i+s}(\Bbbk).
\end{aligned}
$$
This is equivalent to
$$\deg ((\Ext^i(M,\Bbbk))')\leq \CMreg(M)-s+t^A_{i+s}(\Bbbk).$$
By \cite[Remark 4.5]{DW}, 
$(\Ext^i(M,\Bbbk))'\cong \Tor^A_i(\Bbbk, M)$. Thus
$$t^A_i(_AM):=\deg (\Tor^A_i(\Bbbk, M))\leq 
\CMreg(M)-s+t^A_{i+s}(\Bbbk).$$

(2) By definition and Hypothesis \ref{xxhyp2.7}, we have
$$\begin{aligned}
a(M)&\overset{{\text{by def.}}}{=}
\deg_t h_{M}(t) \overset{{\text{Hyp. \ref{xxhyp2.7}}}}{=}
\deg_t h_{H^s_{\fm}(M)}(t) \\
&\overset{\eqref{E2.2.1}}{=}
\deg H^s_{\fm}(M) 
\overset{\eqref{E2.10.2}}{=}
\CMreg(M)-s. 
\end{aligned}
$$ 
Now the assertion follows from part (1).
\end{proof}

As a corollary, we obtain a noncommutative version of 
\cite[Theorem 1]{De}. 
Let $T = \Bbbk[x_1, \dots, x_n]$ and suppose we have
an action of $H = \Bbbk G$ where $G$ is a finite group.
Choose a minimal set of homogeneous generators $y_1, \dots, y_r$ of the invariant ring $T^H$ where $\deg(y_i) = d_i$ and $d_i \geq d_{i+1}$ for all $i$.
Then there is a graded map from $S$, the polynomial ring in $r$ variables (with degree given by the $d_i$'s) onto $T^H$.
By \eqref{E4.6.2}, and the fact that each $d_i \leq \beta(T^H)$, we have
that $t^S_{i+n}(\Bbbk) \leq (i+n) \beta(T^H)$ for all $i$,
and hence the corollary below recovers Derksen's result.

\begin{corollary}
\label{xxcor4.8} 
Assume Hypothesis~\ref{xxhyp0.3} and let $R=T^H$. 
Suppose that $S$ is a noetherian AS regular algebra and 
there exists a graded algebra homomorphism $S \to R$ such 
that $R$ is finitely generated over $S$ on both sides. Let 
$n$ be the global dimension of $T$. Then 
$$t^S_i(_S R) \leq \CMreg(T) -n +t^S_{i+n}(\Bbbk) \leq
t^S_{i+n}(\Bbbk)-n$$
for all $i\geq 0$.
\end{corollary}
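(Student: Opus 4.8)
The plan is to apply Theorem~\ref{xxthm4.7}(1) with $A=S$ and $M=R$, regarding $R$ as a finitely generated graded left $S$-module via the given homomorphism $S\to R$. Since $S$ is a noetherian AS regular algebra it admits a balanced dualizing complex, so the standing hypotheses of Theorem~\ref{xxthm4.7} hold, and the task reduces to (i) checking that $R$ is $s$-Cohen--Macaulay as a left $S$-module for the right value of $s$, and (ii) bounding the $S$-module CM regularity $\CMreg({}_S R)$.

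First I would pin down $s=n$. By Lemma~\ref{xxlem2.6}(2), $R=T^H$ is $d$-Cohen--Macaulay with $d=\injdim T$, and for the AS regular $T$ under consideration $d=\gldim T=n$. This Cohen--Macaulay property is computed intrinsically, via $H^i_{\fm_R}(R)$, so it must be transferred to the $S$-module structure. The key point is that $R$ is module-finite over the image of $S$, so the $\fm_S$-torsion and $\fm_R$-torsion functors agree on $R$-modules; hence $H^i_{\fm_S}(R)=H^i_{\fm_R}(R)$ for every $i$. Consequently $R$ is $n$-Cohen--Macaulay as a left $S$-module and
$$\CMreg({}_S R)=\CMreg(R),$$
the equality being \cite[Theorem 8.3(3)]{AZ} applied to the module-finite inclusion of the image $f(S)$ into $R$ (one works through $f(S)$ precisely because $S\to R$ need not be injective).

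It then remains to invoke Lemma~\ref{xxlem2.15}(1), which gives $\CMreg(R)=\CMreg(T^H)\le\CMreg(T)\le 0$. Substituting $s=n$ and this regularity bound into Theorem~\ref{xxthm4.7}(1) yields
$$t^S_i({}_S R)\le\CMreg({}_S R)-n+t^S_{i+n}(\Bbbk)\le\CMreg(T)-n+t^S_{i+n}(\Bbbk),$$
which is the first asserted inequality; the second then follows at once from $\CMreg(T)\le 0$. The main obstacle I anticipate is step (i): justifying that the Cohen--Macaulay degree and the CM regularity of $R$ are insensitive to whether $R$ is viewed over itself or over $S$. This rests on the comparison of local cohomology under a module-finite algebra map, and one must take care that $S\to R$ need not be injective, so the comparison is carried out through the image subring $f(S)\subseteq R$.
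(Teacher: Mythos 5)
Your proposal is correct and follows essentially the same route as the paper: apply Theorem~\ref{xxthm4.7}(1) with $A=S$, $M=R$, using Lemma~\ref{xxlem2.6}(2) to get $s=n$ and Lemma~\ref{xxlem2.15}(1) with Example~\ref{xxex2.10}(3) for $\CMreg(R)\leq\CMreg(T)\leq 0$. The only difference is that you spell out the transfer of local cohomology and CM regularity from the $R$-module structure to the $S$-module structure through the image $f(S)$ (via \cite[Theorem 8.3(3)]{AZ}), a step the paper's proof leaves implicit; making it explicit is a genuine improvement in rigor, not a change of method.
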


\begin{proof} 
By Lemma \ref{xxlem2.15}(1) and Example 
\ref{xxex2.10}(3), 
$$\CMreg(R)=\CMreg(T^H)\leq \CMreg(T)\leq 0.$$ 
By Lemma \ref{xxlem2.6}(2), $R$ is $n$-Cohen--Macaulay.
The assertion follows from Theorem \ref{xxthm4.7}(1).
\end{proof}

We remark that Corollary \ref{xxcor4.8} is stronger than 
(though almost equivalent to) Lemma \ref{xxlem2.15}(1). 

\section{Further bounds on the degree of $\Tor_i^A(M,\Bbbk)$}
\label{xxsec5}
In this section we continue to prove bounds on the degrees 
of the higher syzygies of $T^H$, obtaining results that are 
similar to the main results in \cite{CS}. The results in this 
section require the existence of a graded algebra surjection 
from an AS regular algebra $S$ onto $T^H$. We begin with some 
general lemmas.

\begin{lemma}
\label{xxlem5.1}
Suppose that $f: A\to B$ is a graded algebra map 
between two connected graded algebras $A$ and $B$. 
\begin{enumerate}
\item[(1)] \cite[Theorem 10.59]{Ro}
There is a spectral sequence, called the 
change of rings spectral sequence,
$$E^2_{p,q}:=\Tor^B_{p} (\Bbbk_{B}, \Tor^A_q(B, {_A \Bbbk})) 
\Longrightarrow \Tor^A_{p+q} (\Bbbk_{A}, {_A \Bbbk}).$$
The five-term exact sequence associated to the spectral 
sequence is
$$\begin{aligned}
\Tor^A_2(\Bbbk,\Bbbk) &\to \Tor^B_2(\Bbbk, B\otimes_A \Bbbk)
\to \Bbbk\otimes_B \Tor^A_1(B,\Bbbk)\\ 
&\to \Tor^A_1(\Bbbk,\Bbbk)
\to \Tor^B_1(\Bbbk, B\otimes_A \Bbbk)\to 0.
\end{aligned}$$
\item[(2)]
Suppose that $f$ is surjective and that
$A$ and $B$ have the same minimal generating set, that is,
$\{x_j\}$ is a minimal generating set for $A$ and $\{f(x_j)\}$ 
is a minimal generating set for $B$. Then we have an exact 
sequence of graded vector spaces
$$\Tor^A_2(\Bbbk, \Bbbk) \to \Tor^B_2(\Bbbk, \Bbbk)
\to \Bbbk\otimes_B \Tor^A_1(B, \Bbbk)\to 0.$$
\item[(3)]
Retain the hypotheses in part {\rm{(2)}}.
If $\Bbbk\otimes_B \Tor^A_1(B,\Bbbk)\cong \Tor^A_1(B, \Bbbk)$, 
then we have the exact sequence.
\begin{equation}
\label{E5.1.1}\tag{E5.1.1}
\Tor^A_2(\Bbbk, \Bbbk) \to \Tor^B_2(\Bbbk, \Bbbk)
\to \Tor^A_1(B, \Bbbk)\to 0
\end{equation}
\end{enumerate}
\end{lemma}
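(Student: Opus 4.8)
The plan is to obtain all three statements from the change of rings spectral sequence
$$E^2_{p,q}=\Tor^B_{p}(\Bbbk_{B}, \Tor^A_q(B, {}_A\Bbbk)) \Longrightarrow \Tor^A_{p+q}(\Bbbk,\Bbbk),$$
which I take as given from \cite[Theorem 10.59]{Ro}. For part (1) I would simply invoke the low-degree (five-term) exact sequence attached to any first-quadrant homological spectral sequence, namely $H_2 \to E^2_{2,0}\to E^2_{0,1}\to H_1\to E^2_{1,0}\to 0$, and substitute $E^2_{2,0}=\Tor^B_2(\Bbbk, B\otimes_A\Bbbk)$, $E^2_{0,1}=\Bbbk\otimes_B\Tor^A_1(B,\Bbbk)$, $E^2_{1,0}=\Tor^B_1(\Bbbk, B\otimes_A\Bbbk)$, and $H_i=\Tor^A_i(\Bbbk,\Bbbk)$ to recover the stated sequence.

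For part (2) the first step is the computation $\Tor^A_0(B,\Bbbk)=B\otimes_A\Bbbk\cong\Bbbk$: because $f$ is a graded surjection we have $f(A_{\geq 1})=B_{\geq 1}$, so $B\otimes_A(A/A_{\geq 1})=B/(B\cdot A_{\geq 1})=B/B_{\geq 1}=\Bbbk$. Feeding this into part (1) collapses the two edge terms to $\Tor^B_2(\Bbbk,\Bbbk)$ and $\Tor^B_1(\Bbbk,\Bbbk)$, giving the exact sequence
$$\Tor^A_2(\Bbbk,\Bbbk)\to\Tor^B_2(\Bbbk,\Bbbk)\xrightarrow{\gamma}\Bbbk\otimes_B\Tor^A_1(B,\Bbbk)\xrightarrow{\alpha}\Tor^A_1(\Bbbk,\Bbbk)\xrightarrow{\beta}\Tor^B_1(\Bbbk,\Bbbk)\to 0.$$
The next step is to identify the rightmost edge map $\beta$ with the natural map $\fm/\fm^2\to\fm_B/\fm_B^2$ induced by $f$, via the identification \eqref{E3.0.2}. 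The hypothesis that $\{x_j\}$ and $\{f(x_j)\}$ are minimal generating sets for $A$ and $B$ respectively says exactly that this map on indecomposables is an isomorphism, so $\beta$ is injective. Exactness at $\Tor^A_1(\Bbbk,\Bbbk)$ then forces $\alpha=0$, whence $\gamma$ is onto $\Bbbk\otimes_B\Tor^A_1(B,\Bbbk)$; truncating the sequence after this term yields the claimed three-term exact sequence, with exactness at $\Tor^B_2(\Bbbk,\Bbbk)$ inherited from the five-term sequence.

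Part (3) is then immediate: under the additional hypothesis $\Bbbk\otimes_B\Tor^A_1(B,\Bbbk)\cong\Tor^A_1(B,\Bbbk)$ one replaces the last nonzero term of the part (2) sequence by the isomorphic module to obtain \eqref{E5.1.1}.

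The only point demanding genuine care is verifying that the edge map $\beta$ really is the map on minimal generators induced by $f$, so that the ``same minimal generating set'' hypothesis has the stated consequence of injectivity; once this is in hand the remainder is a formal chase within the five-term exact sequence, and I anticipate no serious obstacle.
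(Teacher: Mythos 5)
Your proposal is correct and follows essentially the same route as the paper: part (1) is quoted from Rotman, part (2) collapses the five-term sequence using $B\otimes_A\Bbbk=\Bbbk$ together with the same-generating-set hypothesis, and part (3) is immediate. The one point you flagged — why the last map $\beta:\Tor^A_1(\Bbbk,\Bbbk)\to\Tor^B_1(\Bbbk,\Bbbk)$ is injective — is handled by you via naturality of the edge map (identifying $\beta$ with the induced map $\fm/\fm^2\to \fm_B/\fm_B^2$, which carries a basis to a basis), whereas the paper sidesteps that identification by noting $\Tor^A_1(\Bbbk,\Bbbk)\cong\Tor^B_1(\Bbbk,\Bbbk)$ as graded vector spaces, so that the surjectivity of $\beta$ supplied by the five-term sequence forces it to be an isomorphism degreewise; either justification is fine, and both close the gap in the same formal chase.
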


\begin{proof} 
(1) This is a special case of \cite[Theorem 10.59]{Ro}.
The five term exact sequence is given immediately after 
\cite[Theorem 10.59]{Ro}.

(2) If $f$ is surjective, then $B\otimes_A \Bbbk=\Bbbk$. 
Hence we have an exact sequence
$$\begin{aligned}
\Tor^A_2(\Bbbk,\Bbbk) &\to \Tor^B_2(\Bbbk, \Bbbk)
\to \Bbbk\otimes_B \Tor^A_1(B,\Bbbk)\\ 
&\to \Tor^A_1(\Bbbk,\Bbbk)
\to \Tor^B_1(\Bbbk, \Bbbk)\to 0.
\end{aligned}$$
Since $A$ and $B$ have the same minimal generating set,
$\Tor^A_1(\Bbbk,\Bbbk) \cong \Tor^B_1(\Bbbk, \Bbbk)$. 
So the assertion follows.

(3) This follows immediately from part (2).
\end{proof}

Part of the $E^2$ page of the spectral sequence in 
Lemma \ref{xxlem5.1}(1) looks like
$$\begin{matrix}
\Bbbk \otimes_B \Tor^A_3(B, \Bbbk) &
\Tor^B_{1} (\Bbbk, \Tor^A_3(B, \Bbbk)) &
\Tor^B_{2} (\Bbbk, \Tor^A_3(B, \Bbbk)) &  
\Tor^B_{3} (\Bbbk, \Tor^A_3(B, \Bbbk))  \\
\Bbbk \otimes_B \Tor^A_2(B, \Bbbk) &
\Tor^B_{1} (\Bbbk, \Tor^A_2(B, \Bbbk)) &
\Tor^B_{2} (\Bbbk, \Tor^A_2(B, \Bbbk)) &  
\Tor^B_{3} (\Bbbk, \Tor^A_2(B, \Bbbk)) \\
\Bbbk \otimes_B \Tor^A_1(B, \Bbbk) &
\Tor^B_{1} (\Bbbk, \Tor^A_1(B, \Bbbk)) &
\Tor^B_{2} (\Bbbk, \Tor^A_1(B, \Bbbk)) &  
\Tor^B_{3} (\Bbbk, \Tor^A_1(B, \Bbbk))  \\
\Bbbk \otimes_B (B\otimes_A \Bbbk) &
\Tor^B_{1} (\Bbbk, B\otimes_A \Bbbk) &
\Tor^B_{2} (\Bbbk, B\otimes_A \Bbbk) &  
\Tor^B_{3} (\Bbbk, B\otimes_A \Bbbk)  \\
\end{matrix}
$$

The differential on the $E^r$-page has degree $(-r, r-1)$,
namely, $d^r: E^r_{p,q}\to E^r_{p-r,q+r-1}$. For example,
the differential on the $E^2$-page is
$$d^2: \Tor^B_{p} (\Bbbk_{B}, \Tor^A_q(B, {_A \Bbbk}))
\to \Tor^B_{p-2} (\Bbbk_{B}, \Tor^A_{q+1}(B, {_A \Bbbk}))$$
for all $(p,q)$.

From now on suppose $B_A$ is finitely generated and $A$ is 
right noetherian. Then $\Tor^A_q(B, \Bbbk)$ is 
finite-dimensional. Filtering $\Tor^A_q(B,\Bbbk)$
by degree, we see that
\begin{equation}
\label{E5.1.2}\tag{E5.1.2}
\begin{aligned}
\deg \Tor^B_{p} (\Bbbk, \Tor^A_q(B, \Bbbk))
& \leq \deg \Tor^A_q(B, \Bbbk)+\deg \Tor^B_{p} (\Bbbk, \Bbbk)\\
&= t^A_q(B_A) +t^B_p(\Bbbk).
\end{aligned}
\end{equation}

The degree of an entry on the $E^2$ page is bounded by the 
maximum of the degree of $H_i(\text{Tot})$ corresponding to its 
diagonal and the degrees of the $E^2$ entries that are linked
to it by a differential on some page. Applying this to the 
bottom row yields
\begin{equation}
\label{E5.1.3}\tag{E5.1.3}
\deg \Tor^B_{i} (\Bbbk, B\otimes_A \Bbbk)
\leq \max\{
\{t^B_j(\Bbbk)+t^A_{i-j-1}(B_A)\}_{0\leq j\leq i-2},t^A_i(\Bbbk)\}.
\end{equation}
Considering the first column, we obtain
\begin{equation}
\label{E5.1.4}\tag{E5.1.4}
\deg \Bbbk \otimes_B \Tor^A_{i} (B, \Bbbk)
\leq \max\{
\{t^A_j(B_A)+t^B_{i-j+1}(\Bbbk)\}_{0\leq j\leq i-1},t^A_i(\Bbbk)\}.
\end{equation}

Similar to Definition \ref{xxdef1.1}(3), for each $i$, let 
$J_i\subseteq B$ be the annihilator ideal of the finite-dimensional 
left $B$-module $\Tor^A_i(B,\Bbbk)$. Let $J_{\leq i}$ denote 
$\bigcap_{j\leq i} J_i$; when $i$ is clear we will use $J$ for 
$J_{\leq i}$. Notice that $\Tor^A_i(B,\Bbbk)$ is naturally a 
graded left $B/J$-module and is generated as such in degrees 
at most $\deg (\Bbbk \otimes_B \Tor^A_i(B,\Bbbk))$.  Thus
\begin{equation}
\label{E5.1.5}\tag{E5.1.5}
t^A_i(B_A):=\deg \Tor^A_i(B,\Bbbk) \leq 
\deg (\Bbbk \otimes_B \Tor^A_i(B,\Bbbk))+\deg B/J_{\leq i}.
\end{equation}

For each non-negative integer $i$, let $D_i$ be a positive 
number which is greater than or equal to 
$$\max\left\{ \deg B/J_{\leq i}+ t^B_2(\Bbbk), 
\left\{\frac{t^B_{j+2}(\Bbbk)-t^B_2(\Bbbk)}{j}\right\}_{1 \leq j\leq i-1},
\quad
\left\{\frac{t^B_j(\Bbbk)}{j}\right\}_{1 \leq j\leq i}\right\}.$$

For every $j\leq i$, set
\begin{equation}
\label{E5.1.6}\tag{E5.1.6}
U^i_j(f):=\max_{i_s>0,\sum_{s} i_s=j}\left\{\sum_{s} 
\left(t^B_{i_s+1}(\Bbbk)+D_i-t^B_2(\Bbbk) \right)\right\}
\end{equation}
for $j>0$ and define $U^i_j(f):=-\infty$ for $j\leq 0$. For 
example,
$U^i_1(f)=D_i$ and $U^i_2(f) = 
\max\left\{2 D_i, t_{3}^B(\Bbbk) -t^B_2(\Bbbk)+ D_i \right\}$.
By definition, for $j+k\leq i$,
\begin{equation}
\label{E5.1.7}\tag{E5.1.7}
U^i_{j+k}(f)\geq U^i_j(f) + t^B_{k+1}(\Bbbk)+D_i - t^B_2(\Bbbk).
\end{equation}

\begin{notation}For the remainder of the section, we fix notation as in Lemma~\ref{xxlem5.1}. In particular, if $f:A \to B$ is a graded algebra map between connected graded domains, and $i$ is a positive integer, then we let $D_i$ be the positive integer defined after \eqref{E5.1.5}. Further, for every $j \leq i$, we let $U^i_j(f)$ be the value defined in \eqref{E5.1.6}.
\end{notation}

\begin{proposition}
\label{xxpro5.2}
Fix a positive integer $i$ and retain the above notation. For each 
$j \leq i$, we have
$$t^A_j(B_A)\leq \max\left\{U^i_j(f), 
\{t^A_k(\Bbbk)+(j-k) D_i\}_{0\leq k\leq j}\right\}
+D_i-t^B_2(\Bbbk).$$
\end{proposition}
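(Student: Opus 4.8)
The plan is to prove the inequality by induction on $j$, running from $j=0$ to $j=i$, using a single recursive estimate that packages the spectral sequence bounds \eqref{E5.1.4} and \eqref{E5.1.5}. Combining these two inequalities, and using that $J_{\leq j}\supseteq J_{\leq i}$ forces $\deg B/J_{\leq j}\leq \deg B/J_{\leq i}\leq D_i-t^B_2(\Bbbk)$ for every $j\leq i$, I would first record the recursion
\[ t^A_j(B_A)\leq \max\left\{t^A_j(\Bbbk),\ \{t^A_k(B_A)+t^B_{j-k+1}(\Bbbk)\}_{0\leq k\leq j-1}\right\}+\deg B/J_{\leq i}. \]
The base case $j=0$ is immediate, since the inner family is empty and $U^i_0(f)=-\infty$: the recursion gives $t^A_0(B_A)\leq t^A_0(\Bbbk)+D_i-t^B_2(\Bbbk)$, which is the asserted bound.

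For the inductive step I would bound each term of the maximum separately. The term $t^A_j(\Bbbk)+\deg B/J_{\leq i}$ is at most $t^A_j(\Bbbk)+D_i-t^B_2(\Bbbk)$, matching the $k=j$ entry of the family $\{t^A_k(\Bbbk)+(j-k)D_i\}$ in the target bound. For each term $t^A_k(B_A)+t^B_{j-k+1}(\Bbbk)$ with $k\leq j-1$, I would substitute the inductive bound for $t^A_k(B_A)$ and split into two cases according to which quantity realizes its maximum. If $t^A_k(B_A)\leq U^i_k(f)+D_i-t^B_2(\Bbbk)$, then the superadditivity estimate \eqref{E5.1.7}, applied with indices $k$ and $j-k$, yields $U^i_k(f)+t^B_{j-k+1}(\Bbbk)\leq U^i_j(f)-(D_i-t^B_2(\Bbbk))$; the extra copy of $\deg B/J_{\leq i}\leq D_i-t^B_2(\Bbbk)$ is then absorbed, so this term is $\leq U^i_j(f)+D_i-t^B_2(\Bbbk)$. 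If instead $t^A_k(B_A)\leq t^A_l(\Bbbk)+(k-l)D_i+D_i-t^B_2(\Bbbk)$ for some $0\leq l\leq k$, the term is bounded by $t^A_l(\Bbbk)+(j-l)D_i+D_i-t^B_2(\Bbbk)$ provided $t^B_{j-k+1}(\Bbbk)+\deg B/J_{\leq i}\leq (j-k)D_i$; since $l\leq k\leq j-1<j$, the index $l$ is an admissible entry of the claimed maximum.

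The crux is verifying that last inequality, and this is exactly where the defining bounds for $D_i$ enter. Writing $p=j-k\geq 1$ and using $\deg B/J_{\leq i}\leq D_i-t^B_2(\Bbbk)$, it suffices to check $t^B_{p+1}(\Bbbk)-t^B_2(\Bbbk)\leq (p-1)D_i$: for $p=1$ this reads $0\leq 0$, and for $2\leq p\leq i$ it is the constraint $D_i\geq (t^B_{p+1}(\Bbbk)-t^B_2(\Bbbk))/(p-1)$ built into the definition of $D_i$, with index $p-1\in\{1,\dots,i-1\}$. I expect the main obstacle to be purely the bookkeeping of the uniform shift by $D_i-t^B_2(\Bbbk)$: one must track it carefully through both cases so that exactly one copy survives, and confirm that the two families of constraints defining $D_i$ (the $(t^B_{p+1}(\Bbbk)-t^B_2(\Bbbk))/(p-1)$ bounds and the $\deg B/J_{\leq i}+t^B_2(\Bbbk)$ bound) are precisely those needed to close the induction and match the definition of $U^i_j(f)$.
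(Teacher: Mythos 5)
Your proposal is correct and takes essentially the same route as the paper's own proof: induction on $j$ using the recursion obtained by combining \eqref{E5.1.4} and \eqref{E5.1.5} (with the monotonicity $\deg B/J_{\leq j}\leq \deg B/J_{\leq i}$), the superadditivity bound \eqref{E5.1.7} to absorb the $U^i_k(f)$ terms, and exactly the constraints $D_i\geq \deg B/J_{\leq i}+t^B_2(\Bbbk)$ and $D_i\geq (t^B_{p+1}(\Bbbk)-t^B_2(\Bbbk))/(p-1)$ to close the inductive step. The only difference is cosmetic bookkeeping: you carry the shift $\deg B/J_{\leq i}$ through each case, whereas the paper bounds $t^A_k(B_A)+t^B_{j-k+1}(\Bbbk)$ first and adds $D_i-t^B_2(\Bbbk)$ uniformly at the end.
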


\begin{proof}
We prove the assertion by induction on $j$. First let $j = 0$. 
By the definition of $J_{\leq 0}=\ann_B (B \otimes_A \Bbbk)$, 
we have $J_{\leq 0} \cdot(B \otimes_A \Bbbk) = 
J_{\leq 0} \cdot( B/ BA_+) = 0$ so $J_{\leq 0}\subseteq BA_+$. 
Then
\[ t^A_0(B_A)=\deg B/BA_{+}\leq \deg B/J_{\leq 0}
\leq \deg B/J_{\leq i}\leq D_i - t^B_2(\Bbbk)
\]
which is the assertion when $j = 0$.

For the inductive step, we assume that $i > 0$. Fix $j \leq i$.
Then for $0\leq k\leq j-1$,
$$\begin{aligned}
t^A_k(B_A)&+t^B_{j-k+1}(\Bbbk)\\
&\overset{\text{ind. hyp.}}{\leq}
\max\left\{U^i_k(f), 
\{t^A_{\ell}(\Bbbk)+(k-{\ell})D_i\}_{0\leq \ell \leq k}\right\}
+D_i - t^B_2(\Bbbk) \\
&\qquad \qquad +t^B_{j-k+1}(\Bbbk)
\\
&\;\;\;\; \leq  \;\;\;\;
\max\left\{U^i_k(f) + D_i - t^B_2(\Bbbk)+t^B_{j-k+1}(\Bbbk),\right. \\
& \qquad \qquad
\left. \{t^A_{\ell}(\Bbbk) + (k-\ell)D_i + D_i - t^B_2(\Bbbk)
+t^B_{j-k+1}(\Bbbk)\}_{0\leq \ell\leq k}\right\}\\
&\overset{\eqref{E5.1.7}}{\leq} 
\max\{U^i_j(f), \\
& \qquad  
\{t^A_{\ell}(\Bbbk)+(j-\ell)D_i + (k+1-j)D_i - t^B_2(\Bbbk)
+t^B_{j-k+1}(\Bbbk)\}_{0\leq \ell\leq k}\}\\
&\; \overset{\text{def.} D_i}{\leq}  \;
\max\left\{U^i_j(f), \{t^A_{\ell}(\Bbbk)+(j-\ell)D_i\}_{0\leq \ell \leq k}\right\}\\
&\;\;\;\; \leq \;\;\;\;
\max\{U^i_j(f), \{t^A_{\ell} (\Bbbk)+(j-\ell)D_i\}_{0\leq \ell\leq j-1}\}.
\end{aligned}
$$
We use the above inequality to see that
$$\begin{aligned}
t^A_j(B_A)
& \overset{\eqref{E5.1.5}}{\leq} 
\deg (\Bbbk \otimes_B \Tor^A_j(B,\Bbbk))+\deg B/J_{\leq j} \\
&\;\overset{\text{def. } D_i}{\leq}\;
\deg (\Bbbk \otimes_B \Tor^A_j(B,\Bbbk)) + D_i - t^B_2(\Bbbk)\\
& \overset{\eqref{E5.1.4}}{\leq }
\max\{
\{t^A_k(B_A)+t^B_{j-k+1}(\Bbbk)\}_{0\leq k \leq j-1},
t^A_j(\Bbbk)\} +D_i - t^B_2(\Bbbk)\\
&\;\;\;\; \leq \;\;\;\;
\max\{U^i_j(f), \{t^A_{\ell}(\Bbbk)+(j-{\ell})D_i\}_{0\leq \ell \leq j-1},
t^A_j(\Bbbk)\} +D_i-t^B_2(\Bbbk)\\
&\;\;\;\; = \;\;\;\; 
\max\{U^i_j(f), \{t^A_{\ell}(\Bbbk)+(j-{\ell})D_i\}_{0\leq {\ell} \leq j}\} 
+D_i - t^B_2(\Bbbk),
\end{aligned}
$$
completing  the proof.
\end{proof}

\begin{lemma}
\label{xxlem5.3}
Retain the hypotheses of Proposition \ref{xxpro5.2}. 
Then, for every $j \leq i$, $U^i_j(f)\leq j D_i$.
\end{lemma}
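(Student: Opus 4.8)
The plan is to unwind the definition of $U^i_j(f)$ and reduce the claim to a single-part inequality that is checked directly against the defining inequalities for $D_i$. Recall that $U^i_j(f)$ is the maximum, over all ways of writing $j=\sum_s i_s$ with each $i_s>0$, of the quantity $\sum_s\bigl(t^B_{i_s+1}(\Bbbk)+D_i-t^B_2(\Bbbk)\bigr)$. Since the target $jD_i=\sum_s i_s D_i$ also decomposes along the parts of the chosen composition, it suffices to prove the summand-by-summand bound
\[
t^B_{i_s+1}(\Bbbk)+D_i-t^B_2(\Bbbk)\leq i_s D_i
\]
for each part $i_s$, and then to sum over $s$.

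To establish this single-part inequality I would split into two cases according to the value of $i_s$, first observing that since all parts are positive and sum to $j\leq i$, each part satisfies $1\leq i_s\leq i$. If $i_s=1$, then $t^B_{i_s+1}(\Bbbk)=t^B_2(\Bbbk)$, so the left-hand side is exactly $D_i=i_s D_i$ and the inequality holds with equality. If $i_s\geq 2$, then $i_s-1$ lies in the range $1\leq i_s-1\leq i-1$, so the defining inequality $D_i\geq \frac{t^B_{(i_s-1)+2}(\Bbbk)-t^B_2(\Bbbk)}{i_s-1}$ applies and yields $t^B_{i_s+1}(\Bbbk)\leq (i_s-1)D_i+t^B_2(\Bbbk)$; substituting this gives $t^B_{i_s+1}(\Bbbk)+D_i-t^B_2(\Bbbk)\leq i_s D_i$, as required.

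With the single-part inequality in hand I would sum over the parts of an arbitrary composition to obtain $\sum_s\bigl(t^B_{i_s+1}(\Bbbk)+D_i-t^B_2(\Bbbk)\bigr)\leq\bigl(\sum_s i_s\bigr)D_i=jD_i$, and since this holds for every composition, taking the maximum gives $U^i_j(f)\leq jD_i$. There is no real difficulty here; the one point requiring care — and the only place the argument could go wrong — is verifying that the index ranges in the definition of $D_i$ match the ones I need. Specifically, the bound $t^B_{i_s+1}(\Bbbk)\leq(i_s-1)D_i+t^B_2(\Bbbk)$ is invoked precisely when $2\leq i_s\leq i$, which is guaranteed because no part of a composition of $j\leq i$ can exceed $i$. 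Note that the other defining property of $D_i$ (the bound involving $\deg B/J_{\leq i}$) plays no role in this lemma.
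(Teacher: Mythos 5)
Your proof is correct and is essentially the paper's own argument: the paper likewise re-indexes the defining condition $D_i \geq \frac{t^B_{j+2}(\Bbbk)-t^B_2(\Bbbk)}{j}$ (for $1\leq j\leq i-1$) into the single-part inequality $t^B_{j+1}(\Bbbk)+D_i-t^B_2(\Bbbk)\leq jD_i$ for $1\leq j\leq i$ and then sums over the parts of each composition. Your explicit treatment of the $i_s=1$ case (equality) just spells out a point the paper leaves implicit.
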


\begin{proof} By definition, 
\[D_i\geq \left\{\frac{t^B_{j+1}(\Bbbk)-
t^B_2(\Bbbk)}{j-1}\right\}_{2 \leq j\leq i}\]
which is equivalent to
\[t^B_{j+1}(\Bbbk)+D_i-t^B_2(\Bbbk)\leq j D_i
\]
for all $1\leq j\leq i$. Now the assertion 
follows easily from the definition of $U^i_j(f)$ 
\eqref{E5.1.6}.
\end{proof}

\begin{lemma}
\label{xxlem5.4}
Retain the hypotheses of Proposition \ref{xxpro5.2} 
and assume that, for all $0< k \leq i$,
\begin{equation}
\label{E5.4.1}\tag{E5.4.1}
D_i \geq \frac{t^A_k(\Bbbk)}{k}.
\end{equation}
Then for all $1 \leq j \le i$, 
\[t^A_j(B_A)\leq (j+1) D_i - t^B_2(\Bbbk).\]
\end{lemma}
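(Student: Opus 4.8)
The plan is to feed the estimate from Proposition~\ref{xxpro5.2} directly into the two bounds already at hand --- Lemma~\ref{xxlem5.3} and the standing hypothesis \eqref{E5.4.1} --- and to observe that every quantity inside the maximum is at most $jD_i$. This reduces the lemma to pure bookkeeping.

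Fix $j$ with $1\le j\le i$. By Proposition~\ref{xxpro5.2},
$$t^A_j(B_A)\le \max\left\{U^i_j(f),\ \{t^A_k(\Bbbk)+(j-k)D_i\}_{0\le k\le j}\right\}+D_i-t^B_2(\Bbbk),$$
so it suffices to show that the maximum appearing here is at most $jD_i$. First, Lemma~\ref{xxlem5.3} gives $U^i_j(f)\le jD_i$, which disposes of the first entry. For the remaining entries I would treat the index $k=0$ separately: since $\Tor^A_0(\Bbbk,\Bbbk)=\Bbbk$ is concentrated in degree $0$, we have $t^A_0(\Bbbk)=0$, whence $t^A_0(\Bbbk)+jD_i=jD_i$. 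For $0<k\le j$ I would invoke the hypothesis \eqref{E5.4.1}, which (since $0<k\le j\le i$) reads exactly $t^A_k(\Bbbk)\le kD_i$; therefore $t^A_k(\Bbbk)+(j-k)D_i\le kD_i+(j-k)D_i=jD_i$.

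Combining these observations, every term inside the maximum is $\le jD_i$, hence the maximum itself is $\le jD_i$, and substituting back yields
$$t^A_j(B_A)\le jD_i+D_i-t^B_2(\Bbbk)=(j+1)D_i-t^B_2(\Bbbk),$$
as desired.

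As for the main obstacle: there is none of real substance, since the lemma is a direct consequence of Proposition~\ref{xxpro5.2}. The only points requiring a moment's care are verifying that the range $0<k\le i$ over which \eqref{E5.4.1} is assumed covers all indices $k$ with $0<k\le j$ that occur in the maximum (it does, because $j\le i$), and handling the boundary term $k=0$, to which \eqref{E5.4.1} does not apply but where $t^A_0(\Bbbk)=0$ makes the estimate immediate.
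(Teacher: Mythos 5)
Your proof is correct and is essentially the same argument as the paper's: apply Proposition~\ref{xxpro5.2}, bound $U^i_j(f)$ by $jD_i$ via Lemma~\ref{xxlem5.3}, and bound the remaining terms by $jD_i$ using \eqref{E5.4.1}. Your explicit treatment of the $k=0$ term (via $t^A_0(\Bbbk)=0$) is a detail the paper's proof glosses over, but it is the same proof.
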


\begin{proof} 
Under the hypothesis, we have that for all 
$1 \leq  j, k \leq i$,
\[t^A_k(\Bbbk)+(j-k)D_i \leq j D_i.
\]
By Lemma~\ref{xxlem5.3}, we also have that $U^i_j(f) \leq j D_i$.
Hence, by Proposition~\ref{xxpro5.2}, 
\begin{align*}t^A_j (B_A) &\leq \max\left\{U^i_j(f), 
\{t^A_k(\Bbbk)+(j-k) D_i\}_{0\leq k\leq j}\right\}
+D_i-t^B_2(\Bbbk) \\ 
&\leq j D_i + D_i - t^B_2(\Bbbk) = (j+1)D_i - t^B_2(\Bbbk),
\end{align*}
as desired.
\end{proof}

Now we have an immediate consequence.

\begin{corollary}
\label{xxcor5.5} 
Retain the hypotheses of Proposition \ref{xxpro5.2}. 
Suppose that $D_i$ is a number larger than or equal to
$$\max\left\{ \deg B/J_{\leq i}+ t^B_2(\Bbbk), 
\left\{\frac{t^B_{j+2}(\Bbbk)-t^B_2(\Bbbk)}{j}\right\}_{1 \leq j\leq i-1},
\left\{\frac{t^A_j(\Bbbk)}{j}\right\}_{1 \leq j\leq i},
\left\{\frac{t^B_j(\Bbbk)}{j}\right\}_{1 \leq j\leq i}
\right\}.$$
Then for all $1 \leq j \leq i$,
$t^A_j(B_A)\leq (j+1)D_i - t^B_2(\Bbbk)$.
\end{corollary}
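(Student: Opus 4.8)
The plan is to recognize that Corollary~\ref{xxcor5.5} is essentially a restatement of Lemma~\ref{xxlem5.4} with the hypotheses on $D_i$ combined into a single display. The key observation is that the quantity $D_i$ in the statement of Corollary~\ref{xxcor5.5} is required to be at least the maximum of four families of numbers. The first two families,
$$\deg B/J_{\leq i}+ t^B_2(\Bbbk) \quad\text{and}\quad \left\{\tfrac{t^B_{j+2}(\Bbbk)-t^B_2(\Bbbk)}{j}\right\}_{1 \leq j\leq i-1},$$
together with the fourth family $\{t^B_j(\Bbbk)/j\}_{1\le j\le i}$, are exactly the three families appearing in the definition of an admissible $D_i$ for Proposition~\ref{xxpro5.2} (and hence for Lemma~\ref{xxlem5.4}). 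The only additional family is the third, $\{t^A_j(\Bbbk)/j\}_{1\le j\le i}$, which is precisely the extra hypothesis \eqref{E5.4.1} of Lemma~\ref{xxlem5.4}.

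First I would observe that, since $D_i$ is assumed to be greater than or equal to the maximum displayed in Corollary~\ref{xxcor5.5}, in particular it is greater than or equal to the maximum of the three families
$$\max\left\{ \deg B/J_{\leq i}+ t^B_2(\Bbbk),\ \left\{\tfrac{t^B_{j+2}(\Bbbk)-t^B_2(\Bbbk)}{j}\right\}_{1 \leq j\leq i-1},\ \left\{\tfrac{t^B_j(\Bbbk)}{j}\right\}_{1 \leq j\leq i}\right\}$$
that defines an admissible constant for Proposition~\ref{xxpro5.2}. Thus the standing hypotheses of Proposition~\ref{xxpro5.2} (and of Lemma~\ref{xxlem5.4}) are met with this choice of $D_i$. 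Second, I would note that the bound $D_i \geq t^A_k(\Bbbk)/k$ for all $0 < k \leq i$, which is the hypothesis \eqref{E5.4.1} required by Lemma~\ref{xxlem5.4}, follows immediately from the inclusion of the family $\{t^A_j(\Bbbk)/j\}_{1\le j\le i}$ inside the maximum bounding $D_i$.

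Having verified both sets of hypotheses, the conclusion $t^A_j(B_A)\leq (j+1)D_i - t^B_2(\Bbbk)$ for all $1 \leq j \leq i$ is then exactly the conclusion of Lemma~\ref{xxlem5.4}, so the corollary follows by direct application. There is essentially no obstacle here: the entire content is bookkeeping, checking that the single maximum in the corollary's statement dominates each of the separate hypotheses needed by the underlying lemma. The only point requiring minor care is to confirm that the indexing ranges match — that the family $\{t^B_{j+2}(\Bbbk)-t^B_2(\Bbbk))/j\}_{1\le j\le i-1}$ appearing here is the same as the one used to define $D_i$ before Proposition~\ref{xxpro5.2}, which it is verbatim. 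I therefore expect the proof to be one or two sentences invoking Lemma~\ref{xxlem5.4} after remarking that its hypotheses hold.
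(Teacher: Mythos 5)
Your proposal is correct and matches the paper's approach exactly: the paper presents Corollary~\ref{xxcor5.5} as an ``immediate consequence'' of the preceding results, precisely because the displayed maximum contains the three families defining an admissible $D_i$ for Proposition~\ref{xxpro5.2} together with the extra family $\{t^A_j(\Bbbk)/j\}_{1\le j\le i}$ that constitutes hypothesis \eqref{E5.4.1} of Lemma~\ref{xxlem5.4}. Your bookkeeping verification, followed by a direct invocation of Lemma~\ref{xxlem5.4}, is exactly what the paper intends.
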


This recovers the result in the commutative case \cite[Corollary 5.2]{CS}.
To see this note that in the setting of \cite[Corollary 5.2]{CS}, 
$B=\Bbbk[x_1,\cdots,x_n]$ with $\deg x_i=1$ for all $i$ and 
$A=\Bbbk[y_1,\cdots,y_n]$ with $\deg y_i=d_i$ with
$\{d_1,\cdots,d_n\}$ non-increasing. Then $t^B_j(\Bbbk)=\begin{cases}
j& 0\leq j\leq n\\ 0 & {\text{otherwise}}\end{cases}$, and 
$t^A_j(\Bbbk)/j\leq d_1 \leq \deg B/BA_{\geq 1}=\deg B/J_{\geq i}$
for all $j\leq i$. Thus we can take $D_i=\deg B/BA_{\geq 1}+2$
(which is independent of $i$).
By Corollary \ref{xxcor5.5}, we have 
$$t^A_j(B)\leq (j+1) (\deg B/BA_{\geq 1}+2)-2$$
which is the second statement of \cite[Corollary 5.2]{CS}.
The first statement of \cite[Corollary 5.2]{CS} follows from
Proposition \ref{xxpro5.2} (details are omitted). 

Now suppose $H$ is a semisimple Hopf algebra acting on $B$ 
homogeneously and let $C=B^H$. Suppose that $f: A\to C$ is a 
surjective map of graded algebras and consider it as a graded 
algebra map $A\to B$. 
Recall that  $J_i\subseteq B$ 
denotes the annihilator ideal of the finite-dimensional left 
$B$-module $\Tor^A_i(B,\Bbbk)$. 

\begin{theorem}
\label{xxthm5.6}
Let $(T,H)$ be as in Hypothesis \ref{xxhyp0.3} and assume 
that $T$ is Koszul. Let $J_{\infty}=\cap_{j\geq 0} J_j$. Let 
$S$ be a noetherian AS regular algebra that maps onto $R:=T^H$ 
surjectively such that $t^S_j(\Bbbk)\leq j (\deg T/J_{\infty}+2)$ 
for all $j\geq 0$. Then 
$$t^S_i(R_S)\leq i(\deg T/J_{\infty} + 2)+\deg T/J_{\infty}$$
for all $i\geq 0$.
\end{theorem}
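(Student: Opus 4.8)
\emph{Overall strategy.} The plan is to apply Proposition~\ref{xxpro5.2} and Corollary~\ref{xxcor5.5} with $A=S$ and $B=T$, regarding the surjection $f\colon S\to R:=T^H$ as a graded algebra map $S\to T$; then the ideals $J_i$ are precisely the annihilators of $\Tor^S_i(T,\Bbbk)$, as required. Those results bound $t^S_j(T_S)$, while the conclusion concerns $t^S_j(R_S)$, so I would first reduce from $R$ to $T$ and then choose the parameter $D_i$ of Corollary~\ref{xxcor5.5} uniformly in $i$.

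\emph{Reduction to $T_S$.} Because $H$ is semisimple, fix a two-sided integral $\Lambda\in H$ with $\varepsilon(\Lambda)=1$ and let $\pi\colon T\to R$, $\pi(t)=\Lambda\cdot t$, be the Reynolds operator; it is idempotent with image $T^H=R$ and $\pi|_R=\id$. Using $h\cdot(tr)=\sum(h_{1}\cdot t)(h_{2}\cdot r)$ together with $h\cdot r=\varepsilon(h)r$ for $r\in R$, one checks $\pi(tr)=\pi(t)r$, so $\pi$ is right $R$-linear, hence right $S$-linear once $S$ acts on $T$ through $f$. Thus $\pi$ splits the inclusion of the right $S$-submodule $R\subseteq T$, giving $T\cong R\oplus\ker\pi$ as right $S$-modules. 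Consequently $\Tor^S_i(R,\Bbbk)$ is a graded direct summand of $\Tor^S_i(T,\Bbbk)$, and
$$t^S_i(R_S)\leq t^S_i(T_S)\qquad\text{for all }i\geq 0.$$
Hence it suffices to bound $t^S_j(T_S)$.

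\emph{Choosing $D_i$ and applying Corollary~\ref{xxcor5.5}.} The standing hypotheses hold: $S$ is noetherian, and $T$ is finitely generated as a right $S$-module, since $T$ is finitely generated over $R$ on both sides by \cite[Corollary 4.3.5 and Theorem 4.4.2]{Mo} and $R=f(S)$. Put $D:=\deg T/J_{\infty}+2$. As $T$ is Koszul, $t^T_j(\Bbbk)=j$ for all $j$; in particular $t^T_2(\Bbbk)=2$, every term $(t^T_{j+2}(\Bbbk)-t^T_2(\Bbbk))/j$ equals $1$, and every $t^T_j(\Bbbk)/j$ equals $1$. The hypothesis $t^S_j(\Bbbk)\leq j(\deg T/J_{\infty}+2)$ gives $t^S_j(\Bbbk)/j\leq D$, and $J_{\infty}\subseteq J_{\leq i}$ gives $\deg T/J_{\leq i}\leq \deg T/J_{\infty}$, so $\deg T/J_{\leq i}+t^T_2(\Bbbk)\leq D$. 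Therefore $D$ dominates the maximum defining $D_i$ for every $i$, and Corollary~\ref{xxcor5.5} yields, for all $j\geq 1$,
$$t^S_j(T_S)\leq (j+1)D-t^T_2(\Bbbk)=(j+1)(\deg T/J_{\infty}+2)-2=j(\deg T/J_{\infty}+2)+\deg T/J_{\infty},$$
which together with $t^S_j(R_S)\leq t^S_j(T_S)$ is the asserted bound for $j\geq 1$. For $j=0$, surjectivity of $f$ gives $R\otimes_S\Bbbk=R/R_{\geq 1}=\Bbbk$, so $t^S_0(R_S)=0\leq\deg T/J_{\infty}$.

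\emph{Main obstacle.} The technical core is already contained in Proposition~\ref{xxpro5.2} and Corollary~\ref{xxcor5.5}. The two points requiring care are: recognizing that the corollary must be applied to $T_S$ and only then transferred to the summand $R$, where semisimplicity of $H$ is exactly what provides the splitting; and verifying that the Koszulity of $T$ together with the hypothesis on $t^S_j(\Bbbk)$ lets $D_i$ be chosen independently of $i$, so that the bound holds simultaneously for all $j$.
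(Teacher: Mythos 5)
Your proposal is correct and is essentially the paper's own argument: the paper likewise applies Corollary~\ref{xxcor5.5} with $(A,B)=(S,T)$ and the uniform choice $D=\deg T/J_{\infty}+2$ (checking, via Koszulity of $T$ and the hypothesis on $t^S_j(\Bbbk)$, that $D$ dominates every term in the defining maximum), and then concludes by citing that $R$ is a direct summand of $T$ as a right $S$-module. The only difference is that you spell out details the paper leaves implicit --- the Reynolds-operator splitting giving the summand, the term-by-term verification of the maximum, and the trivial $i=0$ case.
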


\begin{proof} 
Since $T$ is Koszul, $t^T_j(\Bbbk)=j$ for all $0\leq j\leq \gldim T$. 
In particular, $t^T_2(\Bbbk)=2$. Under the hypotheses of this theorem, 
one can check that $\deg T/J_{\infty}+2$ is at least equal to each 
term in the $\max$-expression in Corollary \ref{xxcor5.5} 
(letting $(A,B) = (S,T)$). Therefore, if we take 
$D=\deg T/J_{\infty}+2$ and apply Corollary \ref{xxcor5.5}, we obtain
that 
$$t^S_i(T_S)\leq (i+1) D - t^T_2(\Bbbk)= i D+D-2.$$ 
Since $R$ is a direct summand of $T$ as a right $S$-module,
the assertion follows.
\end{proof}

This is a noncommutative version of \cite[Theorem 1.2 (part 1)]{CS}.
To see this we let $T$ be the polynomial ring
$\Bbbk[x_1,\cdots,x_n]$ with $\deg x_i=1$ (that is $B$ in 
\cite[Theorem 1.2]{CS}). Let $S$ be another polynomial ring 
mapping onto $R:=T^H$ (where $H=\Bbbk G$ for some finite
group in the setting of \cite[Theorem 1.2 (part 1)]{CS}). 
In the commutative case 
$$D:=\deg T/J_{\infty} + 2=\deg T/TR_{\geq 1}+2=\tau_H(T)+1
\leq |G|+1$$
where the last $\leq$ follows from Fogarty's result 
\cite{Fo}, or equivalently, Proposition \ref{xxpro3.11}(1) by 
taking $d=1$ and $m=0$. By Theorem \ref{xxthm5.6}, 
$$\begin{aligned}
t^S_i(_SR)\leq i D+D-2&=(i+1)D -2=(i+1)(\tau_{H}(T)+1)-2\\
&=(i+1)\tau_{H}(T)+i-1\leq (i+1)|G|+i-1
\end{aligned}
$$
which recovers exactly \cite[Theorem 1.2 (part 1)]{CS}).

\begin{remark}
\label{xxrem5.7}
When $T^H$ is noncommutative, the hypothesis in Theorem 
\ref{xxthm5.6} does not hold automatically. While there are 
cases where it is known that such an AS regular algebra $S$ 
exists (see e.g. \cite{KKZ5}, \cite{CKWZ2}), it is unknown 
if this holds in general. For a general connected graded 
algebra $A$, we can make the following comments.
\begin{enumerate}
\item[(1)]
If $A$ is finite-dimensional, then there is a noetherian AS 
regular algebra $S$ and a surjective algebra map $f: S\to A$
\cite[p. 34]{PZ}.
\item[(2)]
Let $A$ be the noetherian connected graded domain given in 
\cite[Theorem 2.3]{SZ}. Then $A$ has GK dimension 2 and 
does not satisfy the $\chi$-condition. For each integer 
$d\geq 2$, let $B$ be the polynomial extension 
$A[x_1,\cdots,x_{d-2}]$. Then $B$ is a noetherian connected 
graded domain of GK-dimension $d$ that does not satisfy the 
$\chi$-condition \cite[Theorem 8.3]{AZ}. We claim that there 
is no surjective homomorphism from a noetherian AS regular 
algebra $S$ to $B$ (nor a graded algebra homomorphism 
from $S$ to $B$ such that $B$ is finitely generated over $S$
on both sides). Suppose to the contrary that there is an AS 
regular algebra $S$ and a surjective homomorphism from $S$ to 
$B$. By \cite[Theorem 8.1]{AZ}, $S$ satisfies the 
$\chi$-condition, and by \cite[Theorem 8.3]{AZ}, so does $B$. 
This yields a contradiction.
\item[(3)]
When $\GKdim A=1$, it is still unknown if there exists a
surjective homomorphism from a noetherian AS regular algebra 
$S$ to $A$.
\end{enumerate}
\end{remark}

Next we prove a noncommutative version of \cite[Theorem 1.3]{CS}.
One can recover \cite[Theorem 1.3]{CS} from Proposition 
\ref{xxpro5.8} by specializing to the commutative situation, but we
omit those details here. We return to the setting in Proposition 
\ref{xxpro5.2}. 

\begin{proposition}
\label{xxpro5.8} 
Let $f: A\to B$ be a graded algebra homomorphism of connected 
graded algebras and let $C=\im (f)$. Assume the following:
\begin{enumerate}
\item[(a)]
$B$ is generated in degree 1,
\item[(b)]
$C$ is a direct summand of $B$ as a right $A$-module,
\item[(c)]
$D_i$ is the number given in Corollary \ref{xxcor5.5}, and
\item[(d)] $\displaystyle D_i \geq \max\left\{\frac{t^A_j(\Bbbk)
+t^B_2(\Bbbk)}{j}\right\}_{2\leq j\leq i}.$
\end{enumerate}
Then $t^C_0(\Bbbk)=0$, $t^C_{1}(\Bbbk)\leq D_i - t^B_2(\Bbbk)+1$, 
and, for $2\leq j \leq i$, 
$$t^C_{j}(\Bbbk)\leq j D_i-t^B_2(\Bbbk).$$
\end{proposition}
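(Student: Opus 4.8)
The plan is to split the argument by the homological degree $j$. The case $j=0$ is immediate: since $C=\im(f)$ is connected graded, $\Tor^C_0(\Bbbk,\Bbbk)=\Bbbk$ sits in degree $0$, so $t^C_0(\Bbbk)=0$. For $j=1$ I would appeal to Lemma \ref{xxlem3.2} rather than to any spectral sequence. Hypotheses (a) and (b) are exactly the hypotheses of that lemma after interchanging the roles of its two algebras, so its part (3) gives $t^C_1(\Bbbk)=\beta(C)\le t^A_0(B_A)+1$. I would then bound $t^A_0(B_A)=\deg B/BA_{\ge 1}\le \deg B/J_{\le 0}\le \deg B/J_{\le i}\le D_i-t^B_2(\Bbbk)$, where the successive inequalities use $J_{\le 0}\subseteq BA_{\ge 1}$ (exactly as in the proof of Proposition \ref{xxpro5.2}), $J_{\le i}\subseteq J_{\le 0}$, and the first entry in the $\max$ defining $D_i$. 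This yields $t^C_1(\Bbbk)\le D_i-t^B_2(\Bbbk)+1$.

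For $2\le j\le i$ I would run the change of rings spectral sequence of Lemma \ref{xxlem5.1}(1) for the surjection $A\to C$, namely $E^2_{p,q}=\Tor^C_p(\Bbbk,\Tor^A_q(C,\Bbbk))\Longrightarrow \Tor^A_{p+q}(\Bbbk,\Bbbk)$. Under the standing hypotheses of Proposition \ref{xxpro5.2}, assumption (b) makes $C_A$ a direct summand of the finitely generated module $B_A$, so each $\Tor^A_q(C,\Bbbk)$ is finite-dimensional; filtering it by degree as in \eqref{E5.1.2} gives $\deg E^2_{p,q}\le t^C_p(\Bbbk)+t^A_q(C_A)$. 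The key structural observation is that the bottom row receives no differentials, because the putative source $E^r_{p+r,\,1-r}$ vanishes for $r\ge 2$; it only emits $d^r\colon E^r_{p,0}\to E^r_{p-r,\,r-1}$. Hence each $E^{r+1}_{p,0}$ is a graded subspace of $E^r_{p,0}$, and tracking degrees through these kernels, together with the fact that $E^\infty_{p,0}$ is a subquotient of the abutment $\Tor^A_p(\Bbbk,\Bbbk)$, yields the recursion
\[
t^C_p(\Bbbk)=\deg E^2_{p,0}\le \max\Bigl\{t^A_p(\Bbbk),\ \max_{2\le r\le p}\deg E^2_{p-r,\,r-1}\Bigr\}.
\]

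I would close the estimate by induction on $p$, with base data $t^C_0(\Bbbk)=0$ and $t^C_1(\Bbbk)\le D_i$. Hypothesis (b) gives $t^A_{r-1}(C_A)\le t^A_{r-1}(B_A)$, and Corollary \ref{xxcor5.5} bounds the latter by $rD_i-t^B_2(\Bbbk)$, while assumption (d) gives $t^A_p(\Bbbk)\le pD_i-t^B_2(\Bbbk)$. Combining $\deg E^2_{p-r,\,r-1}\le t^C_{p-r}(\Bbbk)+t^A_{r-1}(C_A)$ with the inductive hypothesis, a short case analysis on the value of $p-r$ shows every term on the right-hand side of the recursion is $\le pD_i-t^B_2(\Bbbk)$ (using Corollary \ref{xxcor5.5}, the bound $t^C_1(\Bbbk)\le D_i$, and $t^B_2(\Bbbk)\ge 0$), so the induction delivers $t^C_p(\Bbbk)\le pD_i-t^B_2(\Bbbk)$.

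The main obstacle is the spectral-sequence degree bookkeeping of the second step: one must argue that the bottom row $\Tor^C_\bullet(\Bbbk,\Bbbk)$ is controlled entirely by the abutment and by the off-diagonal $E^2$-entries into which it maps, and then re-express those entries—via the filtration estimate and hypothesis (b)—in terms of the already-bounded quantities $t^A_q(B_A)$ (Corollary \ref{xxcor5.5}) and the lower syzygy degrees $t^C_{<p}(\Bbbk)$, so that the induction is self-contained. A secondary subtlety worth flagging is that the sharp $j=1$ bound genuinely requires Lemma \ref{xxlem3.2}: the spectral sequence by itself only gives $t^C_1(\Bbbk)\le \beta(A)\le D_i$, one step short of the claimed $D_i-t^B_2(\Bbbk)+1$.
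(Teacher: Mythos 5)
Your proposal is correct and takes essentially the same route as the paper's proof: $j=0$ is immediate, $j=1$ comes from Lemma \ref{xxlem3.2}(3) combined with $t^A_0(B_A)\leq \deg B/J_{\leq i}\leq D_i-t^B_2(\Bbbk)$, and for $2\leq j\leq i$ one applies the bottom-row spectral-sequence estimate to the surjection $A\to C$ (this is exactly \eqref{E5.1.3}, which the paper cites rather than re-derives as you do) and then runs the identical induction with the same three-case analysis $k=0$, $k=1$, $2\leq k\leq j-2$, using Corollary \ref{xxcor5.5}, the direct-summand hypothesis, and hypothesis (d). The only difference is cosmetic bookkeeping, plus your (accurate) side remark that the sharper $j=1$ bound genuinely needs Lemma \ref{xxlem3.2} rather than the spectral sequence alone.
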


\begin{proof} The assertion for $t^C_0(\Bbbk)$ is obvious.
The assertion for $t^C_1(\Bbbk)$ follows from Lemma \ref{xxlem3.2}(3).
Now let $ j \geq 2$ and let $F_j = jD_i - t^B_2(\Bbbk)$. It remains to show
that $t^C_j(\Bbbk) \leq F_j$. We proceed by induction on $j$.

Applying \eqref{E5.1.3} to the map $f: A\to C$, 
and using the fact that $C\otimes_A \Bbbk=\Bbbk$, we obtain
that
\begin{equation}
\label{E5.8.1}\tag{E5.8.1}
t^C_j(\Bbbk)\leq \max\left\{ \left\{t^C_k(\Bbbk)
+t^A_{j-k-1}(C_A)\right\}_{0\leq k\leq j-2}, 
\; \; t^A_j(\Bbbk)\right\}.
\end{equation}
Note that when $2 \leq j \leq i$, hypothesis (d) on $D_i$ is equivalent to
\[
t^A_j (\Bbbk) \leq j D_i -t^B_2(\Bbbk)=F_j.
\]
Hence, to show the main assertion it suffices to show that for all 
$0 \leq k \leq j- 2$,
\begin{equation}
\label{E5.8.2}\tag{E5.8.2}
t^C_k(\Bbbk)+t^A_{j - k - 1}(C_A) \leq F_j.
\end{equation}
Note that \eqref{E5.8.2}
holds for $k=0$ because 
\begin{equation}
\label{E5.8.3}\tag{E5.8.3}
t^C_0(\Bbbk)+t^A_{j-1}(C_A)=t^A_{j-1}(C_A)\leq t^A_{j-1}(B_A)
\leq F_j
\end{equation}
where the first inequality holds because $C$ is a direct summand
of $B$ as a right $A$-module, and the second inequality 
holds by Corollary \ref{xxcor5.5}. Further,
\eqref{E5.8.2} holds for $k=1$ because
$$t^C_1(\Bbbk)+t^A_{j-2}(C_A)
\leq D_i + F_{j-1}= F_{j}.$$
If $2\leq k \leq j-2$, we use the induction hypothesis and the fact
$t^A_{j-k-1}(C_A)\leq F_{j-k}$ (as explained in \eqref{E5.8.3}) to see that
$$
t^C_k(\Bbbk)+t^A_{j-k-1}(C_A)\leq F_k +F_{j-k}<F_j.$$
Therefore \eqref{E5.8.2} holds for all $k \leq j-2$,
as desired.
\end{proof}

\begin{theorem}
\label{xxthm5.9}
Retain the hypotheses of Theorem \ref{xxthm5.6}. Fix a positive 
integer $i$ and assume that
\[\deg T/J_{\infty} \geq \max\left\{\frac{t^S_j(\Bbbk)
+t^T_2(\Bbbk)}{j}\right\}_{2\leq j\leq i} - 2.\]
Then $t^R_0(\Bbbk)=0$, 
$t^R_{1}(\Bbbk)\leq \deg T/J_{\infty}+1$, and, for 
$2 \leq j \leq i$, 
$$t^R_{j}(\Bbbk)\leq j(\deg T/J_{\infty} + 2) - 2.$$
\end{theorem}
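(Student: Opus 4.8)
The plan is to deduce Theorem~\ref{xxthm5.9} as a direct application of Proposition~\ref{xxpro5.8}. First I would take $A=S$ and $B=T$, and let $f\colon S\to T$ be the composite of the given graded surjection $S\twoheadrightarrow R=T^H$ with the inclusion $T^H\hookrightarrow T$. Since $S\to R$ is surjective, the image of $f$ is exactly $C=R$, so the conclusions of Proposition~\ref{xxpro5.8} about $t^C_j(\Bbbk)$ become statements about $t^R_j(\Bbbk)$. For the regularity parameter I would take $D_i=\deg T/J_{\infty}+2$, matching the value used in the proof of Theorem~\ref{xxthm5.6}.

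The bulk of the work is checking the four hypotheses of Proposition~\ref{xxpro5.8} for this data. Hypothesis (a), that $B=T$ is generated in degree $1$, is immediate since $T$ is Koszul. Hypothesis (b), that $C=R$ is a right $S$-module direct summand of $T$, follows from semisimplicity of $H$: the Reynolds (normalized-integral) operator $\pi\colon T\to T^H$ is a $T^H$-bimodule projection, and because the right $S$-action on $T$ factors through $f$ (hence through $R=T^H$), $\pi$ is right $S$-linear; thus $T=R\oplus\ker\pi$ as right $S$-modules. This is the same splitting invoked at the end of the proof of Theorem~\ref{xxthm5.6}. Hypothesis (c) asks that $D_i$ be an admissible choice in Corollary~\ref{xxcor5.5} for $(A,B)=(S,T)$, and I expect this verification to be the main (though routine) obstacle: one uses that $T$ Koszul gives $t^T_j(\Bbbk)=j$ for $0\le j\le\gldim T$ (so $t^T_2(\Bbbk)=2$), the inclusion $J_{\infty}\subseteq J_{\le i}$ to bound $\deg T/J_{\le i}+t^T_2(\Bbbk)\le\deg T/J_{\infty}+2$, the hypothesis $t^S_j(\Bbbk)\le j(\deg T/J_{\infty}+2)$ from Theorem~\ref{xxthm5.6} to bound $t^S_j(\Bbbk)/j$, and $\deg T/J_{\infty}\ge 0$ to handle the remaining terms. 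This is precisely the computation already carried out in the proof of Theorem~\ref{xxthm5.6}, so it may simply be cited. Finally, hypothesis (d) is $D_i\ge\max\{(t^S_j(\Bbbk)+t^T_2(\Bbbk))/j\}_{2\le j\le i}$, which upon substituting $D_i=\deg T/J_{\infty}+2$ is literally the standing assumption of Theorem~\ref{xxthm5.9}.

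With all hypotheses in place, I would read off the conclusions of Proposition~\ref{xxpro5.8} and translate them using $t^T_2(\Bbbk)=2$. The first gives $t^R_0(\Bbbk)=0$; the second gives $t^R_1(\Bbbk)\le D_i-t^T_2(\Bbbk)+1=\deg T/J_{\infty}+1$; and for $2\le j\le i$ the third gives $t^R_j(\Bbbk)\le jD_i-t^T_2(\Bbbk)=j(\deg T/J_{\infty}+2)-2$, which are exactly the three asserted bounds. The only genuinely substantive point is the admissibility check in hypothesis (c), and since it duplicates the argument of Theorem~\ref{xxthm5.6}, the proof reduces to bookkeeping once the identifications $A=S$, $B=T$, $C=R$, $D_i=\deg T/J_{\infty}+2$ are made.
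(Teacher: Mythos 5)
Your proposal is correct and follows exactly the paper's own route: the paper also proves this theorem by applying Proposition~\ref{xxpro5.8} with $(A,B,C)=(S,T,R)$ and $D_i=\deg T/J_{\infty}+2$, noting that $t^T_2(\Bbbk)=2$ by Koszulity, that hypothesis (d) is precisely the standing assumption, and that the admissibility check for Corollary~\ref{xxcor5.5} repeats the computation from Theorem~\ref{xxthm5.6}. Your write-up merely makes explicit the verifications (the Reynolds-operator splitting for hypothesis (b), the inclusion $J_{\infty}\subseteq J_{\leq i}$ for hypothesis (c)) that the paper compresses into ``one can check.''
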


\begin{proof}
Note that since $T$ is Koszul, $t^T_2(\Bbbk)=2$. Letting 
$(A,B)=(S,T)$, observe that Hypothesis (d) in Proposition 
\ref{xxpro5.8} holds for $D_i = \deg T/J_{\infty} + 2$. 

Under these hypotheses, one can check that
$\deg T/J_{\infty} + 2$ is at least equal to each term in 
the $\max$-expression in Corollary \ref{xxcor5.5} and that
the hypotheses in Proposition \ref{xxpro5.8} hold.
Therefore the assertion follows from Proposition \ref{xxpro5.8}.
\end{proof}

The commutative result \cite[Theorem 1.3(part 1)]{CS} 
is covered by the above theorem. We now give a very 
special case of Theorem \ref{xxthm5.9}.

\begin{corollary}
\label{xxcor5.10}
Let $G$ be a finite group acting as graded automorphisms on 
$T:=\Bbbk_{-1}[x_1, \dots, x_n]$, and suppose that $\Bbbk$ is 
an infinite field and $\Bbbk G$ is semisimple. Assume that 
$R:=T^G$ is commutative.  Then
$t^{R}_0(\Bbbk)=0$, $t^R_1(\Bbbk)\leq 2|G|+n$, and 
$$t^{R}_i(\Bbbk)\leq i (2|G|+n+1)-2$$
for all $i\geq 2$.
\end{corollary}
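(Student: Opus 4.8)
The plan is to obtain this as an instance of Theorem~\ref{xxthm5.9} applied with $(A,B)=(S,T)$, where $T=\Bbbk_{-1}[x_1,\dots,x_n]$ and $H=\Bbbk G$. First I would check the standing hypotheses. Since $\Bbbk G$ is semisimple, $|G|$ is invertible in $\Bbbk$, and $T$ is a Koszul noetherian AS regular domain with Hilbert series $1/(1-t)^n$; for $n\geq 2$ it has injective dimension $n\geq 2$, so $(T,\Bbbk G)$ satisfies Hypothesis~\ref{xxhyp0.3} and $T$ is Koszul, as Theorem~\ref{xxthm5.6} requires (the case $n=1$ is trivial, as then $R$ is a polynomial ring in one variable and all the bounds hold directly). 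Because $R:=T^G$ is assumed commutative and is an affine graded algebra, I would take $S$ to be the commutative weighted polynomial ring $\Bbbk[y_1,\dots,y_m]$ on a minimal homogeneous generating set $y_1,\dots,y_m$ of $R$, with $\deg y_k=e_k$; the tautological map $S\to R$ is a graded algebra surjection and $S$ is noetherian AS regular. Since $S$ is a commutative polynomial ring, its minimal free resolution of $\Bbbk$ is the Koszul complex, so $t^S_j(\Bbbk)$ equals the sum of the $j$ largest $e_k$; in particular $t^S_j(\Bbbk)\leq j\max_k e_k=j\,\beta(R)$.

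Next I would bound $\deg T/J_\infty$ and $\beta(R)$. The algebra $T$ is module-finite over the central polynomial subalgebra $Z=\Bbbk[x_1^2,\dots,x_n^2]$, which is generated in degree $2$ and over which $T$ is free on the squarefree monomials (of degree $\leq n$); moreover every graded automorphism of $T$ carries $Z$ into itself (as noted before Corollary~\ref{xxcor3.12}), so $Z$ is $G$-stable. Applying Proposition~\ref{xxpro3.11}(2) with $d=2$, $m=n$, and $H=\Bbbk G$ a group algebra gives
$$\deg T/J_\infty\leq 2|G|+n-1,$$
whence $\deg T/J_\infty+2\leq 2|G|+n+1$. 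To control $\beta(R)$ against $\deg T/J_\infty$, recall from Definition~\ref{xxdef1.1} and the remark following it that $J_{H,0}(T)\subseteq J_H(T)$ with $\deg T/J_{H,0}=\deg T/J_H=\tau_H(T)-1$, while $J_\infty\subseteq J_{H,0}$ forces $\deg T/J_\infty\geq\deg T/J_{H,0}$. Combining this with Corollary~\ref{xxcor3.3} yields the clean inequality
$$\beta(R)\leq\tau_H(T)=\deg T/J_{H,0}+1\leq\deg T/J_\infty+1.$$

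Finally I would verify the $\Tor$-degree hypotheses and conclude. Writing $D=\deg T/J_\infty+2$, the estimate $t^S_j(\Bbbk)\leq j\,\beta(R)\leq j(D-1)$ gives $t^S_j(\Bbbk)\leq jD$ for all $j\geq 0$, which is the hypothesis of Theorem~\ref{xxthm5.6}; and since $jD-j\leq jD-2$ for $j\geq 2$, it gives $t^S_j(\Bbbk)\leq jD-2=j(\deg T/J_\infty+2)-2$ for $2\leq j\leq i$. As $t^T_2(\Bbbk)=2$ for the Koszul algebra $T$, this is exactly the extra hypothesis of Theorem~\ref{xxthm5.9}. Because these inequalities hold for every $i$, Theorem~\ref{xxthm5.9} applies for all $i$ and yields $t^R_0(\Bbbk)=0$, $t^R_1(\Bbbk)\leq\deg T/J_\infty+1\leq 2|G|+n$, and $t^R_j(\Bbbk)\leq j(\deg T/J_\infty+2)-2\leq j(2|G|+n+1)-2$ for all $j\geq 2$, as claimed. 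The substantive points are the construction of $S$ with good $\Tor$-degrees --- which succeeds precisely because commutativity of $R$ lets one take $S$ to be a polynomial ring and read off $t^S_j(\Bbbk)$ from the Koszul complex --- and the two bounds on $\deg T/J_\infty$ coming from the central subalgebra $Z$. I expect the only delicate parts to be the verification that $Z$ is $G$-stable and the bookkeeping relating $\beta(R)$, $\tau_H(T)$, and $\deg T/J_\infty$.
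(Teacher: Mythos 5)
Your proposal is correct and takes essentially the same route as the paper: the same choice of $S$ (a weighted polynomial ring on a minimal generating set of $R$, exploiting commutativity of $R$), the same bound $\deg T/J_{\infty}\leq 2|G|+n-1$ obtained from Proposition~\ref{xxpro3.11}(2) via the $G$-stable central subring $Z=\Bbbk[x_1^2,\dots,x_n^2]$ with $d=2$, $m=n$, and the same Chardin--Symonds machinery --- you invoke Theorem~\ref{xxthm5.9}, whereas the paper applies Proposition~\ref{xxpro5.8} and Corollary~\ref{xxcor5.5} directly with $D_i=2|G|+n+1$, but Theorem~\ref{xxthm5.9} is precisely the paper's packaging of those results. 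The only minor divergence is that routing through Theorem~\ref{xxthm5.9} forces you to treat $n=1$ separately (since it inherits Hypothesis~\ref{xxhyp0.3}'s requirement that the injective dimension be at least two), a case the paper's direct application of Proposition~\ref{xxpro5.8} avoids; your handling of that case, and your bookkeeping $\beta(R)\leq\tau_H(T)\leq\deg T/J_{\infty}+1$ and $t^S_j(\Bbbk)\leq j\beta(R)$ via the Koszul complex, are correct and match the checks the paper labels ``routine.''
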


\begin{proof} It is clear that $t^{R}_0(\Bbbk)=0$. By 
Corollary \ref{xxcor3.12}, $t^R_1(R)\leq 2|G|+n$. 

When $i\geq 2$, we let $S$ be a commutative 
polynomial ring generated by elements of degree 
$\leq \beta(R)$ which maps surjectively onto $R$.

Since $T:=\Bbbk_{-1}[x_1, \dots, x_n]$ is a finite module 
over the commutative subalgebra $Z:= \Bbbk[x_1^2, \dots, x_n^2]$ 
and is generated as a $Z$-module by elements of degree $\leq n$,
we have $d=2$ and $m=n$ in Proposition \ref{xxpro3.11}. 
By Proposition \ref{xxpro3.11}(2),
$\deg T/J_{G,i}\leq 2|G|+n-1$. Let $D=D_i=2|G|+n+1$
(which is independent of $i$). 
Then by Proposition \ref{xxpro3.11}(1), 
$\beta(R)\leq \tau_{G}(T)\leq 2|G|+n=D-1$. Now it is routine 
to check that $D(=D_i)$ is at least
equal to each term in the max-expressions in Corollary \ref{xxcor5.5}
and Proposition \ref{xxpro5.8}(d).
By Proposition \ref{xxpro5.8} with $(B,A,C)=(T,S,R)$, we obtain that
$$t^R_i(\Bbbk)\leq i(2|G|+n+1)-2$$
as $t^T_2(\Bbbk)=2$.
\end{proof}

To conclude this section we prove a version of \cite[Theorem 2]{De}. 

\begin{theorem}
\label{xxthm5.11}
Let $(T,H)$ be as in Hypothesis \ref{xxhyp0.3} and suppose that $T$ is AS regular.
Let $R=T^H$. Suppose further that
\begin{enumerate}
\item[(a)]
$T$ is generated in degree 1.
\item[(b)]
$S$ is a noetherian AS regular algebra such that the minimal generating 
vector spaces of $S$ and $R$ have the same dimension
and there exists a graded algebra surjection $S \to R$. 
\end{enumerate}
Then we can conclude:
\begin{enumerate}
\item[(1)]
We have 
$$\begin{aligned}
\beta_2(R):=t^R_2(\Bbbk) &\leq \tau_H(T)+\tau^{\op}_H(T)-\CMreg(T)\\
&\leq 2-2\CMreg(S)+\CMreg(T).
\end{aligned}$$
\item[(2)]
Suppose that $\Tor^S_1(\Bbbk, R)\otimes_R \Bbbk \cong 
\Tor^S_1(\Bbbk, R)$. 
Then 
$$\begin{aligned}
t^S_1(_S R)&\leq \tau_H(T)+\tau^{\op}_H(T)-\CMreg(T)\\
&\leq 2-2\CMreg(S)+\CMreg(T).
\end{aligned}$$
\item[(3)]
Suppose the hypothesis of part {\rm{(2)}}.
Let $K$ be the kernel of the algebra map $S\to R$. Then, as a left
ideal of $A$, $K$ is generated in degree at most
$$\tau_H(T)+\tau^{\op}_H(T)-\CMreg(T)\leq 2-2\CMreg(S)+\CMreg(T).$$
\end{enumerate}
\end{theorem}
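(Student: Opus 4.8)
The plan is to prove part (1) first—the bound on $t^R_2(\Bbbk)$—and then to read off parts (2) and (3) from it, since under the extra hypothesis the latter two are essentially reformulations of (1) through the change-of-rings sequence. Throughout I would write $g\colon S\to T$ for the composite $S\twoheadrightarrow R\hookrightarrow T$, so that $K=\ker(S\to R)=\ker g$ is a two-sided ideal of $S$. Two structural facts should be recorded at the outset. First, since $S\to R$ is surjective, $TS_{\geq 1}=TR_{\geq 1}=J_H(T)$ and $S_{\geq 1}T=R_{\geq 1}T=J^{\op}_H(T)$, so by Definition~\ref{xxdef1.1},
$$\tau_H(T)=t^S_0(T_S)+1,\qquad \tau^{\op}_H(T)=t^S_0({}_ST)+1.$$
Second, because $H$ is semisimple the Reynolds operator $T\to R$ splits the inclusion $R\hookrightarrow T$ as a map of $R$-bimodules, hence $R$ is a direct summand of $T$ both as a left and as a right $S$-module, giving $t^S_i(R_S)\leq t^S_i(T_S)$ and $t^S_i({}_SR)\leq t^S_i({}_ST)$. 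I would also note that hypothesis (b) supplies a common minimal generating set for $S$ and $R$ in the sense needed for Lemma~\ref{xxlem5.1}(2),(3): a surjection inducing an isomorphism on $\fm/\fm^2$ carries a minimal generating set to a minimal generating set.

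For part (1), apply the five-term change-of-rings sequence of Lemma~\ref{xxlem5.1}(2) with $(A,B)=(S,R)$ to get $\Tor^S_2(\Bbbk,\Bbbk)\to\Tor^R_2(\Bbbk,\Bbbk)\to \Bbbk\otimes_R\Tor^S_1(R_S,\Bbbk)\to 0$, whence
$$t^R_2(\Bbbk)\leq \max\{t^S_2(\Bbbk),\ \deg(\Bbbk\otimes_R\Tor^S_1(R_S,\Bbbk))\}\leq \max\{t^S_2(\Bbbk),\ t^S_1(T_S)\},$$
the last step using that $\Bbbk\otimes_R(-)$ is a quotient together with the summand estimate $t^S_1(R_S)\leq t^S_1(T_S)$. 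This reduces part (1) to the two sharp estimates
$$t^S_2(\Bbbk)\leq \tau_H(T)+\tau^{\op}_H(T)-\CMreg(T)\qquad\text{and}\qquad t^S_1(T_S)\leq \tau_H(T)+\tau^{\op}_H(T)-\CMreg(T).$$

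The heart of the argument—and the step I expect to be the main obstacle—is these two bounds. The naive regularity estimate from Theorem~\ref{xxthm2.13}(3), namely $t^S_1(T_S)\leq\Torreg(T_S)+1=\CMreg(T)-\CMreg(S)+1$ and $t^S_2(\Bbbk)\leq 2-\CMreg(S)$, yields only the weaker right-hand bound and in fact overshoots the sharp one, so a finer input is required. The plan is to exploit that $T$ is AS Gorenstein and is $d$-Cohen--Macaulay as an $S$-module (where $d=\injdim T$), so that local duality over $S$, using the balanced dualizing complex of $S$, identifies the $\Bbbk$-dual of the minimal free resolution of $T_S$—up to homological reversal and a twist governed by $\CMreg(T)$—with the minimal free resolution of ${}_ST$. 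Under this duality the generator degrees of the two one-sided resolutions are exactly $t^S_0(T_S)=\tau_H(T)-1$ and $t^S_0({}_ST)=\tau^{\op}_H(T)-1$, and the bookkeeping of the dualizing twist should convert these two extreme degrees into control of the interior syzygy $t^S_1(T_S)$ as well as of the relations $t^S_2(\Bbbk)$ of $S$ (the latter because $S$ and $R$ share a generating set, so the relations of $S$ are constrained by how the generators of $R$ interact inside $T$). Making the twist computation precise—tracking the AS indices and Nakayama automorphisms of $S$ and $T$ and checking that the two saturation degrees combine additively against the single shift $-\CMreg(T)$—is the delicate point, and is the noncommutative analogue of \cite[Theorem~2]{De}. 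The second inequality in each part is then formal: Lemma~\ref{xxlem2.16}(3) gives $\tau_H(T)\leq\CMreg(T)-\CMreg(S)+1$ and $\tau^{\op}_H(T)\leq\CMreg(T)-\CMreg(S)+1$, so $\tau_H(T)+\tau^{\op}_H(T)-\CMreg(T)\leq 2-2\CMreg(S)+\CMreg(T)$.

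For parts (2) and (3), the extra hypothesis $\Tor^S_1(\Bbbk,R)\otimes_R\Bbbk\cong\Tor^S_1(\Bbbk,R)$ is exactly the condition permitting the opposite-sided version of Lemma~\ref{xxlem5.1}(3), which produces $\Tor^S_2(\Bbbk,\Bbbk)\to\Tor^R_2(\Bbbk,\Bbbk)\to\Tor^S_1(\Bbbk,R)\to 0$. Surjectivity of the second map gives $t^S_1({}_SR)=\deg\Tor^S_1(\Bbbk,R)\leq\deg\Tor^R_2(\Bbbk,\Bbbk)=t^R_2(\Bbbk)$, and combining with part (1) yields the bound of (2). Finally, for part (3) the short exact sequence $0\to K\to S\to R\to 0$ of left $S$-modules, together with the freeness of $S$, gives $\Tor^S_1(\Bbbk,R)\cong \Bbbk\otimes_S K=K/S_{\geq 1}K$; hence $t^S_1({}_SR)$ is precisely the top degree of a minimal generating set of $K$ as a left ideal of $S$, so the bound of (2) is literally the generation bound asserted in (3).
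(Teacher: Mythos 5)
Your reduction of part (1) is where the proposal breaks down. From the five-term sequence you correctly get $t^R_2(\Bbbk)\leq \max\{t^S_2(\Bbbk),\, t^S_1(T_S)\}$, but you then need \emph{both} terms to be at most $\tau_H(T)+\tau^{\op}_H(T)-\CMreg(T)$, and your claimed bound on $t^S_2(\Bbbk)$ is not merely unproven (you defer it to a ``delicate'' local-duality computation that is never carried out) --- it is false under the stated hypotheses. The relations of $S$ are simply not controlled by $(T,H)$: hypothesis (b) only pins down the generators of $S$ and asks for a surjection onto $R$. Concretely, let $H$ act trivially on $T=\Bbbk[x_1,x_2]$, so that $R=T$, $\tau_H(T)=\tau^{\op}_H(T)=1$, $\CMreg(T)=0$, and the target bound is $2$; and let $S=U({\mathfrak h})$ be the enveloping algebra of the graded Heisenberg Lie algebra ($\deg x=\deg y=1$, $z=[x,y]$ central of degree $2$), which is a noetherian AS regular algebra of dimension $3$, generated in degree $1$ by a $2$-dimensional space, surjecting onto $R$ by abelianization. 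Its defining relations $[x,[x,y]]$ and $[y,[x,y]]$ have degree $3$, so $t^S_2(\Bbbk)=3>2$. (The theorem's conclusion is of course still true here, $\beta_2(R)=2$; it is your intermediate claim, and hence your route, that fails.) In this example your $\max$ evaluates to $3$, strictly above the asserted sharp bound, so no amount of bookkeeping of dualizing twists over $S$ can rescue the reduction: a correct proof of the first inequality in (1) cannot bound $t^R_2(\Bbbk)$ by data attached to $S$.

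This is exactly how the paper avoids the problem: its proof of the first inequality in (1) never uses $S$. It works over $T$, forming the syzygy module $U\subseteq\bigoplus_j T(-d_j)$ of a minimal generating set $\{f_j\}$ of $R$ (with $d_j\leq \tau_H(T)$ by Lemma~\ref{xxlem3.2}), bounding $\CMreg(U)\leq\tau_H(T)+1$ via Lemma~\ref{xxlem2.14} applied to $0\to U\to\bigoplus_j T(-d_j)\to J_H(T)\to 0$, and converting this to the statement that $U$ is generated in degrees $\leq\tau_H(T)+1-\CMreg(T)$ using Theorem~\ref{xxthm2.13}(3) (valid because $T$ is AS regular). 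The step your duality sketch was meant to replace is then elementary and two-sided: any element of $U$ of degree $>\tau_H(T)+\tau^{\op}_H(T)-\CMreg(T)$ is a sum $\sum_i p_iq_i$ with $q_i$ among those low-degree generators and $p_i\in T$ of degree $\geq\tau^{\op}_H(T)$, hence $p_i\in J^{\op}:=R_{\geq 1}T$, so the element lies in $J^{\op}U$. Taking $H$-invariants with the integral identifies $(U/J^{\op}U)^H$ with $M/\fm M\cong\Tor^R_2(\Bbbk,\Bbbk)$, where $M=U^H$, giving the sharp bound; $S$ enters only through Lemma~\ref{xxlem2.16} to obtain the second, weaker inequality. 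Your treatments of parts (2) and (3) --- deducing (2) from (1) via \eqref{E5.1.1} and reading (3) off from $\Tor^S_1(\Bbbk,R)\cong K/S_{\geq 1}K$ --- do agree with the paper, but they inherit the gap in part (1).
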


Recall that if $T$ is AS regular, then, by Example \ref{xxex2.10}(3),
$\CMreg(T)\leq 0$. 
The condition that $\Tor^S_1(\Bbbk, R)\otimes_R \Bbbk \cong 
\Tor^S_1(\Bbbk, R)$ is automatic when $R$ and $S$ are 
commutative. It holds even for some noncommutative cases;
see, for example, Lemma \ref{xxlem5.12}.

\begin{proof}[Proof of Theorem \ref{xxthm5.11}]
(1) Let $\{f_1,\cdots, f_r\}$ be a set of homogeneous
elements in $R$ that generates $R$ minimally; these
are also elements in $T$. Let $d_i=\deg (f_i)$ 
for all $1 \leq i \leq r$. 

We consider the left $T$-module $U$ defined by 
\begin{equation}
\label{E5.11.1}\tag{E5.11.1}
U:=\left\{(w_1,\cdots,w_r) \in T(-d_1)\oplus \cdots \oplus T(-d_r)
\;\;\; \middle| \; \;\; \sum_{i=1}^w w_i f_i=0\right\}.
\end{equation}
Then $U$ fits into the short exact sequence of graded left $T$-modules,
\begin{equation}
\label{E5.11.2}\tag{E5.11.2}
0 \to U\to \bigoplus_{j=1}^r T(-d_j)\to TR_{\geq 1}(=: J_H(T))\to 0.
\end{equation}
By Lemma \ref{xxlem3.2}(3), for all $ 1 \leq j \leq r$, 
$d_j\leq \tau_{H}(T)$. Applying Lemma \ref{xxlem2.14}(1) to the 
exact sequence
$$0\to J_H(T)\to T \to T/J_H(T)\to 0,$$
and using the fact that $\CMreg(T)\leq 0$, we obtain that
$$\begin{aligned}
\CMreg(J_H(T)) &\leq \max\{\CMreg(T),\CMreg(T/J_H(T))+1\}\\
&=\CMreg(T/J_H(T))+1 =\tau_H(T).
\end{aligned}
$$
Applying Lemma \ref{xxlem2.14}(1) to \eqref{E5.11.2} 
and using the fact that each $d_j\leq \tau_{H}(T)$
(or equivalently, $\CMreg(T(-d_j))\leq \tau_{H}(T)$), we have 
$\CMreg(U)\leq \tau_H(T)+1$. By Theorem \ref{xxthm2.13}(3),
we have
$$\Extreg(J_H(T))=\CMreg(J_H(T))-\CMreg(T)\leq \tau_H(T)-\CMreg(T)$$ 
and 
$$\Extreg(U)=\CMreg(U)-\CMreg(T)\leq \tau_H(T)+1-\CMreg(T).$$ 
Since Ext-regularity is equal to Tor-regularity
[Definition \ref{xxdef2.11}], $U$ is generated in degrees 
$\leq \tau_H(T)+1-\CMreg(T)$ as a left $T$-module,
or
\begin{equation}
\label{E5.11.3}\tag{E5.11.3}
U=\sum_{\lambda\leq  \tau_H(T)+1-\CMreg(T)} T U_{\lambda}.
\end{equation}

There is an induced $H$-action on the 
left $T$-module $\bigoplus_{j=1}^r T(-d_j)$ that makes
\eqref{E5.11.2} a short exact sequence of left $H$-modules.
Consider the left $R$-module $M$ defined by
$$M:= \left\{(w_1,\cdots,w_r) \in R(-d_1)\oplus \cdots \oplus R(-d_r)
\;\; \middle| \; \; \sum_{i=1}^w w_i f_i=0\right\}.$$
Since the $f_i$ are $H$-invariants, \eqref{E5.11.2} is an exact 
sequence of $H$-equivariant $T$-modules, so we can apply 
$(-)^H$. Since $H$ is semisimple, $(-)^H$ 
is an exact functor. Then the following exact sequence follows 
from \eqref{E5.11.2}.
\begin{equation}
\label{E5.11.4}\tag{E5.11.4}
0\to M\to \bigoplus_{j=1}^r R(-d_j)\to R_{\geq 1}\to 0.
\end{equation}
Thus $M$ fits into the short exact sequence,
$$0\to M\to \bigoplus_{j=1}^r R(-d_j)\to R \to \Bbbk\to 0.$$
Let $\fm=R_{\geq 1}$. We can identify $M/\fm M$ with 
$\Tor^R_2(\Bbbk, \Bbbk)$. 

Now we consider $M$ as an $R$-submodule of $U$. Let $J^{\op}$ be 
$R_{\geq 1} T$. Then $J^{\op}=\sum_{j} f_j T=\fm T$. 
Since $H$ is semisimple, applying
$(-)^H$ to the exact sequence
$$0\to J^{\op} U \to U \to U/J^{\op} U\to 0$$
we obtain an exact sequence
\begin{equation}
\label{E5.11.5}\tag{E5.11.5}
0\to (J^{\op} U)^H \to U^H \to (U/J^{\op} U)^H\to 0.
\end{equation}
We have already seen that $U^H=M$. We claim that 
$(J^{\op} U)^H=\fm M$. Let $\phi\in (J^{\op}U)^H$,
which can be written as 
$$\phi=\sum_{j} f_j u_j$$
for some $u_j\in U$. Let $e$ be the integral of $H$.
Then 
$$\phi=e\cdot \phi=
\sum_j f_j (e\cdot u_j)\in \fm M.$$
Now \eqref{E5.11.5} shows that $(U/J^{\op} U)^H\cong M/\fm M$. 

Next we claim that $\deg (U/J^{\op}U)\leq \tau_H(T)+\tau^{\op}_H(T)
-\CMreg(T)$. Each $h\in U$ of degree strictly 
larger than $\tau_H(T)+\tau^{\op}_H(T)-\CMreg(T)$ can be written as
$$h=\sum_i p_i q_i,$$
where $q_i\in U_{\lambda}$ with $\lambda\leq \tau_H(T)+1-\CMreg(T)$
and $p_i\in T_{d}$ with $d\geq \tau^{\op}_H(T)$. By the definition
of $\tau^{\op}_H(T)$, we have $p_i\in J^{\op}$. Hence
$h\in J^{\op}U$. Therefore we proved the claim. Since 
$M/\fm M$ is a subspace of $U/J^{\op}U$, we obtain
that $\deg (M/fm M)\leq \tau_H(T)+\tau^{\op}_H(T)-\CMreg(T)$
or that $\deg(\Tor^R_2(\Bbbk,\Bbbk))\leq \tau_H(T)+\tau^{\op}_H(T)-\CMreg(T)$.
The second inequality follows from Lemma \ref{xxlem2.16}.

(2) The assertion follows from Lemma \ref{xxlem5.1}(3) 
(or \eqref{E5.1.1}) and part (1).

(3)  By the exact sequence $0\to K\to S \to R\to 0$, 
$K$ is generated by elements corresponding to 
$\Tor^S_1(\Bbbk,R)$. Then the assertion follows 
from part (2).
\end{proof}

\begin{lemma}
\label{xxlem5.12}
Suppose that $S$ and $R$ are connected graded algebras and 
$f: S \to R$ is a graded algebra surjection with kernel $K$. 
If $K$ is generated by normal elements in $S$,
then $\Tor^S_1(\Bbbk, R)\otimes_R \Bbbk \cong \Tor^S_1(\Bbbk, R)$.
As a consequence, \eqref{E5.1.1} holds.
\end{lemma}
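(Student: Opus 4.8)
The plan is to reduce the stated isomorphism to the vanishing of the right $R$-action on $\Tor^S_1(\Bbbk, R)$, and then to extract that vanishing from the normality of the generators of $K$. Since $\Bbbk = R/R_{\geq 1}$ as a left $R$-module, we have $\Tor^S_1(\Bbbk, R)\otimes_R \Bbbk = \Tor^S_1(\Bbbk, R)/\Tor^S_1(\Bbbk, R)\,R_{\geq 1}$, and the natural quotient map is an isomorphism precisely when $\Tor^S_1(\Bbbk, R)\,R_{\geq 1} = 0$. Here $\Tor^S_1(\Bbbk, R)$ carries the right $R$-module structure coming from the right multiplication of $R$ on the bimodule ${}_S R_R$. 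Thus the whole lemma comes down to showing that this right action is trivial.

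First I would identify $\Tor^S_1(\Bbbk, R)$ explicitly. Applying $\Tor^S_\bullet(\Bbbk, -)$ to the short exact sequence $0 \to K \to S \to R \to 0$ of $(S,S)$-bimodules (with $R = S/K$ given its quotient bimodule structure), and using $\Tor^S_1(\Bbbk, S) = 0$ together with the fact that $\Tor^S_0(\Bbbk, S) = \Bbbk \xrightarrow{\ \cong\ } \Bbbk = \Tor^S_0(\Bbbk, R)$ (both equal $\Bbbk$ in degree $0$, and $f$ induces the identity), the connecting homomorphism gives an isomorphism of right $S$-modules $\Tor^S_1(\Bbbk, R) \cong \Tor^S_0(\Bbbk, K) = K/S_{\geq 1}K$. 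Because $f$ is a graded surjection, $R_{\geq 1} = f(S_{\geq 1})$, so the right $R$-action on $\Tor^S_1(\Bbbk, R)$ factors through the right $S$-action obtained by restriction along $f$; under the isomorphism above it becomes the action $\overline{\kappa}\cdot s = \overline{\kappa s}$ on $K/S_{\geq 1}K$. Consequently $\Tor^S_1(\Bbbk, R)\,R_{\geq 1} = 0$ is equivalent to the inclusion $K S_{\geq 1} \subseteq S_{\geq 1} K$. The main obstacle is precisely this sidedness bookkeeping: one must be sure the right $R$-structure is the one transported from $K/S_{\geq 1}K$, and that the content-bearing inclusion is $K S_{\geq 1}\subseteq S_{\geq 1}K$ rather than the automatic but useless $K S_{\geq 1}\subseteq K$.

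The decisive step, where the hypothesis enters, is therefore $K S_{\geq 1} \subseteq S_{\geq 1} K$. Write $K$ as generated by homogeneous normal elements $z_1, \dots, z_m$, each of positive degree since $K \subseteq S_{\geq 1}$. Normality gives $z_i S = S z_i$, so the two-sided ideal they generate is already the left ideal $K = \sum_i S z_i$. For homogeneous $s \in S_{\geq 1}$, normality forces $z_i s = t z_i$ with $t$ homogeneous of degree $\deg s \geq 1$, hence $t \in S_{\geq 1}$; this graded refinement $z_i S_{\geq 1} \subseteq S_{\geq 1} z_i$ is exactly what is needed (mere normality only yields $z_i S_{\geq 1}\subseteq S z_i$). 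Then $K S_{\geq 1} = \sum_i S(z_i S_{\geq 1}) \subseteq \sum_i S(S_{\geq 1} z_i) = \sum_i S_{\geq 1} z_i \subseteq S_{\geq 1} K$, completing the argument and proving $\Tor^S_1(\Bbbk, R)\otimes_R \Bbbk \cong \Tor^S_1(\Bbbk, R)$. Finally, the concluding sentence follows by feeding this isomorphism into Lemma~\ref{xxlem5.1}(3), which then yields the exact sequence \eqref{E5.1.1}.
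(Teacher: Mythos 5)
Your proof is correct and takes essentially the same route as the paper's: the paper simply asserts that the isomorphism is equivalent to the inclusion $K S_{\geq 1} \subseteq S_{\geq 1} K$ and then performs the identical computation with normal generators ($w_i S_{\geq 1} = S_{\geq 1} w_i$, hence $K S_{\geq 1} = S_{\geq 1} K$). The only difference is expository: you supply the justifications the paper leaves implicit, namely the identification $\Tor^S_1(\Bbbk,R)\cong K/S_{\geq 1}K$ via the long exact sequence for $0\to K\to S\to R\to 0$, and the degree argument showing that for a homogeneous normal generator the twist preserves degrees, so that $z_i S_{\geq 1}\subseteq S_{\geq 1}z_i$.
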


\begin{proof} Let $K$ be the kernel of the surjective map.
Then 
$\Tor^S_1(\Bbbk, R)\otimes_R \Bbbk \cong \Tor^S_1(\Bbbk, R)$
is equivalent to $S_{\geq 1} K\supseteq K S_{\geq 1}$. 

Write $K=\sum_i w_i S=\sum_i S w_i$ for a set of normal elements
$\{w_i\}\subseteq S$. Then, for each $i$, $w_i S_{\geq 1}
=S_{\geq 1} w_i$. Therefore 
$$K S_{\geq 1}=\sum_i S w_i S_{\geq 1}=\sum_i S S_{\geq 1} w_i
=\sum_i S_{\geq 1} S w_w=S_{\geq 1} K$$
as desired.
\end{proof}

\section{Further questions}
\label{xxsec6}

We conclude by posing some further questions on degree bounds. 
Suppose that $A$ is a noetherian connected graded algebra and 
$H$ is a semisimple Hopf algebra acting homogeneously on $A$. 
Recall the definition of the $\tau$-saturation degree 
$\tau_H(A)$ [Definition~\ref{xxdef1.1}(2)].

\begin{question}
\label{xxque6.1}
Under what hypothesis is $\tau_H(A)=\tau^{\op}_H(A)$?
\end{question}

In Corollary~\ref{xxcor3.3}, we showed that an upper bound 
on $\tau_H(A)$ provides an upper bound for $\beta(A^H)$, the 
maximum degree of a minimal generating set of $A^H$. In 
Example~\ref{xxex3.4}, Theorem~\ref{xxthm3.5}, and 
Proposition~\ref{xxpro3.11}, we were able to compute bounds 
on $\tau_H(A)$. 

\begin{question}
\label{xxque6.2} 
For which $A$ and $H$ can we bound $\tau_H(A)$?
\end{question}

If one is able to bound $\beta(A^H)$, there also remains 
the question of the sharpness of the bound. Noether's bound 
is sharp in the non-modular case. However, in \cite{DH}, 
Domokos and Heged\"{u}s show that if $T$ is a commutative 
polynomial ring over a field of characteristic zero and $G$ 
is a finite group that is not cyclic, then there is a strict 
inequality $\beta(T^G) < |G|$.

\begin{question}
\label{xxque6.3}
Is there a version of Domokos and Heged\"{u}s's result for 
skew polynomial rings (or other AS regular algebras) under 
group (or Hopf) actions?
\end{question}

We saw in Example~\ref{xxex3.6} that for a group $G$, there 
is no universal bound on $\beta(T^G)$ over all AS regular 
algebras that depends only upon the order of the group and 
the degree of the representation, even for a group of order 2. 
This is in contrast to the commutative case. However, we pose 
the following question.

\begin{question}
\label{xxque6.4}
Let $T:=\Bbbk[V]$ be a commutative polynomial ring over a 
field of characteristic zero and $V$ a representation of a 
finite group $G$. Define 
$$\beta(G,V) :=\min\{d: \Bbbk[V]^G 
\text{ generated by elements of degree } \leq d\}$$
$$\beta(G) = \max\{ \beta(G,V): V 
\text{ is a finite representation of } G \}.$$
It is a theorem of Weyl that $\beta(G) = \beta(G,V_{\rm reg})$. 

Is there a version of this result, i.e., a particular 
representation of $G$ or $H$, which has the highest degrees 
of minimal generating invariants for particular families 
of AS regular algebras? 
For example, 
one could fix $A = \Bbbk_{-1}[x_1, \dots, x_n]$.
Then if $|G| = n$, the regular representation
of $G$ induces an action on $A$
and \cite[Theorem 2.5]{KKZ6} gives a bound on 
$\beta(A^G)$.
Does this give a bound for all 
actions of $G$ on $(-1)$-skew polynomial rings?
\end{question}

In this paper, we have focused on actions by semisimple Hopf algebras.
The group algebra $\Bbbk G$ is semisimple precisely in the 
non-modular case (i.e., when the characteristic of $\Bbbk$ does 
not divide $|G|$). Hence, as noted in the introduction,
when $T = \Bbbk[x_1, \dots, x_n]$ the bounds on $\beta(T^G)$ 
depend on whether or not $\Bbbk G$ is semisimple.

\begin{question}If $H$ is a non-semisimple Hopf algebra acting
on a connected graded noetherian AS Gorenstein algebra $T$, 
what bounds can be established on $\beta(T^H)$?
\end{question}

We refer the reader to \cite{CWZ} for examples of non-semisimple 
Hopf algebra actions on AS regular algebras.

\subsection*{Acknowledgments} We thank the referee for carefully
reading the manuscript and making several helpful suggestions.
This work was begun while E. Kirkman
was a visitor at the University of Washington Department of Mathematics;  
she thanks the department for its gracious hospitality.
R. Won was partially supported by an AMS--Simons Travel Grant.
J. J. Zhang was partially supported by the US National Science 
Foundation (Nos. DMS-1700825 and DMS-2001015).

\end{document}